\newtheorem{lem}{Lemma}[section]
\newtheorem{prop}{Proposition}[section]
\newtheorem{thm}{Theorem}[section]
\theoremstyle{definition}
\theoremstyle{remark}
\newtheorem{remark}{Remark}[section]
\numberwithin{equation}{section}
\newcommand{\N}{{\mathbb N}}
\newcommand{\R}{{\mathbb R}}
\newcommand{\eps}{{\varepsilon}}
\newcommand{\st}{\,:\,}
\newcommand{\dif}{\,\mathrm{d}}
\newcommand{\abs}[1]{\lvert #1 \rvert}
\newcommand{\bigabs}[1]{\bigl\lvert #1 \bigr\rvert}
\newcommand{\norm}[1]{\lVert #1 \rVert}
\newcommand{\bignorm}[1]{\bigl\lVert #1 \bigr\rVert}
\begin{document}

\title[Sharp Gagliardo-Nirenberg inequalities]{Sharp Gagliardo--Nirenberg inequalities\\ in fractional Coulomb--Sobolev spaces}
\author[J.\ Bellazzini]{Jacopo Bellazzini}
\address[J.\ Bellazzini]{Universit\`a di Sassari\\
Via Piandanna 4\\ 07100 Sassari\\ Italy}
\email{jbellazzini@uniss.it}

\author[M.\ Ghimenti]{Marco Ghimenti}
\address[M.\ Ghimenti]{Universit\`a di Pisa\\
Dipartimento di Matematica\\
Largo B. Pontecorvo 5\\
56100 Pisa\\
Italy}
\email{Marco.Ghimenti@dma.unipi.it}

\author[C.\ Mercuri]{Carlo Mercuri}
\address[C.\ Mercuri]{Swansea University\\ Department of Mathematics\\ Singleton Park\\
Swansea\\ SA2~8PP\\ Wales, United Kingdom}
\email{C.Mercuri@swansea.ac.uk}

\author[V.\ Moroz]{Vitaly Moroz}
\address[V.\ Moroz]{Swansea University\\ Department of Mathematics\\ Singleton Park\\
Swansea\\ SA2~8PP\\ Wales, United Kingdom}
\email{V.Moroz@swansea.ac.uk}

\author[J.\ Van Schaftingen]{Jean Van Schaftingen}
\address[J.\ Van Schaftingen]{Universit\'e Catholique de Louvain\\
Institut de Recherche en Math\'ematique et Phy\-sique\\
Chemin du Cyclotron 2 bte L7.01.01\\
1348 Louvain-la-Neuve \\
Belgium}
\email{Jean.VanSchaftingen@UCLouvain.be}

\keywords{Interpolation inequalities, fractional Sobolev inequality; Coulomb energy; Riesz potential; radial symmetry}

\subjclass[2010]{46E35 (39B62, 35Q55)}

\date{\today}

\begin{abstract}
We prove scaling invariant Gagliardo-Nirenberg type inequalities of the form $$\|\varphi\|_{L^p(\mathbb{R}^d)}\le C\|\varphi\|_{\dot H^{s}(\mathbb{R}^d)}^{\beta} \left(\iint_{\mathbb{R}^d \times \mathbb{R}^d} \frac{|\varphi (x)|^q\,|\varphi (y)|^q}{|x - y|^{d-\alpha}} \dif x \dif y\right)^{\gamma},$$ involving fractional Sobolev norms with $s>0$ and Coulomb type energies with $0<\alpha<d$ and $q\ge 1$. We establish optimal ranges of parameters for the validity of such inequalities and discuss the existence of the optimisers. In the special case $p=\frac{2d}{d-2s}$ our results include a new refinement of the fractional Sobolev inequality by a Coulomb term. We also prove that if the radial symmetry is taken into account, then the ranges of validity of the inequalities could be extended and such a radial improvement is possible if and only if $\alpha>1$.
\end{abstract}

\maketitle

\tableofcontents

\section{Introduction and statement of results}
\settocdepth{section}

\subsection{Introduction.}
Given \(d \in \N\), \(s > 0\), \(\alpha \in (0, d)\) and \(q \in [1, \infty)\),
we define the fractional Coulomb--Sobolev space by
\[
  \mathcal{E}^{s, \alpha, q} (\R^d)
  =\Bigl\{
  \varphi : \R^d \to \R \st
  \iint_{\R^d\times \R^d}
\frac{\abs{\varphi (x)}^q\,\abs{\varphi (y)}^q}{\abs{x - y}^{d-\alpha}} \dif x \dif y <\infty
\text{ and }
\int_{\R^d} \bigabs{\abs{\xi}^{s} \widehat{\varphi} (\xi)}^2  \dif \xi < \infty
\Bigr\}.
\]
Since for every measurable function \(\varphi : \R^d \to \R\)
\begin{equation}
\label{boundSupBall}
 \Bigl(\int_{B_R(0)} \abs{\varphi}^q \dif x\Bigr)^2
 \le C R^{d - \alpha} \iint_{\R^d\times \R^d}
\frac{\abs{\varphi (x)}^q\,\abs{\varphi (y)}^q}{\abs{x - y}^{d-\alpha}} \dif x \dif y,
\end{equation}
the boundedness of the double integral on the right-hand side of \eqref{boundSupBall} ensures
that \(\varphi\) is a tempered distribution and that its Fourier transform
\(\widehat{\varphi}\) is a well-defined tempered distribution.
In particular \(\abs{\xi}^s \widehat{\varphi}\) is a well-defined distribution on
\(\R^d \setminus \{0\}\). The integrability condition in the definition of $\mathcal{E}^{s, \alpha, q} (\R^d)$ means that this distribution can be represented by an \(L^2\)--function.

In the sequel we define the fractional Laplacian \((-\Delta)^\frac{s}{2} \varphi\) by
\[
  (\widehat {(-\Delta)^\frac{s}{2} \varphi}) (\xi)
  = \bigl(2 \pi \abs{\xi}^2\bigr)^\frac{s}{2} \widehat{\varphi} (\xi).
\]
We endow the space \(\mathcal{E}^{s, \alpha, q} (\R^d)\) with the norm
\[
  \norm{\varphi}_{\mathcal{E}^{s, \alpha, q} (\R^d)}
  =\Biggl(\bignorm{(-\Delta)^\frac{s}{2} \varphi}^2_{L^2 (\R^d)}
  +\biggl(\ \iint_{\R^d\times \R^d}
\frac{\abs{\varphi (x)}^q\,\abs{\varphi (y)}^q}{\abs{x - y}^{d-\alpha}} \dif x \dif y\biggr)^{\frac{1}{q}}
\Biggr)^{\frac{1}{2}}.
\]
In particular, when \(s < \frac{d}{2}\), a function \(\varphi\) is in the space
\(\mathcal{E}^{s, \alpha, q} (\R^d)\) if and only if \(\varphi \in \dot{H}^s (\R^d)\)
and
\[
  \iint_{\R^d\times \R^d}
\frac{\abs{\varphi (x)}^q\,\abs{\varphi (y)}^q}{\abs{x - y}^{d-\alpha}} \dif x \dif y < \infty.
\]
Following the arguments in \cite{MMVS}*{Section 2}, the space
\(\mathcal{E}^{s, \alpha, q} (\R^d)\) is a Banach space (see Proposition~\ref{propositionComplete} below).

The space \(\mathcal{E}^{s, \alpha, q} (\R^d)\) is the natural domain for the fractional Coulomb--Dirichlet type energy
\begin{equation*}\label{e-CD-energy}
  \bignorm{(-\Delta)^\frac{s}{2} \varphi}^2_{L^2 (\R^d)}+\iint_{\R^d\times \R^d}
\frac{\abs{\varphi (x)}^q\,\abs{\varphi (y)}^q}{\abs{x - y}^{d-\alpha}} \dif x \dif y,
\end{equation*}
which appears in models of mathematical physics related to multi-particle systems.
Typically, the Coulomb term with $q=2$ represents the electrostatic repulsion between the particles.
Relevant models include Thomas--Fermi--Dirac--von\thinspace{}Weizs\"acker (TFDW) models of Density Functional theory \citelist{\cite{Lieb81}\cite{BenguriaBrezisLieb}\cite{LeBris-Lions-2005}}; or Schr\"odinger--Poisson--Slater approximation to Hartree--Fock theory \cite{Catto-2013}.
Nonquadratic ($q\neq 2$) Coulombic energies appear in a possible zero mass limit of the relativistic Thomas--Fermi--von\thinspace{}Weizsacker (TFW) energy, see \citelist{\cite{BPO-2002}\cite{BLS-2008}} where $d=3$, $s=1$, $\alpha=2$, $q=3$; or \cite{BGT-2012}*{Section 2} where $d=2$, $s=1$, $\alpha=1$, $q=4$.
The fractional case $s=1/2$ occurs in the ultra-relativistic models, cf. \citelist{\cite{Lieb-Yau}\cite{LiebSeiringer2010}}.
In particular, $d=2$, $s=1/2$ and $\alpha=1$ appears in the recent TFDW theory of charge screening in graphene \cite{LMM-2015}, where relevant powers are $q=2$ or $q=1$.
Interpolation inequalities \eqref{e-main} associated with the space \(\mathcal{E}^{s, 2s, 2} (\R^d)\) are in some cases equivalent to the Lieb--Thirring type inequalities \cite{Lundholm-2016}*{Theorem 3}, which are fundamental
in the study of stability of non-relativistic ($s=1$) and ultra-relativistic ($s=1/2$) matter \cite{LiebSeiringer2010}.

Mathematically, the space $\mathcal E^{1,2,2}(\R^3)$ has been introduced and studied by P.-L.\thinspace{}Lions \citelist{\cite{Lions-1981}*{Lemma 4}\cite{Lions1987}*{(55)}} and in D.\thinspace{}Ruiz \cite{Ruiz-ARMA}*{section 2}. In particular, P.-L.\thinspace{}Lions established a Coulomb-Sobolev interpolation inequality
\begin{equation}\label{E122}
\|\varphi\|_{L^3(\R^3)}\le C\|\nabla \varphi\|_{L^2(\R^3)}^{1/2}\biggl(\ \iint_{\R^3 \times \R^3}
\frac{\abs{\varphi (x)}^2\,\abs{\varphi (y)}^2}{\abs{x - y}} \dif x \dif y\biggl)^{1/2},
\end{equation}
which holds for all $\varphi \in \mathcal E^{1,2,2}(\R^3)$.
Lions' proof relies on the quadratic structure of the nonlocal term ($q=2$) and the special relation $\alpha=2s$ and cannot be extended beyond these restrictions.
Coulomb--Sobolev inequalities in the fractional space \(\mathcal{E}^{s, \alpha, 2} (\R^d)\) had been studied in \citelist{\cite{BOV}\cite{BFV}} using methods of fractional calculus, while the non-quadratic case \(\mathcal{E}^{1, \alpha, q} (\R^d)\) had been introduced and studied in \cite{MMVS} using Morrey type estimates.

We emphasize that unlike the classical Hardy--Littlewood--Sobolev inequality,
Coulomb--Sobolev inequality is a {\em lower} bound on the nonlocal Coulomb energy.
In particular, \eqref{E122} ensures the continuous embedding $\mathcal E^{1,2,2}(\R^3)\subset L^3(\R^3)\cap L^6(\R^3)$.
D.\thinspace{}Ruiz in \cite{Ruiz-ARMA}*{Theorem 1.2} observed that if the radial symmetry is taken into account,
then the ranges of validity of the Coulomb--Sobolev inequalities could be extended. As a consequence,
he established an improved embedding $\mathcal E_{\mathrm{rad}}^{1,2,2}(\R^3)\subset L^p(\R^3)\cap L^6(\R^3)$, for any $p>18/7$.
In \cite{MMVS} the radial improvement was extended to \(\mathcal{E}_{\mathrm{rad}}^{1, \alpha, q} (\R^d)\) with any $\alpha>1$.
It was also shown that no radial improvement occurs when $\alpha\le 1$.
In \cite{BGO}, the radial improvement was obtained in \(\mathcal{E}_{\mathrm{rad}}^{s, 2, 2} (\R^3)\) for $1/2<s<3/2$.
The result however did not include the physically important ultra-relativistic case $s=1/2$.
Technically, this was related to the failure of pointwise Strauss type estimates on the radial functions in fractional Sobolev spaces of order $s\le 1/2$.

The aim of the present paper is threefold:
\begin{itemize}
\item
We extend Coulomb--Sobolev inequalities associated to the space \(\mathcal{E}^{s, \alpha, q} (\R^d)\) to arbitrary $s>0$ and $q\ge 1$,
thus completing the studies in  \cite{BFV} ($q=2$) and \cite{MMVS} ($s=1$). Our proof is different from the proofs in \citelist{\cite{BFV}\cite{MMVS}}.
It is based only on the standard fractional Gagliardo-Nirenberg inequality and a fractional chain rule.
\item
We analyze a family of refined Sobolev inequalities, which appear as a special end point case of the interpolation inequalities in $\mathcal{E}^{s, \alpha, \frac{d+\alpha}{d-2s}}(\R^d)$. For some values of parameters we establish the existence of optimizers to the refined Sobolev inequalities.
The existence of the optimisers is new even in the previously studied case $s=1$.
\item
We obtain a radial improvement of Coulomb-Sobolev inequalities in the space \(\mathcal{E}_{\mathrm{rad}}^{s, \alpha, q} (\R^d)\) of radially symmetric functions
for the complete range $s>0$, $q\ge 1$, $\alpha>1$. This includes, in particular, previously open case $s\le 1/2$.
We also show that a radial improvement is possible if and only if $\alpha>1$, so $\alpha=1$ is a universal critical constant which does not depend on any other parameter.
In addition, we observe that $q=\big(\frac{2}{1-2s}\big)_+$ plays a special role as the only value of $q$ where the radial embedding interval is closed.
\end{itemize}
All of our results are essentially sharp, which is demonstrated by a range of counterexamples confirming optimality.

\subsection{Coulomb--Sobolev inequalities.}
Our first main result in this paper is the continuous embedding
$$\mathcal{E}^{s, \alpha, q} (\R^d) \hookrightarrow L^\frac{2(2q s+\alpha)}{2s+\alpha}(\R^d).$$
More specifically, we establish a family of scaling--invariant interpolation inequalities for the space $\mathcal{E}^{s, \alpha, p}(\R^d)$.

\begin{thm}[Coulomb--Sobolev inequalities]\label{maint}
Let $d\in\N$, $s > 0$, $0<\alpha<d$, $q,p\in[1,\infty)$ and $q(d-2s)\neq d+\alpha$.
There exists a constant $C=C(d,s,\alpha,q,p)>0$ such that the scaling invariant inequality
\begin{equation}\label{e-main}
  \Vert \varphi \Vert _{p}
 \le C\Vert \varphi\Vert _{\dot H^{s}(\R^d)}^{\frac{p(d+\alpha)-2dq}{p(d+\alpha-q(d-2s))}}
    \biggl(\ \iint_{\R^d \times \R^d}
\frac{\abs{\varphi (x)}^q\,\abs{\varphi (y)}^q}{\abs{x - y}^{d-\alpha}} \dif x \dif y\biggl)^{\frac{2d-p(d-2s)}{2p(d+\alpha-q(d-2s))}}
\end{equation}
holds for every function $\varphi\in \mathcal{E}^{s, \alpha, q}(\R^d)$ if and only if
\begin{align}
  p &\ge \frac{2(2qs+\alpha)}{2s+\alpha}
    && \text{if } s \ge \frac{d}{2},\label{e-main-0}\\
  p&\in\Big[ \frac{2(2 q s+\alpha)}{2s+\alpha}, \frac{2d}{d-2s}\Big]
    &&\text{if } s < \frac{d}{2} \quad \text{ and } \quad \frac{1}{q}>\frac{d-2s}{d+\alpha},\label{e-main-1}\\
  p&\in\Big[\frac{2d}{d-2s}, \frac{2(2qs +\alpha)}{2s+\alpha}\Big]
    &&\text{if } s < \frac{d}{2} \quad \text{ and } \quad  \frac{1}{q}<\frac{d-2s}{d+\alpha}.\label{e-main-2}
\end{align}
Moreover, if $p$ is not an end--point of the intervals \eqref{e-main-0}--\eqref{e-main-2}, i.e. $p\neq \frac{2(2qs+\alpha)}{2s+\alpha}$ and $p\neq \frac{2d}{d-2s}$,
then the best constant  for \eqref{e-main} is achieved.
\end{thm}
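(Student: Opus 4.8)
The plan is to \emph{decouple} the two factors on the right of \eqref{e-main} by passing to the function $\psi:=\abs{\varphi}^q$, for which the Coulomb energy is literally a homogeneous Sobolev norm of negative order:
\[
  \iint_{\R^d\times\R^d}\frac{\abs{\varphi(x)}^q\,\abs{\varphi(y)}^q}{\abs{x-y}^{d-\alpha}}\dif x\dif y
  =c_{d,\alpha}\bignorm{\abs{\varphi}^q}_{\dot H^{-\alpha/2}(\R^d)}^2,
\]
since the Riesz kernel $\abs{x}^{-(d-\alpha)}$ has Fourier transform a constant times $\abs{\xi}^{-\alpha}$. Scaling and the homogeneity $\varphi\mapsto\mu\varphi$ already pin down the two exponents in \eqref{e-main} to the written values (equivalently, $\beta+2q\gamma=1$ and the dilation relation), so for the sufficiency it suffices to produce the inequality for \emph{one} admissible intermediate space. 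I would combine three standard tools: (a) the fractional Gagliardo--Nirenberg interpolation inequality $\norm{\psi}_{L^{p/q}}\le C\norm{\psi}_{\dot W^{\sigma,b}}^{\theta}\norm{\psi}_{\dot H^{-\alpha/2}}^{1-\theta}$ for a suitable triple $(\sigma,b,\theta)$ (with a negative endpoint, which follows from a Littlewood--Paley decomposition); (b) a fractional chain rule $\bignorm{\abs{\varphi}^q}_{\dot W^{\sigma,b}}\le C\norm{\varphi}_{L^{(q-1)b_1}}^{q-1}\norm{\varphi}_{\dot W^{\sigma,b_2}}$ with $\tfrac1b=\tfrac1{b_1}+\tfrac1{b_2}$ (for $q=1$ just $\bignorm{\abs{\varphi}}_{\dot W^{\sigma,b}}\le\norm{\varphi}_{\dot W^{\sigma,b}}$); and (c) the Sobolev embedding $\dot H^s\hookrightarrow\dot W^{\sigma,b_2}$. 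Taking $(q-1)b_1=p$ these assemble into $\norm{\varphi}_p^{q}=\bignorm{\abs{\varphi}^q}_{L^{p/q}}\le C\,\norm{\varphi}_p^{(q-1)\theta}\norm{\varphi}_{\dot H^s}^{\theta}(\text{Coulomb})^{(1-\theta)/2}$, and since $q-(q-1)\theta\ge 1>0$ one absorbs $\norm{\varphi}_p$ (legitimate first on a dense class where $\norm{\varphi}_p$ is finite a priori, then extended to $\mathcal{E}^{s,\alpha,q}(\R^d)$ by density and Fatou); when $s<\tfrac d2$ one may instead take $(q-1)b_1=\tfrac{2d}{d-2s}$ and apply the Sobolev inequality directly, avoiding the absorption. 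The real work is the bookkeeping: checking that the constraints $\theta\in(0,1]$, $1<b,b_1,b_2<\infty$, $b_2\ge 2$, $0<\sigma<1$, together with the single scaling relation linking $(\sigma,b,\theta)$, admit a solution \emph{precisely} when $p$ lies in \eqref{e-main-0}--\eqref{e-main-2}. This is the main obstacle, and its delicate sub-cases are $s\ge\tfrac d2$ (no critical Sobolev exponent is available) and large $s$ (a higher-order chain rule, or a preliminary interpolation lowering the differentiability order, is needed).

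For the necessity ("only if") I would falsify \eqref{e-main} outside the stated range by explicit admissible test functions; as \eqref{e-main} is invariant under both dilations and scalar multiplication, single rescaled profiles carry no information, so three genuinely different families are used. First, a profile with an $\abs{x}^{-(d-2s)/2}$-type singularity at the origin, regularised by a logarithmic factor so that it sits exactly in $\dot H^s(\R^d)$ while still failing to be in $L^p$ for $p>\tfrac{2d}{d-2s}$, rules out large $p$ in \eqref{e-main-1}. Second, a profile with a matching $\abs{x}^{-(d-2s)/2}$ tail at infinity, which belongs to $\mathcal{E}^{s,\alpha,q}(\R^d)$ exactly because $q(d-2s)>d+\alpha$ in \eqref{e-main-2} (this is where that decay makes the Coulomb integral converge), rules out $p<\tfrac{2d}{d-2s}$ there. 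Third, a lattice-type superposition $\sum_{i=1}^N\varphi_0(\cdot-a_i)$ with the $N$ points spread over a ball of radius comparable to $N^{1/(d-\alpha)}$ — just enough separation to make both the $\dot H^s$ cross-terms and the long-range Coulomb cross-terms negligible — whose three quantities behave like $N^{1/p}$, $N^{1/2}$ and $N$; inserting this into \eqref{e-main} forces $\tfrac1p\le\tfrac\beta2+\gamma$, an equality precisely at $p=\tfrac{2(2qs+\alpha)}{2s+\alpha}$, and I would record the elementary sign of $\tfrac{d}{dp}\bigl(\tfrac1p+(q-1)\gamma\bigr)$ to see on which side of that endpoint the counterexample bites (one checks it reduces to $2qs+\alpha>0$).

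For the attainment statement I would run a concentration--compactness argument. The hypothesis $q(d-2s)\ne d+\alpha$ is exactly the nonvanishing of the Jacobian $2(d+\alpha-q(d-2s))$ of the action of the two-parameter symmetry group, so a maximising sequence can be normalised by $\norm{\varphi_n}_{\dot H^s}=1$ and $\iint\frac{\abs{\varphi_n(x)}^q\abs{\varphi_n(y)}^q}{\abs{x-y}^{d-\alpha}}\dif x\dif y=1$, whence $\norm{\varphi_n}_p\to C$, the best constant; the endpoint inequality \eqref{e-main} at $p=\tfrac{2(2qs+\alpha)}{2s+\alpha}$ gives a uniform bound there, and interpolating with $\dot H^s$ yields uniform bounds in every $L^r$ between the two extreme exponents. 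Vanishing is excluded because $\norm{\cdot}_p$ with $p$ in the interior is a non-endpoint interpolant of those bounds, so a vanishing sequence would satisfy $\norm{\varphi_n}_p\to 0$. Dichotomy is excluded by strict super-additivity: were $\varphi_n$ to split into widely separated pieces with $\dot H^s$-weights $a_i$ and Coulomb-weights $D_i$ (each summing to $1$), applying \eqref{e-main} to the pieces would give $C^p\le C^p\sum_i a_i^{p\beta/2}D_i^{p\gamma}$, whereas Hölder together with $p\bigl(\tfrac\beta2+\gamma\bigr)>1$ — which one checks holds exactly when $p$ lies strictly on the attainment side of the endpoint $\tfrac{2(2qs+\alpha)}{2s+\alpha}$ — forces $\sum_i a_i^{p\beta/2}D_i^{p\gamma}<1$, a contradiction. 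Hence, after a translation, a subsequence converges weakly in $\dot H^s$ and strongly in $L^p$ to some $\varphi\ne 0$; weak lower semicontinuity of $\norm{\cdot}_{\dot H^s}$ and Fatou for the Riesz double integral give $\norm{\varphi}_{\dot H^s}\le1$ and Coulomb energy $\le1$, while $\norm{\varphi}_p=C$, so $\varphi$ is an optimiser.
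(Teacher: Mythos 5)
Your proof of the inequality itself is essentially the paper's: both convert the Coulomb term into $\bignorm{\abs{\varphi}^q}_{\dot H^{-\alpha/2}(\R^d)}^2$ via the Riesz identity, apply a fractional Gagliardo--Nirenberg inequality with the negative-order norm as one endpoint (the paper writes this as \eqref{eq:GN} applied to $\psi=(-\Delta)^{-\alpha/4}\abs{\varphi}^q$, which is your step (a) with $\sigma=s$, $b=r$, $\theta=\tfrac{\alpha}{\alpha+2s}$), invoke the fractional chain rule with the splitting $\tfrac1r=\tfrac12+\tfrac1l$, choose $(q-1)l=qp$ so the $L^{qp}$-norm can be absorbed, and then reach all other admissible $p$ by Lebesgue interpolation against the Sobolev embedding followed by optimisation in the dilation parameter. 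Two remarks on the remaining parts. For the necessity, the paper uses only a one-directional multibump $\sum_k\eta(\cdot+ka)$ with $\abs{a}\to\infty$, which (like your lattice spread over a ball of radius comparable to $N^{1/(d-\alpha)}$) yields exactly the constraint $\tfrac1p\le\tfrac\beta2+\gamma$, i.e.\ the endpoint $\tfrac{2(2qs+\alpha)}{2s+\alpha}$; your additional singular-at-the-origin and slowly-decaying profiles are genuinely needed to exclude $p$ beyond the Sobolev exponent $\tfrac{2d}{d-2s}$ in \eqref{e-main-1}--\eqref{e-main-2}, a point the paper leaves implicit, so your necessity argument is if anything more complete. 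For the attainment, your concentration--compactness scheme (normalisation of the two denominators, exclusion of vanishing by non-endpoint interpolation, exclusion of dichotomy by strict superadditivity coming from $p(\tfrac\beta2+\gamma)>1$) is the same mechanism the paper implements through the $pqr$-lemma, Lieb's compactness lemma and the nonlocal Brezis--Lieb splitting. The one place you defer rather than decide --- whether the fractional chain rule for $\abs{\varphi}^q$ is available at order $s$ when $s$ is large and $q$ is small --- is precisely where the paper, too, simply cites Gatto's chain rule without further comment; so this is not a gap relative to the paper's own level of detail, but it is the step you would need to pin down (for instance via the preliminary reduction of the differentiability order that you mention) to make the argument self-contained for all $s>0$.
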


In the case $s=1$ inequality \eqref{e-main} was known for \(d = 3\), \(\alpha = 2\) and \(q = 2\) \citelist{\cite{Lions1987}*{(55)}\cite{Ruiz-ARMA}*{Theorem 1.5}};
and for $d\in\N$, $\alpha\in(0,N)$ and $q\ge 1$ \cite{MMVS}*{Theorem 1}.
The fractional inequality \eqref{e-main} first appeared for $d=3$, $s=1/2$, $\alpha=2$ and $q=2$ in \cite{BOV}*{Proposition 2.1};
and for $d\in\N$, $s>0$, $\alpha\in(0,d)$ and $q=2$ in \cite{BFV}*{Proposition 2.1}.

\subsection{Refined Sobolev inequalities.}
The special case $q(d-2s)= d+\alpha$, which corresponds to
$p=\frac{2d}{d-2s}$ and $q=\frac{d+\alpha}{d-2s}$, is not covered by
the previous theorem and the exponents in \eqref{e-main} are meaningless. In this special case we
obtain a refinement of the Sobolev embedding, extending the one
observed for $s=1$ \cite{MMVS}*{(1.7)} and for $q=2$
\cite{BFV}*{Proposition 2.1}.

\begin{thm}[Endpoint refined Sobolev inequality]\label{thm:sobimp}
Let $d\in\N$, $0<s<\frac{d}{2}$, $0<\alpha<d$.
Then there exists $C=C(d, s, \alpha)>0$ such that the inequality
\begin{equation}\label{eq:Sobimp}
\Vert \varphi \Vert _{L^\frac{2d }{d-2s}(\R^d)} \leq  C \Vert \varphi\Vert _{\dot H^{s}(\R^d)}^{\frac{\alpha(d-2s)}{d(2s+\alpha)}} \left(\iint_{\R^d \times \R^d}
\frac{\vert \varphi(x)\vert^\frac{d+\alpha}{d-2s} \vert \varphi(y)\vert^\frac{d+\alpha}{d-2s}}{\vert x-y\vert^{d-\alpha}} \dif x \dif y\right)^{\frac{s(d-2s)}{d(2 s +\alpha)}}
\end{equation}
holds for all $\varphi\in \mathcal{E}^{s, \alpha, \frac{d+\alpha}{d-2s}}(\R^d)$.
\end{thm}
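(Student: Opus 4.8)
The plan is to deduce \eqref{eq:Sobimp} from a fractional chain rule together with a standard Gagliardo--Nirenberg interpolation inequality, both applied to the auxiliary function $w = \abs{\varphi}^{q}$ with $q = \frac{d+\alpha}{d-2s}$; note $q>1$ since $\alpha>0$. Two elementary identities fix the exponents. Because $q\cdot\frac{2d}{d+\alpha}=\frac{2d}{d-2s}$, one has $\norm{w}_{L^{r_0}(\R^d)} = \norm{\varphi}_{L^{2d/(d-2s)}(\R^d)}^{q}$ with $r_0 = \frac{2d}{d+\alpha}\in(1,\infty)$; and, writing the Coulomb kernel as a Riesz potential and using Plancherel, the double integral in \eqref{eq:Sobimp} equals $c_{d,\alpha}\norm{w}_{\dot H^{-\alpha/2}(\R^d)}^{2}$ up to a dimensional constant, where $\dot H^{-\alpha/2}$ is the homogeneous Sobolev space of negative order. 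Thus \eqref{eq:Sobimp} is, after these substitutions, a lower bound for $\norm{w}_{\dot H^{-\alpha/2}}$.

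Set $m = \frac{2d}{d+2s+\alpha}$. The first ingredient is the fractional chain rule for the power nonlinearity $t\mapsto\abs{t}^{q}$,
\[
  \norm{w}_{\dot W^{s,m}(\R^d)} = \bignorm{\abs{\varphi}^{q}}_{\dot W^{s,m}(\R^d)} \le C\,\norm{\varphi}_{L^{2d/(d-2s)}(\R^d)}^{q-1}\,\norm{\varphi}_{\dot H^{s}(\R^d)},
\]
which is dimensionally consistent because $\tfrac1m = (q-1)\tfrac{d-2s}{2d}+\tfrac12$ and $\dot H^{s}=\dot W^{s,2}$. The second ingredient is the Gagliardo--Nirenberg inequality
\[
  \norm{w}_{L^{r_0}(\R^d)} \le C\,\norm{w}_{\dot W^{s,m}(\R^d)}^{\theta}\,\norm{w}_{\dot H^{-\alpha/2}(\R^d)}^{1-\theta},\qquad \theta = \frac{\alpha}{2s+\alpha}.
\]
The three spaces $L^{r_0}$, $\dot W^{s,m}$, $\dot H^{-\alpha/2}$ all carry the same dilation homogeneity $-\frac{d+\alpha}{2}$, so applying $(-\Delta)^{-\alpha/4}$ to $w$ turns this into a classical Gagliardo--Nirenberg inequality interpolating $L^{2}$, $\dot W^{s+\alpha/2,m}$ and $\dot W^{\alpha/2,r_0}$ (all of homogeneity $-\frac d2$), which in turn follows from the refined Sobolev embeddings into Lorentz spaces together with Hölder interpolation in the second Lorentz index.

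Feeding the two displays and the two identities into one another gives
\[
  \norm{\varphi}_{L^{2d/(d-2s)}}^{q} \le C\Bigl(\norm{\varphi}_{L^{2d/(d-2s)}}^{q-1}\norm{\varphi}_{\dot H^{s}}\Bigr)^{\theta}\left(\iint_{\R^d\times\R^d}\frac{\abs{\varphi(x)}^{q}\abs{\varphi(y)}^{q}}{\abs{x-y}^{d-\alpha}}\dif x\dif y\right)^{\!\frac{1-\theta}{2}}.
\]
Since $\varphi\in\mathcal{E}^{s,\alpha,q}(\R^d)\subset\dot H^{s}(\R^d)\hookrightarrow L^{2d/(d-2s)}(\R^d)$, the factor $\norm{\varphi}_{L^{2d/(d-2s)}}$ is finite, and since $q-(q-1)\theta=\frac{d}{d-2s}>0$ it may be absorbed on the left-hand side; using $\theta=\frac{\alpha}{2s+\alpha}$ and $\frac{1-\theta}{2}=\frac{s}{2s+\alpha}$ one is left with
\[
  \norm{\varphi}_{L^{2d/(d-2s)}}^{\frac{d}{d-2s}} \le C\,\norm{\varphi}_{\dot H^{s}}^{\frac{\alpha}{2s+\alpha}}\left(\iint_{\R^d\times\R^d}\frac{\abs{\varphi(x)}^{q}\abs{\varphi(y)}^{q}}{\abs{x-y}^{d-\alpha}}\dif x\dif y\right)^{\!\frac{s}{2s+\alpha}},
\]
and raising this to the power $\frac{d-2s}{d}$ reproduces exactly \eqref{eq:Sobimp}.

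The step I expect to be delicate is the fractional chain rule. For $0<s\le1$ it is classical, coming from the Gagliardo seminorm and the pointwise bound $\bigl|\abs{a}^{q}-\abs{b}^{q}\bigr|\le C\bigl(\abs{a}^{q-1}+\abs{b}^{q-1}\bigr)\abs{a-b}$. For $s>1$ the nonlinearity $\abs{t}^{q}$ is only of class $C^{1,\min(q-1,1)}$, so the estimate at order $s$ requires $q$ not too small compared with $s$, and one also has to keep $m$ and the auxiliary Sobolev exponents in $(1,\infty)$, which forces $2s+\alpha<d$; the complementary range is most naturally reached either by a composition estimate built on a Bony paraproduct decomposition of $\abs{\varphi}^{q}$, or by first proving \eqref{eq:Sobimp} for a dense class of $\varphi$ and then recovering it as a limit of Theorem~\ref{maint} along the coordinated degeneration $q\uparrow\frac{d+\alpha}{d-2s}$, $p = \frac{2(2qs+\alpha)}{2s+\alpha}\uparrow\frac{2d}{d-2s}$ (the lower endpoint of \eqref{e-main-1}), for which the exponents in \eqref{e-main} converge to those of \eqref{eq:Sobimp} by the same computation as above.
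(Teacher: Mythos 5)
Your argument is correct and is essentially the paper's own proof: the paper establishes the general endpoint inequality of Theorem~\ref{lem:gen} by writing the Coulomb term as $\bignorm{(-\Delta)^{-\alpha/4}|\varphi|^q}_{L^2}^2$, applying the endpoint Gagliardo--Nirenberg inequality \eqref{eq:GN} to $\psi=(-\Delta)^{-\alpha/4}|\varphi|^q$, and invoking Gatto's fractional chain rule, and Theorem~\ref{thm:sobimp} is then the specialization $q=\frac{d+\alpha}{d-2s}$ of that result --- which is exactly your computation with $w=|\varphi|^q$, $r_0=\frac{2d}{d+\alpha}$, $m=\frac{2d}{d+2s+\alpha}$ and $\theta=\frac{\alpha}{2s+\alpha}$. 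Your closing caveat about the chain rule for $s>1$ is a fair point, but the paper relies on the same citation, so there is no divergence of method.
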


\begin{remark}
It is interesting to compare our refinement for Sobolev embedding with two other improvements.
The G\'erard--Meyer--Oru improvement \citelist{\cite{BCD}*{Theorem 1.43}\cite{Ledoux-2003}} states that
if $0<s<\frac{d}{2}$ and $\theta\in\mathcal S(\R^d)$ is such that $\widehat\theta$ has compact support, has value $1$ near the origin and satisfies $0\leq\widehat\theta\leq 1$, then
\begin{equation}
\label{ineqGerardMeyerOru}
\Vert \varphi \Vert _{L^\frac{2d}{d-2s}(\R^d)}\leq C(d,s,\theta) \Vert  \varphi \Vert _{\dot H^s(\R^d)}^{1 -  \frac{2s}{d}} \left( \sup_{\lambda >0} \lambda^{\frac{d}{2}+s} \Vert \theta(\lambda \,\cdot) \star \varphi\Vert _\infty \right)^{\frac{2s}{d}}\quad\forall \varphi \in \dot H^s(\R^d).
\end{equation}
The Palatucci--Pisante improvement \cite{PP}*{Theorem 1.1} (see also \cite{VanSchaftingen2014}*{(4.2)}) states that if $0<s<\frac{d}{2}$, then
\begin{equation}
\label{eqineqPalPis}
\Vert \varphi\Vert _{L^\frac{2d}{d-2s}(\R^d)}\leq C(d,s) \Vert \varphi\Vert _{\dot H^s(\R^d)}^{1 - \frac{2 s}{d}} \Vert \varphi\Vert _{\mathcal{M}^{1, \frac{d}{2} - s}}^{\frac{2 s}{d}}\qquad\forall \varphi\in \dot H^s(\R^d).
\end{equation}
In the last inequality, the Morrey norm is defined as
$$
  \Vert \varphi \Vert _{\mathcal{M}^{r,\gamma}}:=\sup_{R >0,\, x\in \R^d} R^{\gamma} \Bigl(\fint_{B_R(x)}\vert u\vert^r\Bigr)^\frac{1}{r} ;
$$
one proof of \eqref{eqineqPalPis} relies on \eqref{ineqGerardMeyerOru} and on the observation that
$$
\lambda^{\frac{d}{2}+s} \Vert \theta(\lambda \,\cdot) \star \varphi\Vert _\infty
  \le C \Vert \varphi\Vert _{\mathcal{M}^{1, \frac{d-2s}{2}}}.
$$
In our case we have by H\"older's inequality and monotonicity of the integral
$$
  \biggl(R^{\frac{d}{2} - s} \fint_{B_R(x)}\vert \varphi\vert \biggr)^{\frac{d+\alpha}{d-2s}}
  \leq
R^{\frac{d+\alpha}{2}}  \fint_{B_R(x)}\vert \varphi\vert^{\frac{d+\alpha}{d-2s}} \leq C \left(\ \iint_{\R^d \times \R^d}
\frac{\vert \varphi(x)\vert^{\frac{d+\alpha}{d-2s}} \vert \varphi(y)\vert^{\frac{d+\alpha}{d-2s}}}{\vert x-y\vert^{d-\alpha}} \dif x\dif y\right)^\frac{1}{2}
$$
so that it is clear that Coulomb norm controls the Morrey norm $\mathcal{M}^{1, \frac{d}{2} - s}$.
On the other hand, the exponent $\frac{\alpha(d-2s)}{d(2s+\alpha)} = (1 - \frac{2 s}{d})\frac{1}{1 + 2s/\alpha}$ for $\dot H^s$-norm in our improvement
is always less than the exponent $1 - \frac{2 s}{d}$ for $\dot H^s$-norm in \eqref{ineqGerardMeyerOru} and \eqref{eqineqPalPis}.
This suggests that the inequality \eqref{eq:Sobimp} cannot be derived directly from the already known ones.
\end{remark}

\begin{remark}
The refinement of the Sobolev inequality in Theorem~\ref{thm:sobimp} is sharp.
Indeed, by scaling it can be proved that if a scaling invariant inequality of the following form holds
\begin{equation}\label{eq:Sobbg}
\Vert \varphi \Vert _{L^\frac{2d }{d-2s}(\R^d)} \leq  C(d, s, \alpha) \Vert \varphi\Vert _{\dot H^{s}(\R^d)}^{\beta} \left(\iint_{\R^d \times \R^d}
\frac{\vert \varphi(x)\vert^{\frac{d+\alpha}{d-2s}} \vert \varphi(y)\vert^{\frac{d+\alpha}{d-2s}}}{\vert x-y\vert^{d-\alpha}} \dif x \dif y\right)^{\gamma},
\end{equation}
then the exponents $\gamma$ and $\beta$ are related by the equation
\begin{equation*}\label{eq:condexp}
\frac{d-2s}{2}=\Bigl(\frac{d}{2}-s\Bigr)\beta+(d+\alpha)\gamma
\end{equation*}
On the other hand, estimates \eqref{eq:stimq-m-nonrad}--\eqref{eq:stimcoul-m-nonrad} in the proof of Theorem~\ref{maint} below imply that
\begin{equation*}\label{eq:condexp2}
\frac{d-2s}{2d}\le\frac{\beta}{2}+\gamma.
\end{equation*}
We conclude that $\beta\ge\frac{\alpha(d-2s)}{d(2s+\alpha)}$ is necessary for \eqref{eq:Sobbg} to hold.
\end{remark}

Interpolating between the refined and classical Sobolev inequalities,
we obtain a new family of interpolation inequalities, for which the best constant is achieved.

\begin{thm}[Non-endpoint refined Sobolev inequalities]\label{thm:sobimp-eps}
Let $d\in\N$, $0<s<\frac{d}{2}$, $0<\alpha<d$ and $0<\eps<\frac{s(d-2s)}{d(2s +\alpha)}$.
Then there exists $C=C(d, s, \alpha, \eps)>0$ such that the inequality
\begin{equation}\label{eq:Sobimp-eps}
||\varphi ||_{\frac{2d }{d-2s}} \leq  C\|\varphi\|_{\dot H^{s}(\R^d)}^{\frac{\alpha(d-2s)}{2sd+\alpha d}+\eps\frac{2(\alpha+d)}{d-2s}} \left(\iint_{\R^d \times \R^d}
\frac{|\varphi(x)|^{\frac{d+\alpha}{d-2s}} |\varphi(y)|^\frac{d+\alpha}{d-2s}}{|x-y|^{d-\alpha}} \, \dif x \dif y\right)^{\frac{s(d-2s)}{d(2s +\alpha)}-\eps}
\end{equation}
holds for all $\varphi\in \mathcal{E}^{s, \alpha, \frac{d+\alpha}{d-2s}}(\R^d)$.
Moreover, the best constant  for \eqref{eq:Sobimp-eps} is achieved.
\end{thm}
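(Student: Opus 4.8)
The plan is to obtain \eqref{eq:Sobimp-eps} by interpolation and then to produce an optimiser by a concentration--compactness argument.

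The inequality \eqref{eq:Sobimp-eps} is obtained by interpolating the endpoint estimate \eqref{eq:Sobimp} of Theorem~\ref{thm:sobimp} with the classical fractional Sobolev inequality $\norm{\varphi}_{\frac{2d}{d-2s}} \le C\norm{\varphi}_{\dot H^{s}(\R^d)}$. Writing $\beta_0 = \frac{\alpha(d-2s)}{d(2s+\alpha)}$, $\gamma_0 = \frac{s(d-2s)}{d(2s+\alpha)}$ and $t = 1 - \eps/\gamma_0$, the hypothesis $0<\eps<\gamma_0$ gives $t\in(0,1)$; taking the $t$-th power of \eqref{eq:Sobimp}, the $(1-t)$-th power of the Sobolev inequality and multiplying, the exponents on the right-hand side become $t\beta_0 + (1-t)$ and $t\gamma_0$, which by the identity $\frac{1-\beta_0}{\gamma_0} = \frac{2(d+\alpha)}{d-2s}$ are precisely those in \eqref{eq:Sobimp-eps}. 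This also shows that \eqref{eq:Sobimp-eps} is scaling invariant with finite sharp constant $C_\eps>0$; equivalently $S_\eps := C_\eps^{-1} = \inf\bigl\{\norm{\varphi}_{\dot H^s(\R^d)}^{\beta_\eps}\mathcal D(\varphi)^{\gamma_\eps} : \norm{\varphi}_{\frac{2d}{d-2s}}=1\bigr\}$ is positive and finite, where $\beta_\eps,\gamma_\eps$ are the exponents of \eqref{eq:Sobimp-eps} and $\mathcal D(\varphi)$ denotes the double integral (Coulomb term) in \eqref{eq:Sobimp-eps}, with $q = \frac{d+\alpha}{d-2s}$.

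For the existence of an optimiser I would start from a minimising sequence $(\varphi_n)$ for $S_\eps$ normalised by $\norm{\varphi_n}_{\frac{2d}{d-2s}}=1$, and first show that $\norm{\varphi_n}_{\dot H^s(\R^d)}$ and $\mathcal D(\varphi_n)$ remain in a fixed compact subinterval of $(0,\infty)$. Since the exponent $q = \frac{d+\alpha}{d-2s}$ makes $\frac{2dq}{d+\alpha}=\frac{2d}{d-2s}$, the Hardy--Littlewood--Sobolev inequality gives $\mathcal D(\varphi)\le C\norm{\varphi}_{\frac{2d}{d-2s}}^{2q}$, so $\mathcal D(\varphi_n)$ is bounded above, while the classical Sobolev inequality bounds $\norm{\varphi_n}_{\dot H^s(\R^d)}$ below. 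The only place where $\eps>0$ is genuinely used is the upper bound on $\norm{\varphi_n}_{\dot H^s(\R^d)}$: combining \eqref{eq:Sobimp} in the form $1\le C\norm{\varphi_n}_{\dot H^s(\R^d)}^{\beta_0}\mathcal D(\varphi_n)^{\gamma_0}$ with the boundedness of $\norm{\varphi_n}_{\dot H^s(\R^d)}^{\beta_\eps}\mathcal D(\varphi_n)^{\gamma_\eps}$ yields a bound on $\norm{\varphi_n}_{\dot H^s(\R^d)}^{\eps/\gamma_0}$, which for $\eps>0$ bounds $\norm{\varphi_n}_{\dot H^s(\R^d)}$ from above; then \eqref{eq:Sobimp} once more bounds $\mathcal D(\varphi_n)$ from below. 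Hence $(\varphi_n)$ is bounded in $\mathcal E^{s,\alpha,\frac{d+\alpha}{d-2s}}(\R^d)$.

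The crux is the failure of compactness of the embedding $\mathcal E^{s,\alpha,\frac{d+\alpha}{d-2s}}(\R^d)\hookrightarrow L^{\frac{2d}{d-2s}}(\R^d)$, which is genuinely critical: translations, the $\dot H^s$-critical dilations and amplitude rescalings all leave each of $\norm{\varphi}_{\dot H^s(\R^d)}$, $\mathcal D(\varphi)$ and $\norm{\varphi}_{\frac{2d}{d-2s}}$ invariant, so no normalisation removes the scaling degeneracy. I would treat this with a profile decomposition for $(\varphi_n)$ in $\dot H^s(\R^d)$: after passing to a subsequence, $\varphi_n = \sum_{j\ge 1} g_n^j V^j + R_n$ with asymptotically orthogonal translation--dilation operators $g_n^j$, the Pythagorean splitting $\norm{\varphi_n}_{\dot H^s(\R^d)}^2 = \sum_j\norm{V^j}_{\dot H^s(\R^d)}^2 + \norm{R_n}_{\dot H^s(\R^d)}^2 + o(1)$, and $\norm{R_n}_{\frac{2d}{d-2s}}\to 0$, so that $\sum_j \norm{V^j}_{\frac{2d}{d-2s}}^{\frac{2d}{d-2s}}=\lim_n\norm{\varphi_n}_{\frac{2d}{d-2s}}^{\frac{2d}{d-2s}}=1$. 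The extra ingredient, and the technical heart of the matter, is a Brezis--Lieb type decoupling for the nonlocal term, $\mathcal D(\varphi_n)=\sum_j\mathcal D(V^j)+o(1)$: cross interactions between distinct profiles vanish by the asymptotic orthogonality of the $g_n^j$, and the remainder contributes negligibly because $\mathcal D(R_n)\le C\norm{R_n}_{\frac{2d}{d-2s}}^{2q}\to 0$.

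Writing $a_j = \norm{V^j}_{\frac{2d}{d-2s}}^{\frac{2d}{d-2s}}\in[0,1]$, so $\sum_j a_j=1$, and passing to the limit in the two decompositions (the $\dot H^s$-norm of the remainder only contributing a nonnegative term), one gets $S_\eps \ge \bigl(\sum_j\norm{V^j}_{\dot H^s(\R^d)}^2\bigr)^{\beta_\eps/2}\bigl(\sum_j\mathcal D(V^j)\bigr)^{\gamma_\eps}$. Applying \eqref{eq:Sobimp-eps} to each profile, which gives $\norm{V^j}_{\dot H^s(\R^d)}^{\beta_\eps}\mathcal D(V^j)^{\gamma_\eps}\ge S_\eps\, a_j^{(d-2s)/(2d)}$, together with Hölder's inequality, one is led to $1\ge\bigl(\sum_j a_j^{1/(p\sigma)}\bigr)^{\sigma}$, where $p=\frac{2d}{d-2s}$ and $\sigma=\frac{\beta_\eps}{2}+\gamma_\eps=\frac{d-2s}{2d}+\eps\frac{\alpha+2s}{d-2s}$; here $p\sigma=1+\frac{2d(\alpha+2s)}{(d-2s)^2}\eps>1$ precisely because $\eps>0$. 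As $\frac{1}{p\sigma}<1$ and $\sum_j a_j=1$, the elementary estimate $\sum_j a_j^{1/(p\sigma)}\ge\sum_j a_j=1$ is strict unless a single $a_{j_0}$ equals $1$ and all others vanish, so this must occur; feeding $a_{j_0}=1$ back into the preceding inequalities forces $\lim_n\norm{R_n}_{\dot H^s(\R^d)}=0$ and $\norm{V^{j_0}}_{\dot H^s(\R^d)}^{\beta_\eps}\mathcal D(V^{j_0})^{\gamma_\eps}=S_\eps$ with $\norm{V^{j_0}}_{\frac{2d}{d-2s}}=1$, i.e.\ $V^{j_0}$ realises the best constant in \eqref{eq:Sobimp-eps}. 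The step I expect to be the main obstacle is the nonlocal Brezis--Lieb decoupling, together with the control of the profile--remainder cross interactions, in this scale-degenerate critical regime; the concluding algebraic step is exactly where the hypothesis $0<\eps<\frac{s(d-2s)}{d(2s+\alpha)}$ enters (through $\gamma_\eps>0$ and $p\sigma>1$).
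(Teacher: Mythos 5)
Your derivation of \eqref{eq:Sobimp-eps} by taking the $t$-th power of the endpoint inequality \eqref{eq:Sobimp} and the $(1-t)$-th power of the Sobolev inequality, with $t=1-\eps/\gamma_0$, is exactly the paper's interpolation step, and your exponent bookkeeping (via $\tfrac{1-\beta_0}{\gamma_0}=\tfrac{2(d+\alpha)}{d-2s}$) is correct. For the existence of an optimiser the underlying mechanism is the same as in the paper --- the whole point is that $\eps>0$ makes the sum of the exponents strictly exceed $1$ after raising the inequality to the power $p=\tfrac{2d}{d-2s}$ (your $p\sigma=1+\eps\tfrac{2d(\alpha+2s)}{(d-2s)^2}>1$ is precisely the paper's computation), so that the discrete H\"older/superadditivity step rules out splitting of the $L^p$-mass --- but your implementation of the compactness device is genuinely different. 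The paper normalises $\|\varphi_n\|_{\dot H^s}=1$, uses the G\'erard--Meyer--Oru inequality to produce translations and dilations under which the sequence has a \emph{single} nonzero weak limit $\bar\varphi$, and then applies the two-piece Brezis--Lieb splittings of the three functionals (for the Coulomb term this is \cite{MMVS}*{Proposition 4.7}) together with the discrete H\"older inequality to conclude directly that $\bar\varphi$ is an optimiser; no full profile decomposition and no multi-profile decoupling are needed. Your route, via a complete profile decomposition in $\dot H^s$ and an orthogonal decoupling $\mathcal D(\varphi_n)=\sum_j\mathcal D(V^j)+o(1)$, is correct in outline (the cross terms do vanish: by HLS and scaling one reduces to two profiles with either diverging scale ratio, which produces a factor $\lambda_n^{(\alpha-d)/2}\to0$, or separating cores, which makes the kernel decay), and your algebraic endgame $1\ge\bigl(\sum_j a_j^{1/(p\sigma)}\bigr)^{\sigma}$ with $1/(p\sigma)<1$ is sound; but you correctly identify that the multi-profile decoupling of $\mathcal D$ is the lemma you would actually have to prove, since it is strictly stronger than the two-piece splitting available in the literature, and you would also need the standard $J\to\infty$ diagonal handling of the remainder $R_n^J$ (the remainder does not vanish in $L^{2d/(d-2s)}$ for fixed $J$). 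In short: same strategy, but the paper's single-bubble extraction buys a substantially shorter proof at the price of invoking the G\'erard--Meyer--Oru inequality, whereas your version is more systematic but front-loads the work into the nonlocal decoupling lemma.
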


When $\eps=\frac{s(d-2s)}{d(2s +\alpha)}$ the inequality \eqref{eq:Sobimp-eps} is the classical Sobolev inequality.

The existence of optimizers for the non-endpoint inequality \eqref{eq:Sobimp} provides a partial answer towards the question raised
in the case $s=1$ in \cite{MMVS}*{Section 1.5.5}.
The existence of optimizers for the endpoint inequality \eqref{eq:Sobimp} remains open.

\subsection{Radial improvements.}
We now consider the question of embeddings for \emph{radial functions}.
Since the symmetric decreasing rearrangement \emph{increases} the nonlinear nonlocal
Coulomb energy term, the situation might be more favorable for radial functions.
Our next result shows that for the subspace of radially symmetric functions in the Coulomb--Sobolev space $\mathcal{E}^{s, \alpha, q}_{\mathrm{rad}}(\R^d)$
the intervals \eqref{e-main-0}--\eqref{e-main-2} of the validity of the Coulomb--Sobolev inequality \eqref{e-main}
can be extended provided that $\alpha>1$.

\begin{thm}[Sharp Improvement in the radial case for $\alpha>1$]\label{thm:radialpartial}
Let $d\ge 2$, $s>0$, $1<\alpha<d$, $q,p\in[1,\infty)$, $q(d-2s)\neq d+\alpha$  and
\begin{equation*}\label{e-qRad}
p_{\mathrm{rad}}:=
q+\frac{\bigl((2s-1)q+2\bigr)(d-\alpha)}{2s(d+\alpha-2)+d-\alpha}.
\end{equation*}
There exists a constant $C_{\mathrm{rad}}=C_{\mathrm{rad}}(d,s,\alpha,q,p)>0$ such that the scaling invariant inequality
\begin{equation}\label{e-main-radial}
\Vert \varphi \Vert _{L^p(\R^d)}
  \le C_{\mathrm{rad}}\Vert \varphi\Vert _{\dot H^{s}(\R^d)}^{\frac{p(d+\alpha)-2dq}{p(d+\alpha-q(d-2s))}}
  \Biggl(\ \iint_{\R^d \times \R^d}
\frac{\abs{\varphi (x)}^q\,\abs{\varphi (y)}^q}{\abs{x - y}^{d-\alpha}} \dif x \dif y\Biggr)^{\frac{2d-p(d-2s)}{2p(d+\alpha-q(d-2s))}}
\end{equation}
hold for all radially symmetric functions $\varphi\in \mathcal{E}^{s, \alpha, q}_{\mathrm{rad}}(\R^d)$ if and only if
\begin{align}
  p &> p_{\mathrm{rad}}
    && \text{if } s \ge \frac{d}{2},\label{e-main-radial-0}\\
  p&\in\Big(p_{\mathrm{rad}}, \frac{2d}{d-2s}\Big]
    &&\text{if } s < \frac{d}{2} \;\text{ and }\; \frac{1}{q}>\frac{d-2s}{d+\alpha},\label{e-main-radial-1}\\
  p&\in\Big[\frac{2d}{d-2s}, p_{\mathrm{rad}}\Big)
    &&\text{if } s < \frac{d}{2} \;\text{ and }\; \frac{1}{q}<\frac{d-2s}{d+\alpha}, \;\; \frac{1}{q}\ne\frac{1-2s}{2},\label{e-main-radial-2}\\
  p&\in\Big[\frac{2d}{d-2s}, q\Big]
  &&\text{if } s < \frac{1}{2} \;\text{ and }\;  \frac{1}{q}=\frac{1-2s}{2}.\label{e-main-radial-2plus}
  \end{align}
If $0<\alpha\le 1$ then inequality \eqref{e-main-radial} holds on $\mathcal{E}^{s, \alpha, q}_{\mathrm{rad}}(\R^d)$ if and only if \eqref{e-main}
holds on $\mathcal{E}^{s, \alpha, q}(\R^d)$.
\end{thm}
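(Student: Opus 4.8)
The plan is to reduce \eqref{e-main-radial} for radial functions to a one--dimensional weighted interpolation inequality, and to read off both the admissible range of $p$ and its sharpness from the interplay between the one--dimensional fractional Sobolev embedding and the way the Coulomb term detects the radial variable. Write $\varphi(x)=v(\abs{x})$ and set $W(r):=\int_{B_r(0)}\abs{\varphi}^q$. Up to dimensional constants $\norm{\varphi}_p^p\asymp\int_0^\infty v(r)^p\,r^{d-1}\dif r$, the seminorm $\norm{\varphi}_{\dot H^s(\R^d)}^2$ is comparable to a one--dimensional fractional seminorm of $v$ with weight $r^{d-1}$ (plus a Hardy--type remainder coming from pairs of very different radii), and --- this is where $\alpha>1$ enters ---
\[
\iint_{\R^d\times\R^d}\frac{\abs{\varphi(x)}^q\abs{\varphi(y)}^q}{\abs{x-y}^{d-\alpha}}\dif x\dif y
\;\asymp\;\int_0^\infty\!\!\int_0^\infty v(r)^q v(\rho)^q\,\max(r,\rho)^{\alpha-d}(r\rho)^{d-1}\dif r\dif\rho
\;\asymp\;\int_0^\infty r^{\alpha-d-1}W(r)^2\dif r .
\]
The first of these comparisons is exactly the statement that the spherical average $\fint_{S^{d-1}}\fint_{S^{d-1}}\abs{r\omega-\rho\omega'}^{\alpha-d}\dif\omega\dif\omega'$ is comparable to $\max(r,\rho)^{\alpha-d}$, which holds if and only if $\alpha>1$; for $0<\alpha\le1$ this average diverges as $\rho\to r$. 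Thus for $\alpha>1$ the Coulomb term controls the nondecreasing profile $W$ in $L^2\bigl((0,\infty),r^{\alpha-d-1}\dif r\bigr)$, and not merely through the scale--by--scale estimate $W(R)\lesssim R^{(d-\alpha)/2}\bigl(\text{Coulomb term}\bigr)^{1/2}$ of \eqref{boundSupBall}; it is this extra ``$L^2$--in--scale'' information, available for radial functions only, that is responsible for the improved range.

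To prove the inequality in the stated range I would argue by dyadic decomposition. On the annulus $\{2^j\le\abs{x}<2^{j+1}\}$, after rescaling to the unit annulus, the radial profile is a genuinely one--dimensional function, so the standard one--dimensional fractional Gagliardo--Nirenberg inequality (on a bounded interval) bounds $\norm{\varphi}_{L^p}$ over that annulus by a product of the rescaled local $\dot H^s$--energy and the local mass $W(2^{j+1})-W(2^j)$, valid for $p$ below the one--dimensional Sobolev exponent $\tfrac{2}{(1-2s)_+}$. Summing over the annuli --- using the monotonicity of $W$ together with $\sum_j 2^{j(\alpha-d)}W(2^j)^2\lesssim\bigl(\text{Coulomb term}\bigr)$, the discretised form of the comparison above, and splitting the $j$--sum at the scale that optimises the bound --- one obtains \eqref{e-main-radial} down to the threshold $p_{\mathrm{rad}}$, which is precisely where this optimisation first succeeds. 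When $s<\tfrac d2$, the classical Sobolev inequality supplies the endpoint $p=\tfrac{2d}{d-2s}$, at which the Coulomb exponent in \eqref{e-main-radial} vanishes; combining it with the dyadic argument and interpolating covers the intervals \eqref{e-main-radial-1}--\eqref{e-main-radial-2}. The value $\tfrac1q=\tfrac{1-2s}{2}$, i.e.\ $q=\tfrac{2}{1-2s}$, is the degenerate case of the one--dimensional Gagliardo--Nirenberg step in which the two interpolation endpoints collapse to a single point $p=q=\tfrac{2}{1-2s}=p_{\mathrm{rad}}$; there a limiting/logarithmic refinement retains the endpoint, producing \eqref{e-main-radial-2plus}.

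For sharpness when $\alpha>1$ I would test with radial functions concentrated on thin spherical shells. A single shell of fixed radius, thickness $\delta$ and height $A$ satisfies $\norm{\varphi}_p^p\asymp A^p\delta$, $\norm{\varphi}_{\dot H^s(\R^d)}^2\asymp A^2\delta^{1-2s}$, and --- because for $\alpha>1$ its Coulomb energy is dominated by pairs of points at distance of order the radius --- its Coulomb term is $\asymp A^{2q}\delta^2$; inserting this into \eqref{e-main-radial} and letting $\delta\to0$ forces $p$ onto the correct side of $p_{\mathrm{rad}}$. A superposition of many thin shells placed at widely separated radii and with shrinking thicknesses produces a logarithmic divergence at $p=p_{\mathrm{rad}}$, which vanishes exactly when $q=\tfrac{2}{1-2s}$; this shows that the endpoint $p_{\mathrm{rad}}$ is excluded in \eqref{e-main-radial-1}--\eqref{e-main-radial-2} but included in \eqref{e-main-radial-2plus}. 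The standard concentration/spreading examples, together with a radial function carrying the extremal singularity of $\dot H^s$ at the origin, exclude $p$ on the wrong side of $\tfrac{2d}{d-2s}$. For $0<\alpha\le1$ the same thin shell instead has a strictly larger Coulomb energy, of the same order (after the relevant rescaling) as that of a single non--radial bump of matching $L^q$--mass, because of the non--integrable angular singularity of the Riesz kernel; hence every counterexample establishing the sharpness of the ranges \eqref{e-main-0}--\eqref{e-main-2} in Theorem~\ref{maint} can be realised by a radially symmetric function with no gain. Thus the radial range cannot be enlarged when $\alpha\le1$, and since \eqref{e-main} trivially implies \eqref{e-main-radial}, the two ranges coincide, which is the last assertion of the theorem.

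The main obstacle is the one--dimensional interpolation when $s\le\tfrac12$: there $\dot H^s(\R)$ does not embed into $L^\infty(\R)$, so no pointwise Strauss--type bound on the radial profile is available --- this is exactly what obstructed the earlier approaches --- and the whole argument must be run through the three integral quantities, with the nonlocal Coulomb functional playing the structural role of an $L^q$--norm in the Gagliardo--Nirenberg scheme. Keeping exact control of the scale--splitting in the annular summation, and of the logarithmic corrections at the borderline $q=\tfrac{2}{1-2s}$, is the delicate point. A preliminary technical step, to be settled first, is the rigorous two--sided comparison of the Coulomb term with $\int_0^\infty r^{\alpha-d-1}W(r)^2\dif r$, including the integrability threshold $\alpha>1$ for the spherical average of $\abs{x-y}^{\alpha-d}$.
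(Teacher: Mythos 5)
Your necessity half is essentially the paper's own argument: the thin annular bump $\lambda\,\eta((\abs{x}-R)/S)$ with $\norm{\cdot}_{p}^p\simeq\lambda^pR^{d-1}S$, $\norm{\cdot}_{\dot H^s}^2\simeq\lambda^2R^{d-1}S^{1-2s}$ and Coulomb energy $\simeq\lambda^{2q}R^{d+\alpha-2}S^2$ for $\alpha>1$ (versus $\lambda^{2q}R^{d-1}S^{1+\alpha}$ for $\alpha<1$) is exactly the family $u_{\lambda,R,S}$ of Section~5, the multibump superposition at widely separated radii is the sequence $v_{R,m}$ used there to exclude the endpoint $p=p_{\mathrm{rad}}$, and your explanation of why $\alpha\le1$ kills the radial gain (the non-integrable angular singularity of the kernel inflates the shell's Coulomb energy to the non-radial order) is the content of the kernel estimate \eqref{AS} and Theorem~\ref{thm:sharp2}. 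One can check that your single fixed-radius shell with $\delta\to0$ does already force $p$ onto the correct side of $p_{\mathrm{rad}}$, so this part, once the Coulomb computation for the shell is written out, is sound and close to the paper.

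The sufficiency half is where you diverge from the paper and where the proposal has genuine gaps. The paper splits $\R^d$ at a single scale $R$ and combines Ruiz's weighted Coulomb estimate \eqref{Ruiz-ext}--\eqref{Ruiz-int} with De N\'apoli's pointwise interpolation \eqref{DeNapoli-ineq} for $s>1/2$ and, crucially, with Rubin's radial Stein--Weiss inequality \eqref{Rubin} (with \emph{negative} weight exponent $\beta$) for $s\le1/2$; the latter is precisely the substitute for the missing pointwise Strauss bound, and the endpoint case $q=\frac{2}{1-2s}$ is handled by the resulting ``weak Ni'' inequality \eqref{e-weakNI} combined with the elementary bound \eqref{boundSupBall}. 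Your dyadic-annulus scheme is a plausible alternative in spirit, but as written it rests on several unestablished steps: (i) the two-sided comparison of the Coulomb term with $\int_0^\infty r^{\alpha-d-1}W(r)^2\dif r$, which you yourself defer; (ii) the localization of $\norm{\varphi}_{\dot H^s(\R^d)}^2$ to dyadic annuli with almost-orthogonality and its identification with a weighted one-dimensional seminorm for \emph{arbitrary} $s>0$ -- fractional seminorms do not localize, and this reduction is itself a nontrivial theorem; (iii) the assertion that the annular summation/optimization ``first succeeds'' exactly at $p_{\mathrm{rad}}$, for which no computation is offered; and (iv) the endpoint $q=\frac{2}{1-2s}$, where ``a limiting/logarithmic refinement retains the endpoint'' is a placeholder, not an argument. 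Most importantly, for $s\le1/2$ you correctly identify the failure of pointwise bounds as the main obstacle but propose no concrete replacement; without an ingredient playing the role of Rubin's inequality, the sufficiency direction of the theorem in the regime $s\le1/2$ (which includes the physically relevant $s=1/2$ and is the main novelty here over \cite{BGO}) is not proved.
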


In the important special case $s=1/2$ we have the simplified expression $p_{\mathrm{rad}}=q+\frac{d-\alpha}{d-1}$,
while for $s=0$ we formally obtain $p_{\mathrm{rad}}=2$.

In the special case \(d = 3\), $s=1$, \(\alpha = 2\) and \(q = 2\) the improved radial inequality \eqref{e-main-radial} was first established in \cite{Ruiz-ARMA}*{Theorem 1.2}.
For $d\in\N$, $s=1$, $\alpha\in(0,d)$ and $q\ge 1$ the improved radial inequalities \eqref{e-main} were studied in \cite{MMVS}*{Theorem 4}.
The fractional case $d=3$, $1/2<s<3/2$, $\alpha=2$, $q=2$ was considered in \cite{BGO}.

We shall emphasise that the radial improvement is possible for any $s>0$ but if and only if $\alpha>1$.
The \emph{universality} of the threshold $\alpha=1$ which does not depend on any other parameter in the problem is quite interesting.

Another new and purely fractional phenomenon is the special role of the exponent $q=\frac{2}{1-2s}$ in the case $s<1/2$.
Observe that for $s\ge 1/2$ we always have $p_{\mathrm{rad}}>q$, while $p_{\mathrm{rad}}<q$ if $s<1/2$ and $q>\frac{2}{1-2s}$,
the latter requires $q>\frac{d+\alpha}{d-2s}$. If $s<1/2$ and $q=\frac{2}{1-2s}$ then $p_{\mathrm{rad}}=q$ and this is the only case when the endpoint embedding $\mathcal{E}^{s, \alpha, q}_{\mathrm{rad}}(\R^d)\hookrightarrow L^{p_{\mathrm{rad}}}(\R^d)$ is valid.

Finally, we prove that the embedding $\mathcal{E}^{s, \alpha, q}_{\mathrm{rad}}(\R^d)\hookrightarrow L^p(\R^d)$
is compact provided that $p$ is not an endpoint of the embedding intervals.

\begin{thm}[Compact embeddings for radial functions]\label{thm:radial comp}
Let $d\ge 2$, $s>0$, and $q\in[1,\infty)$.  Moreover we assume that $p$ is away from the endpoints of the intervals in (\ref{e-main-0})--(\ref{e-main-2}) when $0<\alpha \leq 1$ and in (\ref{e-main-radial-0})--(\ref{e-main-radial-2plus}) when $1<\alpha<d$.
Then, the embedding $ \mathcal{E}^{s, \alpha, q}_{\mathrm{rad}}(\R^d) \hookrightarrow L^{p}(\R^d)$ is compact.
\end{thm}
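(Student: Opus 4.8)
The plan is the classical two-step scheme for compactness of radial embeddings: a \emph{local} compactness statement, which does not use the symmetry, together with a \emph{uniform decay of the tails at infinity}, which is exactly where radial symmetry enters. Let $(\varphi_n)_n$ be a bounded sequence in $\mathcal{E}^{s,\alpha,q}_{\mathrm{rad}}(\R^d)$. Since $p$ is assumed to lie strictly inside the relevant embedding interval --- \eqref{e-main-0}--\eqref{e-main-2} when $0<\alpha\le 1$, \eqref{e-main-radial-0}--\eqref{e-main-radial-2plus} when $1<\alpha<d$ --- I fix an exponent $p_+>p$ still in that interval and with $p_+\ge 2$ (always possible); by Theorem~\ref{maint} (resp.\ Theorem~\ref{thm:radialpartial}) the embedding $\mathcal{E}^{s,\alpha,q}_{\mathrm{rad}}(\R^d)\hookrightarrow L^{p_+}(\R^d)$ holds, while by \eqref{boundSupBall} the sequence is bounded in $L^q(B_R)$ for every ball $B_R$ centred at the origin. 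Passing to a subsequence, $(-\Delta)^{s/2}\varphi_n\rightharpoonup(-\Delta)^{s/2}\varphi$ in $L^2(\R^d)$ and $\varphi_n\rightharpoonup\varphi$ in $L^{p_+}_{\mathrm{loc}}(\R^d)$ for some radial $\varphi$; the goal is to prove $\varphi_n\to\varphi$ in $L^p(\R^d)$.

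\emph{Local compactness.} Being bounded in $\dot H^s(\R^d)$ and in $L^{p_+}(B_R)\subseteq L^2(B_R)$, the sequence is bounded in $H^s(B_R)$ for every $R>0$. By the Rellich--Kondrachov theorem for fractional Sobolev spaces, after a further (diagonal) subsequence $\varphi_n\to\varphi$ in $L^r(B_R)$ for every $r<\tfrac{2d}{d-2s}$ (every $r<\infty$ if $s\ge\tfrac d2$) and every $R>0$, and in particular $\varphi_n\to\varphi$ a.e. Choosing $r<\min\{p,\tfrac{2d}{d-2s}\}$ and interpolating, $\norm{\varphi_n-\varphi}_{L^p(B_R)}\le\norm{\varphi_n-\varphi}_{L^r(B_R)}^{\theta}\norm{\varphi_n-\varphi}_{L^{p_+}(B_R)}^{1-\theta}\to 0$ thanks to the uniform $L^{p_+}$-bound; thus $\varphi_n\to\varphi$ in $L^p_{\mathrm{loc}}(\R^d)$.

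\emph{Uniform decay of the tails.} It remains to show $\lim_{R\to\infty}\sup_n\int_{\R^d\setminus B_R}\abs{\varphi_n}^p=0$. I would decompose $\R^d\setminus B_R$ into the dyadic annuli $A_j=B_{2^{j+1}R}\setminus B_{2^jR}$, $j\ge 0$, and estimate $\int_{A_j}\abs{\varphi_n}^p$ by a dimension-reduced interpolation inequality for radial functions: on $A_j$ a radial function is essentially a one-variable profile, so one bounds $\norm{\varphi_n}_{L^p(A_j)}$ in terms of $\bignorm{(-\Delta)^{s/2}\varphi_n}_{L^2(\R^d)}$, of $\int_{A_j}\abs{\varphi_n}^q$, and of $\iint_{A_j\times A_j}\frac{\abs{\varphi_n(x)}^q\abs{\varphi_n(y)}^q}{\abs{x-y}^{d-\alpha}}\dif x\dif y$, the reduced dimension producing a negative power $(2^jR)^{-\delta}$ with $\delta=\delta(d,s,\alpha,q,p)>0$ precisely in the stated ranges of $p$ (when $\alpha\le 1$ the gain only matches the scaling of the non-radial inequality, but since $p$ stays below the critical exponent the same bookkeeping still closes). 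Since the ``annular masses'' $\int_{A_j}\abs{\varphi_n}^q$ and $\iint_{A_j\times A_j}(\cdots)$ sum over $j\ge 0$ to quantities bounded by a fixed power of $\norm{\varphi_n}_{\mathcal{E}^{s,\alpha,q}(\R^d)}$, hence uniformly in $n$, summing the resulting geometric series yields $\int_{\R^d\setminus B_R}\abs{\varphi_n}^p\le CR^{-\delta}$ uniformly in $n$. Combining this with the local compactness and Fatou's lemma for $\varphi$ in the standard way --- fix $R$ so that all the tails are $<\eps$, then $N$ so that $\int_{B_R}\abs{\varphi_n-\varphi}^p<\eps$ for $n\ge N$ --- gives $\limsup_n\norm{\varphi_n-\varphi}_p^p\le C\eps$, so that $\varphi_n\to\varphi$ in $L^p(\R^d)$.

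\emph{Main obstacle.} The delicate point is the annular estimate when $s\le\tfrac12$: the pointwise Strauss bound $\abs{x}^{(d-2s)/2}\abs{\varphi(x)}\lesssim\norm{\varphi}_{\dot H^s(\R^d)}$ is then unavailable, so the decay of $\norm{\varphi_n}_{L^p(A_j)}$ cannot be extracted from a pointwise estimate and one must keep the Coulomb term in the interpolation on $A_j$. There the Morrey-type inequality \eqref{boundSupBall} forces the $L^q$-average of $\varphi_n$ over $A_j$ to decay like $(2^jR)^{-(d+\alpha)/2}$ times the square root of the Coulomb energy, and the competition between this rate and the one-dimensional Sobolev exponent $\tfrac{2}{1-2s}$ is what simultaneously produces $\delta>0$, makes the threshold $\alpha>1$ appear, and singles out the exceptional exponent $q=\tfrac{2}{1-2s}$; it is also what lets the scheme cover the full range $s>0$, the half-line case $s\ge\tfrac d2$ (no critical exponent, $p$ arbitrarily large) being handled identically. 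I expect the exponent arithmetic in this step to be the only substantial work, and the scaling identities already displayed in the proofs of Theorems~\ref{maint} and~\ref{thm:radialpartial} to supply it.
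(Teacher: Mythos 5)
Your overall strategy --- local compactness plus a tail estimate that is uniform in $n$ and $o(1)$ as $R\to\infty$ --- is exactly the paper's, so the proposal is correct in outline; the differences are in how each half is implemented. For the local half, the paper does not invoke Rellich--Kondrachov on a ball directly: Lemma~\ref{local compactness} multiplies by a cutoff, uses the fractional Leibniz rule together with Theorem~\ref{lem:gen} to land in $H^s(\R^d)$, and then runs a Plancherel/dominated-convergence argument, concluding compactness into $L^1_{\mathrm{loc}}$ and upgrading to $L^p_{\mathrm{loc}}$ by interpolation against the $L^q_{\mathrm{loc}}$ bound \eqref{boundSupBall}; your route via restriction to $H^s(B_R)$ and interpolation against an $L^{p_+}$ bound is a legitimate alternative. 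For the tail half, which is where all the content sits, your dyadic decomposition and the ``dimension-reduced interpolation inequality on each annulus'' are never actually stated or proved, and this is the one real gap in the write-up as it stands; however, the inequality you are groping for is precisely what the paper already established in proving Theorems~\ref{maint} and~\ref{thm:radialpartial}, namely the exterior estimates \eqref{e-int-larges}, \eqref{e-ext-smalls}, \eqref{e-ext} and the weak Ni inequality \eqref{e-weakNI}, each of which bounds $\int_{\R^d\setminus B_R}\abs{\varphi}^p$ on the \emph{whole} exterior by the $\dot H^s$ and Coulomb quantities times an explicit factor $R^{-\delta}$ with $\delta>0$ exactly when $p$ is strictly interior to the admissible interval (for \eqref{e-ext-smalls} the gain is the $R^{-2\eps\cdot\frac{1}{2}\frac{r-p}{r-q}}$ coming from Ruiz's inequality \eqref{Ruiz-ext}, for \eqref{e-weakNI} it is $R^{d-p(\frac d2-s)}<R^0$ once $p>\frac{2d}{d-2s}$). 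So the dyadic annuli and the geometric summation are unnecessary: quoting those estimates with the strict inequalities $p>p_{\mathrm{rad}}$, $p<p_{\mathrm{rad}}$, or $p>\frac{2d}{d-2s}$ (this is where the hypothesis that $p$ avoids the endpoints enters) closes the argument at once, and the same estimates with $\eps$ small also cover $0<\alpha\le 1$ since there $p$ is strictly interior to \eqref{e-main-0}--\eqref{e-main-2}. I would therefore replace your annular step by a direct appeal to Propositions~\ref{prop-radial-ext} and~\ref{prop-radial-int} (Table~\ref{table1}), keeping track of the powers of $R$; as written, asserting that ``the exponent arithmetic will work out'' is not yet a proof of the only nontrivial step.
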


Compactness of the radial embedding implies in a standard way the existence of radial optimizers associated to the inequalities \eqref{e-main-radial},
cf. \cite{MMVS}*{Section 7} where the case $s=1$ was considered.

\subsection{Open questions.}
Here we list some of the open problems related to the results in the present work.

\subsubsection{Radial symmetry breaking.}
It is an open question whether the optimal constants $C$ and $C_{\mathrm{rad}}$ in \eqref{e-main} and \eqref{e-main-radial} share the same value for $p$ in the intervals \eqref{e-main-0}--\eqref{e-main-2}, where both constants are well-defined.
A result by D.\thinspace{}Ruiz \cite{Ruiz-ARMA}*{theorem 1.7} gives an indirect indication that $C<C_{\mathrm{rad}}$ might be possible, at least for the values of $p$ close to $\frac{2(2qs+\alpha)}{2s+\alpha}$. However the problem remains open even in the well--studied case $s=1$, $\alpha=2$, $q=2$.

\subsubsection{Radial compactness in the borderline case $\alpha=1$ and $p=\frac{2(2qs+\alpha)}{2s+\alpha}$.}
Compactness of the borderline embedding $\mathcal{E}^{s,1, q}_{\mathrm{rad}}(\R^d) \hookrightarrow L^{\frac{2(2qs+1)}{2s+1}}(\R^d)$ is open. This includes $\mathcal{E}^{1/2,1,2}_{\mathrm{rad}}(\R^2) \hookrightarrow L^3(\R^2)$, which appears in the ultra-relativistic TFDW model for graphene studied in \cite{LMM-2015}.

\subsubsection{Other symmetries.}
We believe that the critical threshold $\alpha=1$ for the radial improvement is related to the essential uni-dimensionality of radial functions. It seems plausible that the Coulomb-Sobolev embeddings can be improved for other types of symmetries. A natural conjecture would be that the relevant value of the critical constant $\alpha$ is the number of variables on which the symmetric functions depend. For example, for axisymmetric functions in $\R^3$, we would expect a critical value $\alpha = 2$.

\subsection{Outline.}
The rest of the paper is organised as follows.
Section~\ref{s-complete} contains a short proof of the completeness of the Coulomb--Sobolev spaces.
In Section~\ref{s-nonradial} we discuss the spaces \(\mathcal{E}^{s, \alpha, q} (\R^d)\) in the nonradial context
and show that interpolation inequalities of Theorems~\ref{maint} and~\ref{thm:sobimp} can be deduced from the standard fractional Gagliardo--Nirenberg inequality \eqref{eq:GN} using a fractional chain rule.
We also discuss the existence of the optimisers and prove Theorem~\ref{thm:sobimp-eps}.
In Section~\ref{s-radial} we derive the radial improvement of Theorem~\ref{thm:radialpartial} as a consequence of
Ruiz's inequality for Coulomb energy (see Theorem~\ref{thm:ruiz}) and de Napoli's interpolation inequality (see Theorem~\ref{thm:denapoli}),
which is a fractional extension of the classical pointwise Strauss type bounds valid only for $s>1/2$.
In case $s\le 1/2$ we replace de Napoli's pointwise bounds by Rubin's inequality (Theorem~\ref{thm:Rubin}), which is a refinement for radial functions of the classical Stein--Weiss inequality.
In Section~\ref{s-radial-opt} we construct special families of functions which are used to prove the optimality of the radial embeddings,
while in Section~\ref{compactness section} we prove the compactness of the radial embedding.

\subsection{Asymptotic notation.}

For real valued functions $f(t), g(t) \geq 0$, we write: \smallskip

$f(t)\lesssim g(t)$ if there exists $C>0$ independent of $t$
such that $f(t) \le C g(t)$;

$f(t)\simeq g(t)$ if $f(t)\lesssim g(t)$ and
$g(t)\lesssim f(t)$.

\noindent
As usual, $C,c,c_1$, etc., denote generic positive constants independent of $t$.

\section{Completeness of the fractional Coulomb--Sobolev space}\label{s-complete}

As in \cite{MMVS}*{Section 2}, it is not difficult to see that the space \(\mathcal{E}^{s, \alpha, q} (\R^d)\) is a normed space.

\begin{prop}\label{propositionComplete}
For every $d\in\N$, $s>0$, $0<\alpha<d$ and $q\in[1,\infty)$, the normed space \(\mathcal{E}^{s, \alpha, q} (\R^d)\) is complete.
\end{prop}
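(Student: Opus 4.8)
The plan is to adapt the standard completeness argument for spaces of this kind, following \cite{MMVS}*{Section~2}. Let $(\varphi_n)_{n\in\N}$ be a Cauchy sequence in $\mathcal{E}^{s,\alpha,q}(\R^d)$. Directly from the definition of the norm, the sequence $\bigl((-\Delta)^{\frac{s}{2}}\varphi_n\bigr)_n$ is Cauchy in $L^2(\R^d)$, and the Coulomb energies of the differences, namely $\iint_{\R^d\times\R^d}\frac{\abs{\varphi_n(x)-\varphi_m(x)}^q\,\abs{\varphi_n(y)-\varphi_m(y)}^q}{\abs{x-y}^{d-\alpha}}\dif x\dif y$, tend to $0$ as $n,m\to\infty$. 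Applying \eqref{boundSupBall} to $\varphi_n-\varphi_m$ shows that for every $R>0$ the restrictions $\varphi_n|_{B_R(0)}$ form a Cauchy sequence in $L^q(B_R(0))$, so there is a measurable function $\varphi\colon\R^d\to\R$ with $\varphi_n\to\varphi$ in $L^q_{\mathrm{loc}}(\R^d)$; passing to a subsequence, we may also assume $\varphi_n\to\varphi$ almost everywhere.

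Next I would verify that $\varphi\in\mathcal{E}^{s,\alpha,q}(\R^d)$ and identify its fractional Laplacian. Fatou's lemma, applied with respect to the measure $\abs{x-y}^{\alpha-d}\dif x\dif y$ and the almost everywhere convergence, bounds the Coulomb energy of $\varphi$ by the liminf of the Coulomb energies of $\varphi_n$, which is finite since Cauchy sequences are bounded; by \eqref{boundSupBall} this also forces $\int_{B_R(0)}\abs{\varphi}^q$ to grow at most polynomially in $R$, so $\varphi$ is a tempered distribution. I would then upgrade the local convergence to convergence in $\mathcal{S}'(\R^d)$ by splitting $\langle\varphi_n-\varphi,\phi\rangle$ over dyadic annuli: on a fixed ball the contribution vanishes by the $L^q$-convergence, while on its complement one combines the bound $\norm{\varphi_n-\varphi}_{L^q(B_R(0))}\lesssim R^{(d-\alpha)/(2q)}$, uniform in $n$ by \eqref{boundSupBall}, with the rapid decay of $\phi\in\mathcal{S}(\R^d)$. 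Hence $\widehat{\varphi_n}\to\widehat{\varphi}$ in $\mathcal{S}'(\R^d)$, and in particular in $\mathcal{D}'(\R^d\setminus\{0\})$; on the other hand $\widehat{\varphi_n}=\abs{\xi}^{-s}\bigl(\abs{\xi}^{s}\widehat{\varphi_n}\bigr)\to\abs{\xi}^{-s}h$ in $L^1_{\mathrm{loc}}(\R^d\setminus\{0\})$, where $h\in L^2(\R^d)$ is the $L^2$-limit of $\abs{\xi}^{s}\widehat{\varphi_n}$. Uniqueness of distributional limits then yields $\abs{\xi}^{s}\widehat{\varphi}=h\in L^2(\R^d)$, so $\varphi\in\mathcal{E}^{s,\alpha,q}(\R^d)$ and $(-\Delta)^{\frac{s}{2}}\varphi_n\to(-\Delta)^{\frac{s}{2}}\varphi$ in $L^2(\R^d)$.

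Finally, to obtain $\norm{\varphi_n-\varphi}_{\mathcal{E}^{s,\alpha,q}(\R^d)}\to0$ it remains to control the Coulomb part of the difference: given $\eps>0$, pick $N$ so that $\iint_{\R^d\times\R^d}\frac{\abs{\varphi_n(x)-\varphi_m(x)}^q\,\abs{\varphi_n(y)-\varphi_m(y)}^q}{\abs{x-y}^{d-\alpha}}\dif x\dif y<\eps$ for all $n,m\ge N$, and let $m\to\infty$ along the almost everywhere convergent subsequence; Fatou's lemma gives the same inequality with $\varphi_m$ replaced by $\varphi$, for every $n\ge N$. Combined with the $L^2$-convergence of $(-\Delta)^{\frac{s}{2}}\varphi_n$ from the previous step, this proves $\varphi_n\to\varphi$ in $\mathcal{E}^{s,\alpha,q}(\R^d)$, hence completeness. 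The step I expect to be the main obstacle is the identification of the $L^2$-limit of $(-\Delta)^{\frac{s}{2}}\varphi_n$ with $(-\Delta)^{\frac{s}{2}}\varphi$: because $\mathcal{E}^{s,\alpha,q}(\R^d)$ carries no global Lebesgue embedding, one cannot pass to the Fourier side for free, and the tempered-distribution convergence above — extracted from \eqref{boundSupBall} together with the Schwartz decay — is precisely what bridges that gap.
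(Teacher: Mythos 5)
Your argument is correct and follows essentially the same route as the paper: extract the $L^2$-limit of $(-\Delta)^{\frac{s}{2}}\varphi_n$, use \eqref{boundSupBall} to get an $L^q_{\mathrm{loc}}$ limit and then convergence in $\mathcal{S}'(\R^d)$, identify the fractional Laplacian on the Fourier side, and close the Coulomb term with Fatou's lemma. Your dyadic-annuli justification of the tempered-distribution convergence and the explicit factorization $\widehat{\varphi_n}=\abs{\xi}^{-s}\bigl(\abs{\xi}^{s}\widehat{\varphi_n}\bigr)$ merely spell out steps the paper leaves implicit.
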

\begin{proof}
If \((u_n)_{n \in \N}\) is a Cauchy sequence in \(\mathcal{E}^{s, \alpha, q}(\R^d)\),
then \(((-\Delta)^\frac{s}{2} u_n)_{n \in \N}\) is a Cauchy sequence in \(L^2 (\R^d)\) and
there exists thus \(f \in L^2 (\R^d)\) such that \(((-\Delta)^\frac{s}{2} u_n)_{n \in \N}\)
converges strongly to \(f\) in \(L^2 (\R^d)\).
On the other hand, by \eqref{boundSupBall} we have for every \(R > 0\),
\[
 \lim_{m, n \to \infty} \int_{B_R (0)} \abs{u_n - u_m}^q  = 0.
\]
There exists thus a measurable function \(u : \R^d \to \R\)
such that \((u_n)_{n \in \N}\) converges
to \(u\) in \(L^q_{\mathrm{loc}} (\R^d)\).
By Fatou's lemma, we have
\begin{multline*}
 \lim_{n \to \infty} \iint_{\R^d \times \R^d} \frac{\abs{u_n (x) - u (x)}^q\,\abs{u_n (y) - u_n (y)}^q}
 {\abs{x - y}^{d - \alpha}}\dif x \dif y\\
 \le \lim_{n \to \infty} \liminf_{m \to \infty} \iint_{\R^d \times \R^d}\frac{\abs{u_n (x) - u_m (x)}^q\,\abs{u_n (y) - u_m (y)}^q}
 {\abs{x - y}^{d - \alpha}}\dif x \dif y.
\end{multline*}

It remains now to prove that \((-\Delta)^\frac{s}{2} u = f\).
We observe that by \eqref{boundSupBall},
\[
 \lim_{n \to \infty} \sup_{R > 0} \frac{1}{R^\frac{d - \alpha}{2}} \int_{B_R(0)} \abs{u_n - u}^q = 0.
\]
Therefore \((u_n)_{n \in \N}\) converges to \(u\) as tempered distributions \(\R^d\),
and thus the sequence \((\widehat{u}_n)_{n \in \N}\) converges to \(\widehat{u}\) as tempered distributions on \(\R^d\).
It follows that \(((2 \pi)^{s/2}\abs{\xi}^s \widehat{u_n})_{n \in \N}\) converges to \(2 \pi^{s/2} \abs{\xi}^s \abs{\xi}^s \widehat{u}\) as distributions on \(\R^d\).
Since on the other hand, \(((2 \pi)^{s/2}\abs{\xi}^s \widehat{u_n})_{n \in \N}\) converges to \(\widehat{f}\) it follows that \((-\Delta)^\frac{s}{2} u = f\).
\end{proof}

\section{Gagliardo--Nirenberg inequalities: Proof of Theorems~\ref{maint},~\ref{thm:sobimp} and~\ref{thm:sobimp-eps}}\label{s-nonradial}
We first establish the endpoint inequality.
\begin{thm}\label{lem:gen}
Let $d\in\N$, $s>0$, $0<\alpha<d$ and $q\in[1,\infty)$.
Then the following inequality holds
\[
  \norm{\varphi}_{L^{\frac{2 (2qs+\alpha)}{2s+\alpha}}(\R^d)}
  \lesssim  \bignorm{(-\Delta)^\frac{s}{2}\varphi}_{L^2 (\R^d)}^{\frac{\alpha}{2q s+\alpha}} \biggl(\iint_{\R^d \times \R^d}
\frac{\abs{\varphi (x)}^q\,\abs{\varphi (y)}^q}{\abs{x - y}^{d-\alpha}} \dif x \dif y\biggr)^\frac{s}{2q s+\alpha}
\qquad\forall\varphi\in \mathcal{E}^{s, \alpha, q}(\R^d).
\]
In particular, $\mathcal{E}^{s, \alpha, q} (\R^d) \hookrightarrow L^\frac{2(2q s+\alpha)}{2s+\alpha}(\R^d)$ continuously.
\end{thm}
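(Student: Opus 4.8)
The plan is to combine one application of a fractional chain rule with two applications of the standard fractional Gagliardo--Nirenberg inequality, and then to let the scaling structure pin down all the exponents. Throughout write \(p=\tfrac{2(2qs+\alpha)}{2s+\alpha}\) and let \(\mathcal C(\varphi)\) denote the double integral on the right. The first step is the observation that the Coulomb energy is, up to a constant, a negative-order homogeneous Sobolev norm of \(\abs{\varphi}^q\): since \(\widehat{\abs{\cdot}^{-(d-\alpha)}}=c_{d,\alpha}\abs{\xi}^{-\alpha}\), Plancherel's theorem gives
\[
  \mathcal C(\varphi)=c_{d,\alpha}\int_{\R^d}\abs{\xi}^{-\alpha}\bigabs{\widehat{\abs{\varphi}^q}(\xi)}^2\dif\xi=c_{d,\alpha}\bignorm{\abs{\varphi}^q}_{\dot H^{-\alpha/2}(\R^d)}^2 .
\]
One first proves the inequality for \(\varphi\) in a class (say, smooth with compact support) in which all the auxiliary norms below are finite, and then passes to the general case by a routine truncation/approximation argument.

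Assume \(q>1\) (the case \(q=1\) is addressed at the end). Fix an auxiliary exponent \(m\), whose admissible range is the real issue and is discussed below, and set \(\sigma:=s-\tfrac{d(q-1)}{m}\). Combining a fractional chain rule of the form \(\bignorm{(-\Delta)^{\sigma/2}(\abs{\varphi}^q)}_{L^2}\lesssim\bignorm{\abs{\varphi}^{q-1}}_{L^a}\bignorm{(-\Delta)^{\sigma/2}\varphi}_{L^b}\) with \(\tfrac1a+\tfrac1b=\tfrac12\), the identity \(\bignorm{\abs{\varphi}^{q-1}}_{L^a}=\norm{\varphi}_{L^m}^{q-1}\) for \(a=\tfrac m{q-1}\), and the Sobolev embedding \(\dot H^s(\R^d)\hookrightarrow\dot W^{\sigma,b}(\R^d)\) (which is available precisely because \(\tfrac12-\tfrac1b=\tfrac{q-1}{m}=\tfrac{s-\sigma}{d}\)), one obtains
\[
  \bignorm{\abs{\varphi}^q}_{\dot H^{\sigma}(\R^d)}\lesssim\norm{\varphi}_{L^m(\R^d)}^{q-1}\bignorm{(-\Delta)^{s/2}\varphi}_{L^2(\R^d)} .
\]
Next, interpolate \(\abs{\varphi}^q\) between \(\dot H^{\sigma}\) and \(\dot H^{-\alpha/2}\) by the standard fractional Gagliardo--Nirenberg inequality; using \(\bignorm{\abs{\varphi}^q}_{L^{m/q}}=\norm{\varphi}_{L^m}^{q}\), the identity for \(\mathcal C(\varphi)\), and absorbing the factor \(\norm{\varphi}_{L^m}^{(q-1)\eta}\) (with \(\eta\in(0,1)\) the interpolation exponent) into the left-hand side, one obtains \(\norm{\varphi}_{L^m}\lesssim\bignorm{(-\Delta)^{s/2}\varphi}_{L^2}^{\beta_m}\mathcal C(\varphi)^{\gamma_m}\) with \(\beta_m,\gamma_m>0\). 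A final application of the standard fractional Gagliardo--Nirenberg inequality to \(\varphi\) itself, interpolating \(L^m\) with \(\dot H^s\) up to \(L^p\), combined with this bound, yields \(\norm{\varphi}_{L^p}\lesssim\bignorm{(-\Delta)^{s/2}\varphi}_{L^2}^{\beta}\mathcal C(\varphi)^{\gamma}\).

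Every inequality used is homogeneous in \(\varphi\) of the correct degree and covariant under \(\varphi\mapsto\varphi(\mu\,\cdot)\), so the pair \((\beta,\gamma)\) in the last inequality is uniquely determined, and a direct check gives the claimed values \(\beta=\tfrac{\alpha}{2qs+\alpha}\), \(\gamma=\tfrac{s}{2qs+\alpha}\). The continuous embedding is then immediate, since \(\bignorm{(-\Delta)^{s/2}\varphi}_{L^2}\le\norm{\varphi}_{\mathcal E^{s,\alpha,q}}\), \(\mathcal C(\varphi)\le\norm{\varphi}_{\mathcal E^{s,\alpha,q}}^{2q}\) and \(\tfrac{\alpha}{2qs+\alpha}+2q\tfrac{s}{2qs+\alpha}=1\). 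When \(q=1\) no chain rule is needed: the kernel \(\abs{x-y}^{-(d-\alpha)}\) being pointwise nonnegative, \(\mathcal C(\varphi)\ge c_{d,\alpha}\bignorm{\varphi}_{\dot H^{-\alpha/2}}^2\), and a single fractional Gagliardo--Nirenberg interpolation of \(\varphi\) between \(\dot H^s\) and \(\dot H^{-\alpha/2}\) concludes.

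The main obstacle is purely one of bookkeeping: to find \(m\) for which all four ingredients are simultaneously legitimate. The chain rule requires \(0<\sigma<1\) so that \(t\mapsto\abs{t}^q\) is regular enough; the Sobolev embedding requires \(b<\infty\) and \(\sigma<s\); the first Gagliardo--Nirenberg step requires the exponent \(m/q\) to lie in the admissible interpolation range for the pair \((\dot H^{\sigma},\dot H^{-\alpha/2})\), equivalently \(\eta\sigma\ge(1-\eta)\tfrac\alpha2\); and the last Gagliardo--Nirenberg step requires \(m\) to sit between the two Sobolev exponents attached to \(s\) and to \(p\). After the scaling relations are imposed one free parameter survives, and the technical heart of the proof is to verify that this window is nonempty throughout the range \(s>0\), \(q\ge1\), \(0<\alpha<d\). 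In the borderline regime \(s\ge\tfrac d2\) the homogeneous Sobolev embeddings degenerate, and one should take \(\sigma\) just above \(s-\tfrac d2\) and use an \(L^\infty\)-type endpoint; when \(s\) is large relative to \(q\), a higher-order fractional chain rule or a preliminary interpolation lowering the smoothness index is needed, and this case calls for a separate argument.
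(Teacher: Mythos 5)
Your overall strategy---rewriting the Coulomb term as $c\,\bignorm{\abs{\varphi}^q}_{\dot H^{-\alpha/2}}^2$, controlling a positive-order Sobolev norm of $\abs{\varphi}^q$ via a fractional chain rule, interpolating, and absorbing a Lebesgue norm into the left-hand side---is the same as the paper's, and your $q=1$ case and your scaling computation of $(\beta,\gamma)$ are correct. However, for $q>1$ the middle of your argument cannot reach the claimed exponent. Both norms you interpolate between, $\bignorm{\abs{\varphi}^q}_{\dot H^{\sigma}}$ and $\bignorm{\abs{\varphi}^q}_{\dot H^{-\alpha/2}}$, are $L^2$-based, so the interpolation can only produce $\bignorm{\abs{\varphi}^q}_{L^t}$ with $t\ge 2$: the interpolated order $\tau=\eta\sigma-(1-\eta)\tfrac{\alpha}{2}$ must be nonnegative for $\dot H^{\tau}\hookrightarrow L^t$, and for $\tau<0$, $t<2$, no such inequality holds (test it on a spread-out wave packet $e^{ix\cdot\xi_0}\chi(x/N)$, whose $\dot H^{\beta}$-norms are all $\simeq N^{d/2}$ while $\norm{\cdot}_{L^t}\simeq N^{d/t}\gg N^{d/2}$). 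Hence your third step bounds $\norm{\varphi}_{L^m}$ only for $m\ge 2q$. But the target exponent satisfies $p=\tfrac{2(2qs+\alpha)}{2s+\alpha}<2q$ for every $q>1$, and your final Gagliardo--Nirenberg step interpolates $L^m$ with $\dot H^s$, which only \emph{increases} integrability (towards $\tfrac{2d}{d-2s}\ge m$ when $s<\tfrac d2$, and beyond $m$ when $s\ge\tfrac d2$); it cannot descend from $L^m$ to $L^p$ with $p<m$. Concretely, for $d=3$, $s=1$, $\alpha=2$, $q=2$ your scheme bottoms out at $L^4$ while the target is $L^3$. The scaling argument pins down the exponents only \emph{conditionally} on every interpolation weight lying in $[0,1]$, and here one of them does not.

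The paper avoids this by applying the Gagliardo--Nirenberg inequality \eqref{eq:GN} to $\psi=(-\Delta)^{-\alpha/4}\abs{\varphi}^q$ with the top-order derivative measured in $L^r$ for $r<2$---which is exactly what the fractional chain rule delivers, namely $\bignorm{(-\Delta)^{s/2}\abs{\varphi}^q}_{L^r}\lesssim\bignorm{(-\Delta)^{s/2}\varphi}_{L^2}\norm{\varphi}_{L^{(q-1)l}}^{q-1}$ with $\tfrac1r=\tfrac12+\tfrac1l$---so that the output Lebesgue exponent for $\abs{\varphi}^q$ can be taken below $2$, and the absorbed norm is the target norm itself, with no final interpolation needed. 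You should restructure your argument along these lines rather than forcing the positive-order norm of $\abs{\varphi}^q$ into $L^2$. Your own reservations (the nonemptiness of the window for $m$, the cases $s\ge\tfrac d2$, and the restriction $\sigma<1$ in the chain rule) point to further genuinely unproved steps, but they are secondary to the structural obstruction above.
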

The above inequality in the particular case $q=1$ implies that $\mathcal{E}^{s, \alpha, 1}(\R^d)$ embeds continuously into $H^s(\R^d)$.

\begin{proof}[Proof of Theorem~\ref{lem:gen}]
Recall that for all $\phi\in L^1_{\textrm{loc}}(\R^d)$ such that
\begin{equation}\label{finite}
\iint_{\R^d \times \R^d}
\frac{\phi(x)\phi(y)}{\abs{x - y}^{d-\alpha}} \dif x \dif y <\infty
\end{equation}
it holds that
\begin{equation}\label{RieszId}
  \bignorm{(-\Delta)^{-{\frac{\alpha}{4}}}\phi}_{L^2(\R^d)}^2
  =c  \iint_{\R^d \times \R^d}
\frac{\phi(x)\phi(y)}{\abs{x - y}^{d-\alpha}} \dif x \dif y.
\end{equation}
Moreover we recall the endpoint Gagliardo--Nirenberg inequality (see for example \cite{BCD}*{Theorem 2.44})
\begin{equation}\label{eq:GN}
\bignorm{(-\Delta)^{\frac{\alpha}{4}} \psi}_{L^p(\R^d)}\leq C \Vert \psi\Vert _{L^2(\R^d)}^{\frac{2s }{\alpha + 2 s}}
\bignorm{(-\Delta)^{\frac{\alpha}{4}+\frac{s}{2}} \psi}_{L^r(\R^d)}^\frac{\alpha}{\alpha + 2s} \,
\end{equation}
where
\begin{equation*}\label{eq:rel2}
\frac{1}{p}=\frac{1}{2}\Bigl(\frac{2s}{\alpha+2s}\Bigr) + \frac{1}{r}\Bigl(\frac{\alpha}{\alpha+2s}\Bigr).
\end{equation*}
When $q=1$ by (\ref{finite}) and (\ref{RieszId}) it holds that
\begin{equation}\label{q1norm}
\bignorm{(-\Delta)^{-{\frac{\alpha}{4}}}\varphi}_{L^2(\R^d)}^2
  =c  \iint_{\R^d \times \R^d}
\frac{\varphi(x)\varphi(y)}{\abs{x - y}^{d-\alpha}} \dif x \dif y \leq  c\iint_{\R^d \times \R^d}
\frac{\abs{\varphi (x)}\,\abs{\varphi (y)}}{\abs{x - y}^{d-\alpha}} \dif x \dif y.
\end{equation}
Setting $\psi=(-\Delta)^{-{\frac{\alpha}{4}}}\varphi$ and $p=r=2,$ (\ref{eq:GN}) together with (\ref{q1norm}) yields the inequality for $q=1$.

Let $q>1$. Setting $\psi=(-\Delta)^{-{\frac{\alpha}{4}}}|\varphi|^q$ in \eqref{eq:GN}, we get
\begin{equation*}\label{eq:GN2}
\bignorm{\abs{\varphi}^{q}}_{L^p(\R^d)}
  \leq C \bignorm{(-\Delta)^{-{\frac{\alpha}{4}}} \abs{\varphi}^{q}}_{L^2(\R^d)}^{\frac{2 s}{\alpha + 2s}}
         \bignorm{(-\Delta)^\frac{s}{2} \abs{\varphi}^{q}}_{L^r(\R^d)}^{\frac{\alpha}{\alpha + 2s}} \,
\end{equation*}
which implies
\begin{equation}\label{eq:GN3}
\Vert \varphi\Vert _{L^{q p}(\R^d)}^{q}
  \leq C \bignorm{(-\Delta)^{-{\frac{\alpha}{4}}} \abs{\varphi}^{q}}_{L^2(\R^d)}^{\frac{2 s}{\alpha + 2s}}
  \bignorm{(-\Delta)^\frac{s}{2} \varphi}_{L^2(\R^d)}^{\frac{\alpha}{\alpha + 2 s}} \Vert  \varphi\Vert _{L^{(q-1)l}(\R^d)}^{(q-1)\frac{\alpha}{\alpha + 2s}}
\end{equation}
by the fractional chain rule where $\frac{1}{r}=\frac{1}{2} +\frac{1}{l}$ \cite{Gatto-2002}*{Corollary of Theorem 5}.
Now choosing $l$ such that $(q-1)l=q p$, i.e. such that
$$\frac{1}{l}=\frac{q-1}{q p}$$
we conclude that $p=\frac{2\alpha+4q s}{q(2s+\alpha)}$.
By \eqref{eq:GN3} and setting $\phi=|\varphi|^q$ in (\ref{RieszId}), this
concludes the proof.
\end{proof}

\begin{proof}[Proof of Theorem~\ref{maint} and Theorem~\ref{thm:sobimp}]
The exponents for the refined Sobolev inequality  given by Theorem~\ref{thm:sobimp} are derived directly from the endpoint Gagliardo--Nirenberg inequality of Theorem~\ref{lem:gen}.

The scaling-invariant inequalities of Theorem~\ref{maint} follows from the fact that by interpolation between Theorem~\ref{lem:gen} and the classical fractional Sobolev embedding,
$\mathcal{E}^{s, \alpha, q} (\R^d) \hookrightarrow L^p(\R^d)$ continuously for
\begin{align*}
& p\in\left( \frac{2 (2qs+\alpha)}{2s+\alpha},  \frac{2d}{d-2s}\right] & & \text{if}\ 1< q<\frac{d+\alpha}{d-2s} . \\
& p\in\left[ \frac{2d}{d-2s} , \frac{2(2qs+ \alpha)}{2s+\alpha}  \right) & & \text{if}\ q>\frac{d+\alpha}{d-2s}
\end{align*}
Indeed, let us consider the scaling $u_{\lambda}(x)=\lambda^{\frac{d}{p}}u(\lambda x)$ such that $\Vert u_{\lambda}\Vert _{L^p(\R^d)}=\vert \abs{u}\vert _{L^p(\R^d)}$.
From the embedding we get
\[
\Vert u_{\lambda}\Vert _{L^p(\R^d)}^2
  \lesssim \bignorm{(-\Delta)^\frac{s}{2} u_{\lambda}}_{\dot L^2 (\R^d)}^2+\biggl(\iint_{\R^d\times \R^d}
\frac{\abs{u_{\lambda}(x)}^q\, \abs{u_{\lambda}(y)}^q}{\abs{x - y}^{d-\alpha}} \dif x \dif y\biggr)^{\frac{1}{q}},
\]
which gives by scaling
\begin{equation}\label{eq:scc}
\norm{u}_{L^p(\R^d)}^2\lesssim  \lambda^{\frac{2d}{p}-d+2s}\bignorm{(-\Delta)^\frac{s}{2}u_{\lambda}}_{L^2 (\R^d)}^2+\lambda^{\frac{2d}{p}-\frac{(d+\alpha)}{q}}\biggl(\iint_{\R^d\times \R^d}
\frac{\abs{u_{\lambda}(x)}^q\, \abs{u_{\lambda}(y)}^q}{\abs{x - y}^{d-\alpha}} \dif x \dif y\biggr)^{\frac{1}{q}}.
\end{equation}
Notice that when $q=\frac{d+\alpha}{d-2s}$
and $p=\frac{2d}{d-2s}$ we obtain as expected $\frac{2d}{p}-d+2s=0$, $\frac{2d}{p}-\frac{(d+\alpha)}{q}=0$.
Minimizing the right-hand side  of \eqref{eq:scc} with respect to $\lambda $ we get the scaling invariant inequality given by Theorem~\ref{maint}.
The same computation of course works in the radial case.
\smallskip

\noindent
\emph{Optimality of the embedding intervals.}
Given a nonnegative function $\eta\in C^\infty_c(\R^d)\setminus\{0\}$ and a vector \(a \in \R^d\setminus\{0\}\), for $k\in\N$ set
$$u_{a,k}(x)=\eta(x + k a).$$
Following \cite{Ruiz-ARMA}*{Section 5}, we define the functions \(v_{a,m} \in C^\infty_c (\R^N)\) by
\[
  v_{a,m} = \sum_{k = 1}^m u_{a,k}.
\]
Then for $|a|\to\infty$ we obtain
\begin{gather}
 \Vert v_{a,m}\Vert _{L^p(\R^d)}^{p}\simeq m,\label{eq:stimq-m-nonrad}\\
 \Vert v_{a,m}\Vert _{\dot H^{s}(\R^d)}^{2}  \lesssim  m, \label{eq:stimsob-m-nonrad}\\
 \iint_{\R^d\times \R^d}\frac{\vert v_{a,m}(x)\vert^q\,\vert v_{a,m}(y)\vert^{q}}{\abs{x - y}^{d-\alpha}} \dif x \dif y \lesssim  m   \label{eq:stimcoul-m-nonrad}.
\end{gather}
To deduce \eqref{eq:stimsob-m-nonrad}, choose $k\in\N$ such that $k\ge s$.
Interpolating between homogeneous $L^2$ and $\dot{H}^{k}$ norms (cf. \cite{BCD}*{Proposition 1.32}),
for $|a|\to\infty$ we conclude that
\[
 \Vert v_{a,m}\Vert _{\dot H^{s}(\R^d)}^{2}
 \le \Vert v_{a,m}\Vert_{\dot H^{k}(\R^d)}^{\frac{2s}{k}}\Vert v_{a,m}\Vert _{L^2(\R^d)}^{2\big(1-\frac{s}{k}\big)}\lesssim \Big(m\|\eta\|_{\dot H^{k}(\R^d)}^2\Big)^{\frac{s}{k}}\Big(m\|\eta\|_{L^2(\R^d)}^2\Big)^{1-\frac{s}{k}}\lesssim m.
\]

Using the diagonal argument, from \eqref{e-main} we deduce that for all sufficiently large \(m \in \N\) it must hold
\[
  m\lesssim m^{\frac{p(d+\alpha)-2dq}{2(d+\alpha-q(d-2s))}} m^{\frac{2d-p(d-2s)}{2(d+\alpha-q(d-2s))}},
\]
which implies the optimality of the embedding intervals \eqref{e-main-0}--\eqref{e-main-2}.

\smallskip
\noindent
\emph{Existence of the optimizers.}
The existence of optimizers follows almost identically to the proof of \cite{BFV}*{Theorem 2.2}, see also \cite{BGO}*{proof of Corollary 0.1}.
We only sketch the argument.

Fix $p$ inside one of the intervals \eqref{e-main-0}--\eqref{e-main-2}.
By homogeneity and scaling we can assume that an optimizing sequence $(\varphi_n)_{n\in\N}$ in $\mathcal E^{s,\alpha,q}(\R^d)$ satisfies
$$\|\varphi_n\|_{\dot H^s(\R^d)}=\iint_{\R^d \times \R^d}
\frac{|\varphi_n(x)|^q |\varphi_n(y)|^q}{|x-y|^{d-\alpha}} \, \dif x \dif y=1,$$
and $\|\varphi_n\|_{L^{p}(\R^d)}=C(d,s,\alpha)+o(1)$.

Since $p$ is not an end-point of the intervals \eqref{e-main-0}--\eqref{e-main-2},
we can use interpolation inequality \eqref{e-main} to find a uniform upper bound on $\|\varphi_n\|_{L^{p_1}(\R^d)}$ and $\|\varphi_n\|_{L^{p_2}(\R^d)}$,
for some $p_1 < p < p_2$. Therefore, via the $pqr$-lemma \cite{FrolichLiebLoss}*{Lemma 2.1 p.258}
and Lieb's compactness lemma in $\dot H^s(\R^d)$ \cite{BFV}*{Lemma 2.1}, we conclude that
$\varphi_n\rightharpoonup\bar\varphi\neq 0$ in $H^s(\R^d)$.
Finally, using the non-local Brezis--Lieb splitting lemma for the Coulomb term \cite{MMVS}*{Proposition 4.8},
the existence of a maximizer could be proved similarly to the arguments in \cite{BFV}*{pp.661--662} (see also the proof of Theorem~\ref{thm:sobimp-eps}
below for similar estimates).
\end{proof}

\begin{proof}[Proof of Theorem~\ref{thm:sobimp-eps}]
Inequality \eqref{eq:Sobimp-eps} is obtained directly by interpolation
between the classical Sobolev inequality and endpoint refined Sobolev inequality \eqref{eq:Sobimp}
\smallskip

To prove that the best constant $C(d, s, \alpha, \eps)$ in \eqref{eq:Sobimp-eps} is achieved, we will use the following result.

\begin{thm}[Gerard--Meyer--Oru]\label{gmo}
Let $0<s<d/2$ and let $\theta\in\mathcal S(\R^d)$ be such that $\hat\theta$ has compact support, has value $1$ near the origin and satisfies $0\leq\hat\theta\leq 1$. Then there is a constant $C=C_{s,d}(\theta)$ such that for all $u\in \dot H^s(\R^d)$,
$$
\|u\|_{\frac{2d}{d-2s}}\leq C \| u\|_{\dot H^s}^{\frac{d-2s}{d}} \left( \sup_{A>0} A^{d/2+s} \|\theta(A\,\cdot) \star u\|_\infty \right)^{\frac{2s}{d}}.
$$
\end{thm}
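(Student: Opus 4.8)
This is the classical Gérard--Meyer--Oru improvement, so in the write-up it can simply be quoted from \cite{BCD}*{Theorem 1.43} or \cite{Ledoux-2003}; but if a self-contained argument is wanted, the plan is the following. The engine is a homogeneous Littlewood--Paley decomposition $\varphi=\sum_{j\in\Z}\Delta_j\varphi$, and the first step is to convert the scale-mixing quantity $N:=\sup_{A>0}A^{\frac{d}{2}+s}\norm{\theta(A\,\cdot)\star\varphi}_{L^\infty(\R^d)}$ into a clean dyadic block estimate. Since $\hat\theta\equiv 1$ on a neighbourhood of the origin and $\hat\theta$ has compact support, for each $j$ one can pick $A_j\simeq 2^j$ so that the Fourier multiplier of $\theta(A_j\,\cdot)\star\varphi$ equals the constant $A_j^{-d}$ on the (slightly enlarged) frequency support of the symbol of $\Delta_j$; hence $\Delta_j\varphi=A_j^{d}\,\Delta_j\bigl(\theta(A_j\,\cdot)\star\varphi\bigr)$, and since the convolution kernel of $\Delta_j$ has an $L^1$-norm independent of $j$, this gives
\[
  \norm{\Delta_j\varphi}_{L^\infty(\R^d)}
  \lesssim A_j^{d}\,\norm{\theta(A_j\,\cdot)\star\varphi}_{L^\infty(\R^d)}
  \lesssim N\,2^{j(\frac{d}{2}-s)}.
\]

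Next I would prove a pointwise Gagliardo--Nirenberg bound. Fix $x$ and a threshold $J\in\Z$ and split $\varphi(x)=\sum_{j<J}\Delta_j\varphi(x)+\sum_{j\ge J}\Delta_j\varphi(x)$. Using the block estimate above and summing the geometric series (which converges since $s<\tfrac d2$), the low part is bounded by $\sum_{j<J}\norm{\Delta_j\varphi}_{L^\infty}\lesssim N\,2^{J(\frac d2-s)}$. By Cauchy--Schwarz in $j$ together with $\sum_{j\ge J}2^{-2js}<\infty$ (here $s>0$), the high part is bounded by $2^{-Js}g(x)$, where $g(x):=\bigl(\sum_j 2^{2js}\abs{\Delta_j\varphi(x)}^2\bigr)^{1/2}$ is the Littlewood--Paley square function of order $s$, which by Plancherel and almost-orthogonality of the blocks satisfies $\norm{g}_{L^2(\R^d)}\simeq\norm{\varphi}_{\dot H^s(\R^d)}$. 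Optimising the bound $\abs{\varphi(x)}\lesssim N\,2^{J(\frac d2-s)}+2^{-Js}g(x)$ over integers $J$ (choosing $2^J\simeq(g(x)/N)^{2/d}$) yields
\[
  \abs{\varphi(x)}\lesssim N^{\frac{2s}{d}}\,g(x)^{1-\frac{2s}{d}}\qquad\text{for a.e. }x\in\R^d.
\]

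Finally I would conclude by integration: raising to the power $p^*:=\frac{2d}{d-2s}$ and integrating over $\R^d$, the identity $\bigl(1-\tfrac{2s}{d}\bigr)p^*=2$ gives $\norm{\varphi}_{L^{p^*}(\R^d)}\lesssim N^{2s/d}\norm{g^{1-2s/d}}_{L^{p^*}(\R^d)}=N^{2s/d}\norm{g}_{L^2(\R^d)}^{1-2s/d}\simeq N^{2s/d}\norm{\varphi}_{\dot H^s(\R^d)}^{1-2s/d}$, which is exactly the asserted inequality.

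I do not expect a genuinely hard step here; the argument is entirely classical. The part needing the most care is the first step: one must use precisely that $\hat\theta=1$ near $0$ (so that $\theta(A_j\,\cdot)\star\varphi$ reproduces a full dyadic block of $\varphi$) and that $\hat\theta$ is compactly supported (so that $\theta(A\,\cdot)\star\varphi$ is genuinely low-frequency), and one should check that every geometric series above converges \emph{exactly} because $0<s<\tfrac d2$. A few distributional technicalities --- convergence of $\sum_j\Delta_j\varphi$ to $\varphi$, and absolute convergence of the tail series $\sum_{j\ge J}\Delta_j\varphi(x)$ at a.e.\ $x$, which follows from $g\in L^2(\R^d)$ --- should be noted but are routine.
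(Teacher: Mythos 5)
Your argument is correct and is precisely the classical Littlewood--Paley proof of the G\'erard--Meyer--Oru inequality found in the references the paper cites (\cite{BCD}*{Theorem 1.43}, \cite{Ledoux-2003}); the paper itself quotes the theorem without proof, so there is nothing to compare beyond noting that your self-contained argument reproduces the standard one. All the key points check out: the choice $A_j\simeq 2^j$ using $\hat\theta\equiv 1$ near the origin to recover $\norm{\Delta_j\varphi}_{L^\infty}\lesssim N\,2^{j(\frac d2-s)}$, the low/high split with the geometric sums converging exactly because $0<s<\frac d2$, the optimisation $2^J\simeq(g(x)/N)^{2/d}$, and the exponent bookkeeping $(1-\frac{2s}{d})\frac{2d}{d-2s}=2$ in the final integration.
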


Consider a maximizing sequence $(\varphi_n)_{n\in\N}$ for \eqref{eq:Sobimp-eps} such that
$\ \|\varphi_n\|_{\dot H^{s}(\R^d)}=1$ and
$$||\varphi_n||_{\frac{2d }{d-2s}}=\left(C(d, s, \alpha, \eps)+o(1)\right) \big(D(\varphi_n)\big)^{\frac{s(d-2s)}{d(2 s +\alpha)}-\eps},$$
where for brevity, we denoted
$$D(\varphi):=\iint_{\R^d \times \R^d}
\frac{|\varphi(x)|^{\frac{d+\alpha}{d-2s}} |\varphi(y)|^{\frac{d+\alpha}{d-2s}}}{|x-y|^{d-\alpha}} \, \dif x \dif y.$$
Using the endpoint refined Sobolev inequality we infer that
\begin{equation*}
\big(D(\varphi_n)\big)^{\frac{s(d-2s)}{d(2s+\alpha)}-\eps} \lesssim ||\varphi_n||_{\frac{2d }{d-2s}}
\lesssim \big(D(\varphi_n)\big)^{\frac{s(d-2s)}{d(2s +\alpha)}}. \nonumber
\end{equation*}
This implies that
$$1\lesssim   D(\varphi_n)$$
and hence,
\begin{equation}\label{eq:asyim}
1\lesssim ||\varphi_n ||_{\frac{2d }{d-2s}}.
\end{equation}
Let $\bar \varphi$ denotes the weak limit of $(\varphi_n)$ in $\dot H^s(\R^d)$.
Recall that our inequality  \eqref{eq:Sobimp-eps} is critical, i.e. it is both scaling and translation invariant.
From Theorem~\ref{gmo} together with \eqref{eq:asyim} there exists sequences $(x_n)_{n \in \N}\) in $\R^d$ of translations and $(A_n)_{n \in \N}\) in \(\R^{+}$ of dilations such that
$$\inf_n A_n^{\frac{d}{2}+s} \int_{\R^d} \theta (A_n(x_n-y))\varphi_n(y) dy >0.$$
This fact implies by the change of variable that
$$
  A_n^{s-\frac{d}{2}}\varphi_n\Bigl(\frac{x-x_n}{A_n}\Bigr)\rightharpoonup \bar \varphi\neq 0.
$$

The fact  that $\bar \varphi$ is an optimizer is now standard.
By the Brezis--Lieb type splitting properties \cite{Brezis-Lieb-1983} of the three terms in \eqref{eq:Sobimp-eps}
(for the splitting of the nonlocal term $D$ see \cite{MMVS}*{Proposition 4.7}), we obtain
\begin{multline*}
C(d, s, \alpha, \eps)^{-\frac{2d}{d-2s}}\left(||\bar \varphi||_{\frac{2d }{d-2s}}^{\frac{2d}{d-2s}}+||\varphi_n -\bar \varphi ||_{\frac{2d }{d-2s}}^{\frac{2d}{d-2s}}+o(1)\right)\\ \nonumber
\geq \left(  \| \bar \varphi\|_{\dot H^{s}(\R^d)}^2+ \|\varphi_n-\bar \varphi \|_{\dot H^{s}(\R^d)}^2+o(1)\right)^{\frac{d\alpha}{d(2s+\alpha)}+\eps \frac{2d(\alpha+d)}{(d-2s)^2}}\big(D(\bar \varphi) +D(\varphi_n-\bar \varphi)\big)^{\frac{2ds}{d(2s +\alpha)}-\eps \frac{2d}{d-2s}}.
\end{multline*}
Since
\[
 \Bigl(\frac{d\alpha}{d(2s +\alpha)}+\eps \frac{2d(\alpha+d)}{(d-2s)^2}\Bigr)
 + \Bigl(\frac{2ds}{d(2s +\alpha)}-\eps \frac{2d}{d-2s} \Bigr)
 = 1+ \eps \frac{2d (\alpha + 2s)}{(d - 2 s)^2}> 1,
\]
As a consequence of the discrete H\"older inequality we have
\begin{multline*}
a^{\frac{2d\alpha}{d(2s +\alpha)}+\eps \frac{4d(\alpha+d)}{(d-2s)^2}} c^{\frac{2ds}{d(2s +\alpha)}-\eps \frac{2d}{d-2s}} + b^{\frac{2d\alpha}{d(2s +\alpha)}+\eps \frac{4d(\alpha+d)}{(d-2s)^2}} e^{\frac{2ds}{d(2s +\alpha)}-\eps \frac{2d}{d-2s}}\\
\le \left( a^2 + b^2 \right)^{\frac{d\alpha}{d(2s +\alpha)}+\eps \frac{2d(\alpha+d)}{(d-2s)^2}} \left(c+e \right)^{\frac{2ds}{d(2s +\alpha)}-\eps \frac{2d}{d-2s}}
\end{multline*}
for all $a,b,c,e\geq 0$. Hence
\begin{multline*}
C(d, s, \alpha, \eps)^{-\frac{2d}{d-2s}}\left(||\bar \varphi||_{\frac{2d }{d-2s}}^{\frac{2d}{d-2s}}+||\varphi_n -\bar \varphi ||_{\frac{2d }{d-2s}}^{\frac{2d}{d-2s}}+o(1)\right)\\
\geq   \|\bar \varphi\|_{\dot H^{s}(\R^d)}^{\frac{2d\alpha}{d(2s +\alpha)}+\eps \frac{4d(\alpha+d)}{(d-2s)^2}} D(\bar \varphi)^{\frac{2ds}{d(2s +\alpha)}-\eps \frac{2d}{d-2s}}\\
+ \|\varphi_n - \bar \varphi\|_{\dot H^{s}(\R^d)}^{\frac{2d\alpha}{d(2s +\alpha)}+\eps \frac{4d(\alpha+d)}{(d-2s)^2}} D(\varphi_n- \bar \varphi)^{\frac{2ds}{d(2s +\alpha)}-\eps \frac{2d}{d-2s}}+o(1).
\end{multline*}
Therefore we can conclude that
$$C(d, s, \alpha, \eps)^{-\frac{2d}{d-2s}}||\bar \varphi||_{\frac{2d }{d-2s}}^{\frac{2d}{d-2s}}\geq  \|\bar \varphi\|_{\dot H^{s}(\R^d)}^{\frac{2d\alpha}{d(2s +\alpha)}+\eps \frac{4d(\alpha+d)}{(d-2s)^2}} D(\bar \varphi)^{\frac{2ds}{d(2s +\alpha)}-\eps \frac{2d}{d-2s}}+o(1),$$
which implies  that $\varphi$ is an optimizer.
\end{proof}

\section{Sharp improvement in the radial case}\label{s-radial}

In order to establish the radial inequality \eqref{e-main-radial} we will use a version of the weighted estimate involving the Coulomb term which was originally established by Ruiz \cite{Ruiz-ARMA}.

\begin{thm}[Ruiz \cite{Ruiz-ARMA}*{Theorem 1.1}, see also \cite{MMVS}*{Proposition 3.8}]\label{thm:ruiz}
Let $d\in\N$, $0<\alpha<d$, $q\in[1,\infty)$.
Then for every $\eps>0$ and $R>0$ there exists $C=C(d,\alpha,q,\eps)>0$ such that
for all $\varphi\in L^\frac{2dq}{d+\alpha}(\R^d)$,
\begin{equation}\label{Ruiz-ext}
  \int_{\R^d \setminus B_R(0)} \frac{\abs{\varphi(x)}^q}{\abs{x}^{\frac{d - \alpha}{2}+\eps}}\dif x
  \le \frac{C}{R^{\eps}}
    \biggl(\ \iint_{\R^d \times \R^d}\frac{\vert \varphi(x)\vert^{q}\,\vert \varphi(y)\vert^{q}}{\vert x-y\vert^{d-\alpha}}
    \dif x\dif y\biggr)^\frac{1}{2},
\end{equation}
\begin{equation}\label{Ruiz-int}
  \int_{B_R(0)} \frac{\abs{\varphi(x)}^q}{\abs{x}^{\frac{d - \alpha}{2}-\eps}}\dif x
  \le CR^\eps \biggl(\ \iint_{\R^d \times \R^d}
    \frac{\vert \varphi(x)\vert^{q}\,\vert \varphi(y)\vert^{q}}{\vert x-y\vert^{d-\alpha}}
    \dif x\dif y\biggr)^\frac{1}{2}.
\end{equation}
\end{thm}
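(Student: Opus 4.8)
\emph{Sketch of a proof.} Write $g := \abs{\varphi}^q \ge 0$; the hypothesis $\varphi \in L^{2dq/(d+\alpha)}(\R^d)$ means precisely that $g \in L^{2d/(d+\alpha)}(\R^d)$, so by the Hardy--Littlewood--Sobolev inequality the double integral on the right-hand side of \eqref{Ruiz-ext}--\eqref{Ruiz-int} is finite, and by the Riesz identity \eqref{RieszId} its square root equals $c\bignorm{(-\Delta)^{-\frac{\alpha}{4}}g}_{L^2(\R^d)}$. Denote by $\langle g, h_R\rangle$ the integral on the left-hand side of \eqref{Ruiz-ext} (resp.\ of \eqref{Ruiz-int}), which corresponds to the weight $h_R(x) = \mathbf{1}_{\R^d\setminus B_R(0)}(x)\,\abs{x}^{-\frac{d-\alpha}{2}-\eps}$ (resp.\ $h_R(x) = \mathbf{1}_{B_R(0)}(x)\,\abs{x}^{-\frac{d-\alpha}{2}+\eps}$). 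Since $g \ge 0$, we may replace $h_R$ by any pointwise majorant, and we replace the sharp cutoff defining $h_R$ by a smooth one on the side of $\partial B_R(0)$ where $h_R$ vanishes: in the exterior case $\tilde h_R(x) = \chi(\abs{x}/R)\,\abs{x}^{-\frac{d-\alpha}{2}-\eps}$ with $\chi \in C^\infty([0,\infty))$, $\chi \equiv 1$ on $[1,\infty)$ and $\chi \equiv 0$ on $[0,\tfrac12]$ (so $\tilde h_R \in C^\infty(\R^d)$); in the interior case $\tilde h_R(x) = \chi(\abs{x}/R)\,\abs{x}^{-\frac{d-\alpha}{2}+\eps}$ with $\chi \equiv 1$ on $[0,1]$ and $\chi \equiv 0$ on $[2,\infty)$ (so $\tilde h_R$ is compactly supported, smooth away from the origin, with an integrable singularity there). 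In either case $0 \le h_R \le \tilde h_R$. We may moreover assume $\eps$ small, since the estimate with a general $\eps$ follows from the one with a smaller $\eps_0$ upon bounding $\abs{x}^{\pm(\eps-\eps_0)} \le R^{\pm(\eps-\eps_0)}$ on the region of integration.

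The inequalities then follow from the chain
\[
  \langle g, h_R\rangle \;\le\; \langle g, \tilde h_R\rangle
    \;=\; c\,\bigl\langle (-\Delta)^{-\frac{\alpha}{4}}g,\ (-\Delta)^{\frac{\alpha}{4}}\tilde h_R\bigr\rangle_{L^2}
    \;\le\; c\,\bignorm{(-\Delta)^{-\frac{\alpha}{4}}g}_{L^2}\,\bignorm{(-\Delta)^{\frac{\alpha}{4}}\tilde h_R}_{L^2},
\]
once we show $\bignorm{(-\Delta)^{\frac{\alpha}{4}}\tilde h_R}_{L^2(\R^d)} \le C(d,\alpha,\eps)\,R^{-\eps}$ in the exterior case and $\le C(d,\alpha,\eps)\,R^{\eps}$ in the interior case. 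The middle equality combines the self-adjointness of the Riesz potential $(-\Delta)^{-\frac{\alpha}{4}}$ with the reconstruction formula $(-\Delta)^{-\frac{\alpha}{4}}(-\Delta)^{\frac{\alpha}{4}}\tilde h_R = \tilde h_R$; it is legitimate because $(-\Delta)^{-\frac{\alpha}{4}}g \in L^2(\R^d)$ (Hardy--Littlewood--Sobolev, as above) and $(-\Delta)^{\frac{\alpha}{4}}\tilde h_R \in L^2(\R^d)$ (shown below), so that in the kernel representation all the integrals converge absolutely by Hölder's inequality and Hardy--Littlewood--Sobolev. If $\langle g, h_R\rangle = \infty$ a priori, one first runs the argument for $g_n := \min\{g,n\}\,\mathbf{1}_{B_n(0)}$, for which $\bignorm{(-\Delta)^{-\frac{\alpha}{4}}g_n}_{L^2} \le \bignorm{(-\Delta)^{-\frac{\alpha}{4}}g}_{L^2}$, and lets $n \to \infty$ by monotone convergence.

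Everything therefore reduces to estimating $\bignorm{(-\Delta)^{\frac{\alpha}{4}}\tilde h_R}_{L^2}$, where the scaling does the bookkeeping: writing $\tilde h_R(x) = R^{-\frac{d-\alpha}{2}\mp\eps}\,\tilde h_1(x/R)$ with $\tilde h_1$ the profile at $R = 1$, and using that $(-\Delta)^{\frac{\alpha}{4}}$ picks up a factor $R^{-\alpha/2}$ under the dilation $x \mapsto x/R$, a direct computation gives $\bignorm{(-\Delta)^{\frac{\alpha}{4}}\tilde h_R}_{L^2} = R^{\mp\eps}\bignorm{(-\Delta)^{\frac{\alpha}{4}}\tilde h_1}_{L^2}$, the exponents combining to exactly $\mp\eps$ because $\tfrac{d-\alpha}{2} + \tfrac{\alpha}{2} - \tfrac{d}{2} = 0$. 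It remains to check $\bignorm{(-\Delta)^{\frac{\alpha}{4}}\tilde h_1}_{L^2}^2 = c\int_{\R^d}\abs{\xi}^{\alpha}\,\bigabs{\widehat{\tilde h_1}(\xi)}^2\dif\xi < \infty$. In the exterior case $\tilde h_1 \in C^\infty(\R^d)$ agrees with $\abs{x}^{-\frac{d-\alpha}{2}-\eps}$ for $\abs{x}\ge 1$, so $\widehat{\tilde h_1}$ decays faster than any polynomial as $\abs{\xi}\to\infty$ while $\widehat{\tilde h_1}(\xi) \simeq \abs{\xi}^{\frac{d-\alpha}{2}+\eps-d}$ near the origin, making the integrand $\simeq \abs{\xi}^{-d+2\eps}$ for small $\abs{\xi}$, which is integrable; in the interior case $\tilde h_1$ is compactly supported with the integrable singularity $\abs{x}^{-\frac{d-\alpha}{2}+\eps}$ at the origin, so $\widehat{\tilde h_1}$ is bounded for small $\abs{\xi}$ and $\simeq \abs{\xi}^{\frac{d-\alpha}{2}-\eps-d}$ for large $\abs{\xi}$, making the integrand $\simeq \abs{\xi}^{-d-2\eps}$ there, again integrable. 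Chaining the displayed inequalities and recalling $g = \abs{\varphi}^q$ proves \eqref{Ruiz-ext} and \eqref{Ruiz-int}.

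The genuine difficulty is this last step. One cannot take $\tilde h_R = h_R$: the jump of $\mathbf{1}_{B_R(0)}$ across $\partial B_R(0)$ already obstructs membership in $\dot H^{\alpha/2}$ once $\alpha \ge 1$, so the cutoff has to be smoothed; but the smoothing must not spoil the exact homogeneity that delivers the sharp powers $R^{\mp\eps}$, which is why it is performed only on the side of the ball where $h_R$ vanishes, so that the dilation sees just the scale-invariant tail $\abs{x}^{-\frac{d-\alpha}{2}\mp\eps}$ of the majorant.
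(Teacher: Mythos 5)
Your argument is correct, but it is genuinely different from the proof the paper relies on: Theorem~\ref{thm:ruiz} is not proved in the text but cited from Ruiz and from \cite{MMVS}*{Proposition 3.8}, where the proof is the elementary dyadic one. There one covers $\R^d\setminus B_R(0)$ (resp.\ $B_R(0)$) by annuli $A_j=B_{2^{j+1}R}\setminus B_{2^jR}$, uses the same mechanism as \eqref{boundSupBall} on each annulus — namely $\bigl(\int_{A_j}\abs{\varphi}^q\bigr)^2\le C(2^jR)^{d-\alpha}\iint\abs{\varphi(x)}^q\abs{\varphi(y)}^q\abs{x-y}^{\alpha-d}\dif x\dif y$, coming from the pointwise lower bound on the kernel over $A_j\times A_j$ — and then sums the resulting geometric series $\sum_j(2^jR)^{-\eps}$; the role of $\eps>0$ is exactly to make that series converge. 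Your route instead writes the Coulomb energy as $c\bignorm{(-\Delta)^{-\alpha/4}\abs{\varphi}^q}_{L^2}^2$ via \eqref{RieszId} and dualizes against a smoothed majorant of the weight, reducing everything to $\bignorm{(-\Delta)^{\alpha/4}\tilde h_R}_{L^2}\lesssim R^{\mp\eps}$, which scaling makes transparent. The computations check out: the reduction to small $\eps$, the choice of smoothing only on the vanishing side of the cutoff (so the dilation structure is preserved), the Fourier asymptotics of $\tilde h_1$ at $0$ and $\infty$ (where $\eps>0$ is precisely what makes $\abs{\xi}^\alpha\abs{\widehat{\tilde h_1}}^2$ integrable), and the truncation $g_n=\min\{g,n\}\mathbf 1_{B_n}$ to handle an a priori infinite left-hand side are all sound. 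What each approach buys: the dyadic proof is entirely elementary, needs no Fourier analysis or regularity of the weight, and localizes (it works for annuli centered anywhere, which is how \cite{MMVS} states it); your duality proof is conceptually shorter, exhibits the exponents $R^{\mp\eps}$ as pure scaling bookkeeping, and explains the failure at $\eps=0$ as the weight $\abs{x}^{-(d-\alpha)/2}$ sitting exactly on the boundary of the dual space $\dot H^{\alpha/2}(\R^d)$ — but it costs you the care you rightly spend on justifying the Plancherel pairing for objects that are only tempered distributions.
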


We will also employ two different estimate on the functions in $\dot H^{s}_{\mathrm{rad}}(\R^d)$.
In the case $s>1/2$ our proof of \eqref{e-main-radial} relies on the following interpolation result.

\begin{thm}[De N\'{a}poli  \cite{DeNapoli-symmetry}*{Theorem 3.1}]\label{thm:denapoli}
Let $d\geq 2$, $s>\frac{1}{2}$, $r>1$ and
\begin{equation}\label{DeNapoli-1}
-(d-1)\le a<d(r-1).
\end{equation}
Then
\begin{equation}\label{DeNapoli-ineq}
\abs{\varphi (x)}\leq C(d,s,r,a)\abs{x}^{-\sigma} \Vert (-\Delta)^\frac{s}{2} \varphi\Vert _{L^2(\R^d)}^{\theta}\Vert \varphi\Vert _{L^r_a(\R^d)}^{1-\theta}
\qquad\forall\varphi\in\dot H^{s}_{\mathrm{rad}}(\R^d)\cap L^r_a(\R^d),
\end{equation}
where $\sigma=\frac{2s(d-1)+(2s-1)a}{(2s-1)r+2}$, $\theta=\frac{2}{(2s-1)r+2}$ and $ L^r_a(\R^d)$ is the weighted Lebesgue space with the norm
$$\Vert u\Vert _{L^r_a(\R^d)}=\left(\int_{\R^d} \abs{x}^a\abs{u(x)}^r\dif x\right)^{\frac{1}{r}}.$$
\end{thm}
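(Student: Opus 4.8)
We sketch a possible proof of \eqref{DeNapoli-ineq}, independent of \cite{DeNapoli-symmetry}. The estimate is a fractional Strauss–type interpolation inequality, and the plan is to prove it by a Littlewood–Paley decomposition paired with two radial Bernstein–type estimates, one governed by the $\dot H^s$–norm and one by the weighted $L^r_a$–norm. By density one may take $\varphi\in C^\infty_c(\R^d)$ radial, and since \eqref{DeNapoli-ineq} is invariant under the dilations $\varphi\mapsto\varphi(\lambda\,\cdot)$ it suffices to prove it at the single radius $\abs{x}=1$, the factor $\abs{x}^{-\sigma}$ being recovered afterwards by scaling (with $\sigma$ then forced by homogeneity). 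Write $\varphi=\sum_{j\in\Z}\Delta_j\varphi$, where $\Delta_j$ localises frequencies to $\abs{\xi}\simeq 2^j$, so that $\bignorm{(-\Delta)^{s/2}\varphi}_{L^2(\R^d)}^2\simeq\sum_{j\in\Z} 2^{2js}\norm{\Delta_j\varphi}_{L^2(\R^d)}^2$; since $\abs{x}^a$ is a Muckenhoupt weight when $-(d-1)\le a<d(r-1)$, the $\Delta_j$ are also bounded on $L^r_a(\R^d)$, uniformly in $j$. Fix a threshold $j_0\in\Z$, to be optimised at the end.

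For the high frequencies I would use the radial Bernstein estimate that a radial $g$ with $\widehat g$ supported in $\{\abs{\xi}\simeq 2^j\}$ satisfies $\abs{g(x)}\lesssim\abs{x}^{-(d-1)/2}\,2^{j/2}\norm{g}_{L^2(\R^d)}$, which follows either from the Fourier–Bessel representation of radial functions together with $\abs{J_{(d-2)/2}(\tau)}\lesssim\tau^{-1/2}$, or from estimating the spherical average of the convolution kernel of $\Delta_j$. Applying this to $\Delta_j\varphi$, inserting $\norm{\Delta_j\varphi}_{L^2}\le 2^{-js}\bigl(2^{js}\norm{\Delta_j\varphi}_{L^2}\bigr)$ and summing over $j>j_0$ by the Cauchy–Schwarz inequality yields
\[
  \sum_{j>j_0}\abs{\Delta_j\varphi(x)}\lesssim\abs{x}^{-\frac{d-1}{2}}\,2^{j_0(\frac12-s)}\,\bignorm{(-\Delta)^{s/2}\varphi}_{L^2(\R^d)},
\]
the geometric series $\sum_{j>j_0}2^{j(1-2s)}$ being summable \emph{precisely because} $s>\tfrac12$. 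This is the only point at which the hypothesis $s>\tfrac12$ is used, and the exponent $\tfrac{d-1}{2}$ is the fractional analogue of the classical Strauss decay rate.

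For the low frequencies the plan is to establish the weighted radial Bernstein estimate that a radial $g$ with $\widehat g$ supported in $\{\abs{\xi}\simeq 2^j\}$ satisfies $\abs{g(x)}\lesssim\abs{x}^{-\sigma_0}\,2^{j/r}\norm{g}_{L^r_a(\R^d)}$ with $\sigma_0:=\tfrac{a+d-1}{r}$. This is where the parameter condition $-(d-1)\le a<d(r-1)$ genuinely enters: one dominates $g$ by the spherical average of the kernel of $\Delta_j$ and applies Hölder's inequality against the measure $r^{a+d-1}\dif r$; the bound $a<d(r-1)$ supplies the integrability of the resulting power of $r$ near the origin, while $-(d-1)\le a$ is equivalent to $\sigma_0\ge 0$, which is what makes each low-frequency piece carry a genuine (possibly trivial) spatial decay and hence ensures that the degenerate regime $\abs{x}\lesssim 2^{-j}$ — where the spherical average of the kernel does not decay — still assembles into a convergent geometric series. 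Applying this estimate to $\Delta_j\varphi$, using $\norm{\Delta_j\varphi}_{L^r_a}\lesssim\norm{\varphi}_{L^r_a}$, and summing over $j\le j_0$ gives $\sum_{j\le j_0}\abs{\Delta_j\varphi(x)}\lesssim\abs{x}^{-\sigma_0}2^{j_0/r}\norm{\varphi}_{L^r_a(\R^d)}$.

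Adding the two bounds at $\abs{x}=1$ and choosing $j_0\in\Z$ so that $2^{j_0(\frac1r+s-\frac12)}\simeq\bignorm{(-\Delta)^{s/2}\varphi}_{L^2(\R^d)}\big/\norm{\varphi}_{L^r_a(\R^d)}$ balances the two contributions produces \eqref{DeNapoli-ineq} with $\theta=\frac{1/r}{1/r+s-1/2}=\frac{2}{(2s-1)r+2}$, and re-inserting the scaling turns the two $\abs{x}$–powers into a single $\abs{x}^{-\sigma}$ with $\sigma=(1-\theta)\sigma_0+\theta\,\tfrac{d-1}{2}=\frac{2s(d-1)+(2s-1)a}{(2s-1)r+2}$. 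I expect the main obstacle to be the weighted radial Bernstein estimate of the third paragraph: for $r\ne 2$ one loses the $L^2$–duality of the Fourier transform and must instead work directly with the radial convolution kernel, carrying out a Schur–type test against weighted power functions and tracking the sharp parameter range — this is precisely the technical heart of the radial Stein–Weiss theory underlying de Nápoli's theorem. An alternative route, reducing everything to a one–dimensional inequality for the radial profile via the Weyl fractional integral, seems less convenient, because expressing the $d$–dimensional operator $(-\Delta)^{s/2}$ in terms of a one–dimensional fractional derivative of the profile introduces a dimension-dependent correction that is itself delicate for non-integer $s$.
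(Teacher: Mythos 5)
The paper does not prove this statement at all: Theorem~\ref{thm:denapoli} is imported verbatim from De N\'apoli's preprint and used as a black box, so there is no internal proof to compare against. Your proposal is therefore doing more than the paper does, and as an outline it is essentially correct. The architecture (Littlewood--Paley splitting at a threshold $j_0$, an $L^2$-based radial Bernstein bound for high frequencies, a weighted $L^r_a$-based one for low frequencies, then optimisation over $j_0$) is the standard route to such Strauss-type interpolation inequalities, and all your exponent bookkeeping checks out: $\sigma$ is indeed forced by the homogeneity of the three quantities under $\varphi\mapsto\varphi(\lambda\,\cdot)$, the balancing gives $\theta=\frac{1/r}{1/r+s-1/2}=\frac{2}{(2s-1)r+2}$, and the identity $\sigma=(1-\theta)\frac{a+d-1}{r}+\theta\frac{d-1}{2}$ is a correct consistency check. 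The step you flag as the ``main obstacle'' --- the weighted radial Bernstein estimate $\abs{g(x)}\lesssim\abs{x}^{-(a+d-1)/r}2^{j/r}\norm{g}_{L^r_a}$ --- is in fact true under exactly your hypotheses and needs no Schur test: after rescaling to $j=0$, write $g(Re_1)=\int g(y)\,\overline{K}(R,y)\dif y$ with $\overline{K}(R,y)=\fint_{S^{d-1}}\psi(R\omega-y)\dif\sigma(\omega)\lesssim_N(1+R)^{-(d-1)}\bigl(1+\bigabs{\,\abs{y}-R\,}\bigr)^{-N}$, and apply H\"older; the condition $a<d(r-1)$ is precisely what makes $\abs{y}^{-ar'/r}$ locally integrable against $\overline{K}^{\,r'}$, while $a\ge-(d-1)$ is precisely what makes the resulting uniform bound for $R\lesssim1$ dominated by $R^{-(a+d-1)/r}$, confirming your diagnosis of where each hypothesis enters. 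Your high-frequency lemma is the special case $r=2$, $a=0$ of the same estimate (and holds for all $x\ne0$, since for $\abs{x}\lesssim2^{-j}$ it is weaker than ordinary Bernstein). Two cosmetic points: having reduced to $\abs{x}=1$ by scaling, there are no $\abs{x}$-powers left to ``combine'' at the end --- the factor $\abs{x}^{-\sigma}$ comes solely from undoing the dilation; and the pointwise identity $\varphi=\sum_j\Delta_j\varphi$ together with density of smooth radial functions in $\dot H^s_{\mathrm{rad}}\cap L^r_a$ deserves a one-line justification. Neither affects the validity of the argument.
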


\begin{remark}
The inequality \eqref{thm:denapoli} has important special cases:
\begin{enumerate}[i)]
\item
When $r=\frac{2d}{d-2s}$ and $a=0$ we obtain Cho--Ozawa's inequality \cite{CO}:
\begin{equation}\label{Strauss}
\sup_{\vert x\vert >0}\abs{\varphi (x)}\lesssim\vert x\vert^{-\frac{d-2s}{2}}\Vert \varphi\Vert _{\dot{H}^{s}(\R^d)}
\qquad\forall\varphi\in \dot{H}_{\mathrm{rad}}^{s}(\R^d),
\end{equation}

\item
When $r=2$ and $a=0$ we obtain Ni type inequality
\begin{equation*}\label{Ni}
\sup_{\vert x\vert >0}\abs{\varphi (x)}\lesssim\vert x\vert^{-\frac{d-1}{2}}\Vert \varphi\Vert _{\dot{H}^{s}(\R^d)}^\frac{1}{2s}\Vert \varphi\Vert _{L^2(\R^d)}^{1-\frac{1}{2s}}
\qquad\forall\varphi\in \dot{H}_{\mathrm{rad}}^{s}(\R^d).
\end{equation*}
\end{enumerate}

\end{remark}

In the case $s\le 1/2$ pointwise estimates on functions in $\dot{H}_{\mathrm{rad}}^{s}(\R^d)$ are no longer available.
Instead, our proof of \eqref{e-main-radial} relies on the radial version of the classical Stein--Weiss estimate \cite{SteinWeiss1958}.

\begin{thm}[Rubin  \citelist{\cite{Rubin-1982}\cite{DeNapoli-2011}*{Theorem 1.2}\cite{DeNapoli-2014}}]\label{thm:Rubin}
Let $d\ge 2$ and $0<s<d/2$. Then
\begin{equation}\label{Rubin}
\left(\int_{\R^d} \abs{\varphi (x)}^r\abs{x}^{-\beta r}\dif x\right)^\frac{1}{r}\le C(d,s,r,\beta)\Vert \varphi\Vert _{\dot{H}^{s}(\R^d)}
\qquad\forall\varphi\in \dot{H}_{\mathrm{rad}}^{s}(\R^d),
\end{equation}
where $r\ge 2$ and
\begin{equation}\label{Rubin-1}
-(d-1)\Big(\frac{1}{2}-\frac{1}{r}\Big)\le\beta<\frac{d}{r},
\end{equation}
\begin{equation}\label{Rubin-2}
\frac{1}{r}=\frac{1}{2}+\frac{\beta-s}{d}.
\end{equation}
\end{thm}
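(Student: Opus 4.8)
The plan is to reduce \eqref{Rubin} to a weighted estimate for a one-dimensional integral operator with a homogeneous kernel, and to read the range \eqref{Rubin-1}--\eqref{Rubin-2} off the convergence of the resulting Schur-type integrals. Since $0<s<d/2$, every $\varphi\in\dot H^s(\R^d)$ is a Riesz potential, $\varphi=c\,I_sf$ with $f=(-\Delta)^{s/2}\varphi\in L^2(\R^d)$, and $\varphi$ is radial precisely when $f$ is; so \eqref{Rubin} is equivalent to the one-weight Stein--Weiss bound $\bignorm{\abs{x}^{-\beta}I_sf}_{L^r(\R^d)}\lesssim\norm{f}_{L^2(\R^d)}$ for radial $f$, the point being that the classical Stein--Weiss inequality would force $\beta\ge0$ whereas radial symmetry pushes the admissible range down to $\beta\ge-(d-1)(\tfrac12-\tfrac1r)$. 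Writing $f(x)=g(\abs{x})$ and integrating over spheres, $I_sf$ is again radial, $I_sf(x)=c\int_0^\infty k(\abs{x},\sigma)\,g(\sigma)\,\sigma^{d-1}\dif\sigma$ with $k(\varrho,\sigma)=\int_{S^{d-1}}\abs{\varrho e_1-\sigma\omega}^{-(d-s)}\dif\omega$, which is symmetric in $(\varrho,\sigma)$ and homogeneous of degree $s-d$. Folding the weight $\abs{x}^{-\beta}$ and the Jacobians into the kernel, \eqref{Rubin} becomes the boundedness $L^2(0,\infty)\to L^r(0,\infty)$ of the operator with kernel $\widetilde k(\varrho,\sigma)=\varrho^{(d-1)/r-\beta}\,\sigma^{(d-1)/2}\,k(\varrho,\sigma)$, and one checks immediately that \eqref{Rubin-2} is exactly the requirement that $\widetilde k$ be homogeneous of the scale-invariant degree $-(\tfrac12+\tfrac1r)$.

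The next step is the size analysis of $k$, which by homogeneity reduces to $k(1,\cdot)$: one has $k(1,\sigma)\simeq1$ for $\sigma\le\tfrac12$ and $k(1,\sigma)\simeq\sigma^{s-d}$ for $\sigma\ge2$, while near the diagonal, from the elementary behaviour of $\int_0^\pi\sin^{d-2}\theta\,(1-2\sigma\cos\theta+\sigma^2)^{-(d-s)/2}\dif\theta$, one gets $k(1,\sigma)\simeq\abs{1-\sigma}^{s-1}$ when $s<1$, $k(1,\sigma)\simeq\log\tfrac1{\abs{1-\sigma}}$ when $s=1$, and $k(1,\cdot)$ bounded when $s>1$. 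I would then split the operator according to whether $\sigma\simeq\varrho$ or not. On the off-diagonal part the kernel has genuine power behaviour in $\sigma/\varrho$, so it is handled by Schur's test against power test functions (equivalently, by weighted one-dimensional Hardy inequalities); the two Schur integrals, over $\sigma\in(0,\tfrac12)$ and over $\sigma\in(2,\infty)$, converge exactly under $\beta<d/r$ (integrability of the weight near the origin, which governs the $\sigma\to0$ regime) and $\beta\ge-(d-1)(\tfrac12-\tfrac1r)$ (the $\sigma\to\infty$ tail, where the decay $\sigma^{s-d}$ of $k$ must overcome the Jacobian growth $\sigma^{(d-1)/2}$ after using \eqref{Rubin-2}) --- that is, under exactly \eqref{Rubin-1}. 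The diagonal part, for $s<1$, is after extraction of the weight a localized one-dimensional fractional integral of order $s$; combined with the power weight and the target exponent $r$, it is controlled by a weighted one-dimensional Hardy--Littlewood--Sobolev estimate.

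I expect two points to carry the real difficulty. The first is the diagonal singularity of $k$ when $s\le1$: there pointwise kernel bounds do not suffice and the diagonal piece must be treated as a genuine fractional integral, so one must localize it carefully and verify that, together with the weight, it lands in $L^r$; for $s$ small this forces one out of the reach of Schur's test and into a weak-type-plus-real-interpolation argument, which is where the hypothesis $r\ge2$ is genuinely used. The second is the inclusion of the endpoint $\beta=-(d-1)(\tfrac12-\tfrac1r)$: there the relevant integral is only borderline convergent, and boundedness at the endpoint has to be recovered by a Lorentz-space refinement or by interpolating between two interior cases, again exploiting $r\ge2$. An alternative route, closer to Rubin's original argument, bypasses the spatial kernel and works on the Fourier side: writing $\widehat\varphi(\xi)=F(\abs{\xi})$, one uses the Fourier--Bessel representation of $\varphi$ together with $\abs{J_\nu(t)}\lesssim\min(t^\nu,t^{-1/2})$, splits the frequency integral at $\abs{\xi}\simeq\abs{x}^{-1}$, and estimates the two pieces by Cauchy--Schwarz against $\rho^{2s+d-1}$; this replaces the kernel estimates by an oscillatory-integral bound but produces the same constraints on $\beta$.
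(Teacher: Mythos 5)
The paper does not prove this statement: Theorem~\ref{thm:Rubin} is quoted from the literature (Rubin; De N\'apoli--Drelichman--Dur\'an, Theorem 1.2), so there is no internal proof to compare against. Your outline does follow the standard route of those references --- reduction to a one-dimensional operator with the spherically averaged kernel $k(\varrho,\sigma)$, the size estimates for $k(1,\cdot)$ including the off-diagonal factor $(\varrho\sigma)^{-(d-1)/2}$ and the diagonal singularity $\abs{1-\sigma}^{s-1}$ (these are exactly the bounds \eqref{AS} recorded in the paper's Appendix~A), homogeneity of degree $-(\tfrac12+\tfrac1r)$ encoding \eqref{Rubin-2}, and a Young/Hardy analysis of the resulting convolution on the multiplicative half-line, with Rubin's Fourier--Bessel argument correctly identified as the alternative. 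The architecture is sound.

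There is, however, a concrete error in the step where you claim to ``read off'' \eqref{Rubin-1} from the two off-diagonal Schur integrals, and it shows the computation was not actually carried out. After the logarithmic change of variables the operator is convolution with $\kappa_0(t)=e^{-t/2}\,\widetilde k(1,e^{-t})$, to be placed in $L^p$ with $\tfrac1p=\tfrac12+\tfrac1r$ (this, not the endpoint analysis, is where $r\ge 2$ enters: it is the condition $p\ge 1$, i.e.\ $q\ge p$ for a convolution operator). In the regime $\sigma\ll\varrho$ one finds $\kappa_0\simeq(\sigma/\varrho)^{d/2}$, which is integrable with \emph{no} condition; in the regime $\sigma\gg\varrho$ one finds $\kappa_0\simeq(\sigma/\varrho)^{s-d/2}$, whose integrability is $s<\tfrac d2$, which under \eqref{Rubin-2} is precisely $\beta<\tfrac dr$. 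So neither tail produces the lower bound on $\beta$. That lower bound is a \emph{diagonal} phenomenon: under \eqref{Rubin-2} one has $-(d-1)(\tfrac12-\tfrac1r)=-s(d-1)$, and the condition $\beta\ge -s(d-1)$ is equivalent to $\tfrac1r\ge\tfrac12-s$, i.e.\ to $(s-1)p\ge -1$, which is exactly local $L^p$-integrability (resp.\ weak-$L^p$ membership at the endpoint) of the diagonal singularity $\abs{t}^{s-1}$ of $\kappa_0$ when $s<1$. Equivalently, on the necessity side it is detected by thin spherical shells ($f$ supported on $\{\bigabs{\abs{x}-1}<S\}$, $S\to0$) --- the same profiles the paper uses in Section~\ref{s-radial-opt} --- not by any behaviour as $\sigma\to\infty$. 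Your outline thus assigns the two constraints of \eqref{Rubin-1} to the wrong parts of the kernel; once this is corrected the plan goes through along the lines of De N\'apoli--Drelichman--Dur\'an, with the endpoint $\beta=-(d-1)(\tfrac12-\tfrac1r)$ handled by the weak form of Young's inequality as you anticipate.
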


\begin{remark}
The difference with the classical (non-radial) Stein--Weiss estimate \cite{SteinWeiss1958}
is only in the extended range \eqref{Rubin-1} for $\beta$
(in the non-radial case we must have $0\le\beta<\frac{d}{r}$).
Note special cases of \eqref{Rubin}:
\begin{enumerate}[i)]
\item When $\beta=s$ and $s<\frac{d}{2}$ we obtain $r=2$ which gives the Hardy inequality:
\begin{equation*}\label{DeNapoli-Hardy}
\left(\int_{\R^d} \abs{\varphi(x)}^2\abs{x}^{-2s}\dif x\right)^\frac{1}{2}\lesssim \Vert \varphi\Vert _{\dot{H}^{s}(\R^d)}\qquad\forall \varphi\in \dot{H}_{\mathrm{rad}}^{s}(\R^d),
\end{equation*}

\item  When $\beta=0$ and $s<\frac{d}{2}$ we obtain $r=\frac{2d}{d-2s}$ which gives the Sobolev estimate:
\begin{equation*}\label{DeNapoli-Sobolev}
\left(\int_{\R^d} \abs{\varphi}^\frac{2d}{d-2s}\right)^{\frac{1}{2}-\frac{s}{d}}\lesssim \Vert \varphi\Vert _{\dot{H}^{s}(\R^d)}\qquad\forall \varphi\in \dot{H}_{\mathrm{rad}}^{s}(\R^d),
\end{equation*}

\item  When $\beta=-(d-1)\big(\frac{1}{2}-\frac{1}{r}\big)$ and $s<\frac{1}{2}$ we see from \eqref{Rubin-2} that $r=\frac{2}{1-2s}$ and hence $\beta=-(d-1)s$,
so we obtain a ``limiting'' inequality
\begin{equation*}\label{DeNapoli-radial}
\left(\int_{\R^d} \abs{\varphi}^\frac{2}{1-2s}\abs{x}^\frac{2s(d-1)}{1-2s}\dif x\right)^{\frac{1}{2}-s}\lesssim \Vert \varphi\Vert _{\dot{H}^{s}(\R^d)}\qquad\forall \varphi\in \dot{H}_{\mathrm{rad}}^{s}(\R^d).
\end{equation*}
\end{enumerate}

\end{remark}

A corollary of Rubin's inequality is an integral replacement of the Cho--Ozawa bound \eqref{Strauss}.

\begin{lem}[Weak Ni's inequality]\label{cstep0}
Let $d\ge 2$, $0<s\le 1/2$ and $\frac{1}{2}-s\leq \frac{1}{p}\leq \frac{1}{2}-\frac{s}{d}$.
Then for $R>0$,
\begin{equation}\label{e-weakNI}
\int_{\R^d\setminus B_R(0)} \vert \varphi\vert^p\leq C(d,s,p) R^{d-p\left(\frac{d}{2}-s\right)} \Vert \varphi\Vert _{\dot{H}_{\textrm{rad}}^s(\R^d)}^p
\qquad\forall \varphi\in \dot{H}_{\mathrm{rad}}^{s}(\R^d).
\end{equation}
\end{lem}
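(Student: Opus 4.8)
The plan is to obtain \eqref{e-weakNI} as a direct consequence of Rubin's inequality (Theorem~\ref{thm:Rubin}), by inserting a power weight that is harmless on the complement of a ball. Given $p$ with $\frac{1}{2}-s\le\frac{1}{p}\le\frac{1}{2}-\frac{s}{d}$ (so that $p<\infty$ automatically whenever $s<\frac12$, which is the relevant case; for $s=\frac12$ the left-hand side of \eqref{e-weakNI} is understood with $p<\infty$), I would apply \eqref{Rubin} with the choice
\[
 r=p,\qquad \beta=\frac{d}{p}-\frac{d}{2}+s.
\]
First I would verify that this pair is admissible for Theorem~\ref{thm:Rubin}. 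Relation \eqref{Rubin-2} holds by construction. The upper bound in \eqref{Rubin-1}, namely $\beta<\frac{d}{r}=\frac{d}{p}$, reduces to $s<\frac{d}{2}$, which is true since $0<s\le\frac12<\frac{d}{2}$ for $d\ge2$. The lower bound $-(d-1)\big(\frac12-\frac1r\big)\le\beta$ rearranges, after cancelling the terms in $\frac1p$, precisely to the hypothesis $\frac12-s\le\frac1p$. Finally $r=p\ge2$ because $\frac1p\le\frac12-\frac{s}{d}<\frac12$. I expect this elementary admissibility check to be the only point requiring any care; there is essentially no genuine obstacle once the right parameters are guessed.

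Once the parameters are fixed, the hypothesis $\frac1p\le\frac12-\frac{s}{d}$ forces $\beta\le0$, hence the exponent $-\beta r$ of the weight $\abs{x}^{-\beta r}$ occurring in \eqref{Rubin} is nonnegative. Therefore $\abs{x}^{-\beta r}\ge R^{-\beta r}$ on $\R^d\setminus B_R(0)$, and since restricting the domain of integration only decreases a nonnegative integrand,
\[
 R^{-\beta r}\int_{\R^d\setminus B_R(0)}\abs{\varphi}^r
 \le \int_{\R^d\setminus B_R(0)}\frac{\abs{\varphi}^r}{\abs{x}^{\beta r}}
 \le \int_{\R^d}\frac{\abs{\varphi}^r}{\abs{x}^{\beta r}}
 \le C(d,s,r,\beta)^r\,\norm{\varphi}_{\dot H^s(\R^d)}^r .
\]
Dividing by $R^{-\beta r}$ and using that $\beta r=\beta p=d-p\big(\frac{d}{2}-s\big)$ yields exactly \eqref{e-weakNI}, with $C(d,s,p)=C(d,s,p,\beta)^p$.

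Finally I would record that the two extreme values of $p$ are included in this argument: for $\frac1p=\frac12-\frac{s}{d}$ one has $\beta=0$, so \eqref{e-weakNI} is just Rubin's Sobolev-type endpoint read off the set $\R^d\setminus B_R(0)$; and for $\frac1p=\frac12-s$, which requires $s<\frac12$ and forces $p=\frac{2}{1-2s}$, the value of $\beta$ lies exactly on the left endpoint of the admissible range \eqref{Rubin-1}, which Theorem~\ref{thm:Rubin} still permits in the radial case. No rearrangement or compactness input is needed: the whole proof is the single displayed chain of inequalities above together with the arithmetic verification of the parameters.
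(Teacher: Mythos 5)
Your proof is correct and is exactly the paper's argument: the paper's one-line proof sets $r=p$ and $\beta=\frac{2d-p(d-2s)}{2p}$ in Rubin's inequality, which coincides with your choice $\beta=\frac{d}{p}-\frac{d}{2}+s$, and your admissibility checks and the final weight comparison on $\R^d\setminus B_R(0)$ fill in precisely the details the paper leaves implicit.
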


\begin{proof}
Follows from Rubin's inequality \eqref{Rubin} by setting $r=p$ and $\beta= \frac{2d-p(d-2s)}{2p}$.
\end{proof}

Using \eqref{Ruiz-ext}, \eqref{DeNapoli-ineq} and \eqref{Rubin} in the exterior
and the classical Sobolev inequality in the interior of a ball we deduce the following.

\begin{prop}\label{prop-radial-ext}
Let $d\ge 2$, $s>0$, $1<\alpha<d$ and $\big(\frac{d-2s}{d+\alpha}\big)_+<\frac{1}{q}\le 1$.
Then the space $\mathcal{E}^{s, \alpha, q}_{\mathrm{rad}}(\R^d)$ is continuously embedded into $L^p(\R^d)$ for
\begin{align}
p\in\Big(p_{\mathrm{rad}},\frac{2d}{d-2s}\Big]&\quad\text{and}\quad s<\frac{d}{2},\\
p>p_{\mathrm{rad}}&\quad\text{and}\quad s\ge\frac{d}{2}.
\end{align}
\end{prop}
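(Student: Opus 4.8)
The plan is the standard ``inside/outside a ball'' splitting. Fix $R>0$ and write $\norm{\varphi}_{L^p(\R^d)}^p=\int_{B_R(0)}\abs{\varphi}^p+\int_{\R^d\setminus B_R(0)}\abs{\varphi}^p$, estimate the interior piece using only the Sobolev inequality, estimate the exterior piece by combining Ruiz's weighted Coulomb estimate \eqref{Ruiz-ext} with a radial decay bound, and optimise over $R$ at the end. On $B_R(0)$, since $p\le\frac{2d}{d-2s}$ when $s<\frac d2$ (for $s\ge\frac d2$ one integrates the pointwise bound below instead), H\"older's inequality on the bounded set followed by the fractional Sobolev embedding $\norm{\varphi}_{L^{2d/(d-2s)}(\R^d)}\lesssim\norm{\varphi}_{\dot H^s(\R^d)}$ gives
\[
  \int_{B_R(0)}\abs{\varphi}^p\lesssim R^{\,d-\frac{p(d-2s)}{2}}\,\norm{\varphi}_{\dot H^s(\R^d)}^p,
\]
and the exponent $d-\tfrac{p(d-2s)}{2}$ is nonnegative on the admissible range, vanishing only at $p=\tfrac{2d}{d-2s}$.

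For the exterior piece I would write $\abs{\varphi}^p=\abs{\varphi}^{p-q}\abs{\varphi}^q$ (legitimate since $p>p_{\mathrm{rad}}>q$ on this range) and pull out a supremum:
\[
  \int_{\R^d\setminus B_R(0)}\abs{\varphi}^p\le\Bigl(\sup_{\abs{x}\ge R}\abs{x}^{\sigma}\abs{\varphi(x)}\Bigr)^{p-q}\int_{\R^d\setminus B_R(0)}\frac{\abs{\varphi(x)}^q}{\abs{x}^{\sigma(p-q)}}\dif x.
\]
The crucial point is the choice of the decay exponent $\sigma$: bounding the supremum by the Cho--Ozawa estimate \eqref{Strauss}, i.e.\ taking $\sigma=\tfrac{d-2s}{2}$, only recovers the non-radial range, so to reach $p_{\mathrm{rad}}$ one needs a \emph{faster} radial decay, and this extra decay is extracted from the Coulomb term. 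For $s>\tfrac12$ I would apply a localised form of de N\'apoli's inequality (Theorem~\ref{thm:denapoli}) on $\{\abs{x}\ge R/2\}$ with exponent $r=q$ and weight $a=-\sigma(p-q)$; a short computation forces $\sigma=\tfrac{2s(d-1)}{(2s-1)p+2}$ and turns the auxiliary weighted norm into $\bigl(\int_{\R^d\setminus B_{R/2}}\abs{\varphi}^q\abs{x}^{-\sigma(p-q)}\bigr)^{1/q}$. Since $\sigma(p-q)$ exceeds $\tfrac{d-\alpha}{2}$ precisely when $p>p_{\mathrm{rad}}$ (this is exactly the simplification of the inequality $4s(d-1)(p-q)>(d-\alpha)[(2s-1)p+2]$), this norm is too singular at the origin for a global estimate but, on the annular region, is controlled by \eqref{Ruiz-ext} with a gain $R^{-\eps}$, $\eps=\sigma(p-q)-\tfrac{d-\alpha}{2}>0$; together with the second application of \eqref{Ruiz-ext} to the remaining factor this yields
\[
  \int_{\R^d\setminus B_R(0)}\abs{\varphi}^p\lesssim R^{-\delta}\,\norm{\varphi}_{\dot H^s(\R^d)}^{\,\theta(p-q)}\biggl(\iint_{\R^d\times\R^d}\frac{\abs{\varphi(x)}^q\abs{\varphi(y)}^q}{\abs{x-y}^{d-\alpha}}\dif x\dif y\biggr)^{\!\rho}
\]
for $\theta=\tfrac{2}{(2s-1)q+2}$, some $\delta>0$ and some $\rho>0$.

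When $s\le\tfrac12$ no pointwise radial bound is available, so I would replace de N\'apoli by Rubin's inequality (Theorem~\ref{thm:Rubin}), whose \emph{extended} range of negative weights $\beta$ (the feature distinguishing it from the classical Stein--Weiss estimate) now plays the role of the enhanced decay. Here the exterior integral is split by H\"older's inequality between the Coulomb quantity $\int_{\R^d\setminus B_R}\abs{\varphi}^q\abs{x}^{-\frac{d-\alpha}{2}-\eps}$, estimated by \eqref{Ruiz-ext}, and a weighted integral $\int_{\R^d}\abs{\varphi}^r\abs{x}^{-\beta r}$ with $\beta<0$, estimated by \eqref{Rubin}; the weights are forced to balance and the resulting admissibility is again exactly $p\in(p_{\mathrm{rad}},\tfrac{2d}{d-2s}]$. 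The excluded lower endpoint $p=p_{\mathrm{rad}}$ corresponds to Rubin's extremal weight $\beta=-(d-1)(\tfrac12-\tfrac1r)$, equivalently $r=\tfrac{2}{1-2s}$ (the exponent highlighted in the introduction), together with the degeneration $\eps\to0$ of \eqref{Ruiz-ext}, while the upper endpoint $p=\tfrac{2d}{d-2s}$ is just Rubin's radial Sobolev estimate with no Coulomb contribution; the case $q=1$ (where $r=q$ is inadmissible in de N\'apoli) is dealt with separately, noting that $\mathcal{E}^{s,\alpha,1}_{\mathrm{rad}}(\R^d)=\dot H^s_{\mathrm{rad}}(\R^d)\cap\dot H^{-\alpha/2}_{\mathrm{rad}}(\R^d)$ and invoking the radial Stein--Weiss estimates directly.

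Combining the two pieces, for every $R>0$ one obtains a bound of the shape $R^{a_1}A+R^{-\delta}B$ with $a_1\ge0$, $\delta>0$, $A=\norm{\varphi}_{\dot H^s(\R^d)}^p$ and $B$ a product of a power of $\norm{\varphi}_{\dot H^s(\R^d)}$ and a power of the Coulomb energy; minimising over $R>0$ (or letting $R\to\infty$ when $a_1=0$, i.e.\ at $p=\tfrac{2d}{d-2s}$, where this simply reproduces the Sobolev inequality) produces a finite bound $\norm{\varphi}_{L^p(\R^d)}\lesssim\norm{\varphi}_{\mathcal{E}^{s,\alpha,q}(\R^d)}$, which is the asserted embedding. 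The only non-routine step, and the main obstacle, is the exterior estimate: one must choose the auxiliary weighted norm so that (i) the decay exponent $\sigma$ it produces is large enough, (ii) it meets the admissibility conditions of de N\'apoli's, respectively Rubin's, inequality, and (iii) it is itself controlled by the Coulomb energy via \eqref{Ruiz-ext}; the simultaneous solvability of these three requirements is what forces, after some algebra, the exact value of $p_{\mathrm{rad}}$ and the strictness $p>p_{\mathrm{rad}}$, and it is also where one sees why the threshold $\alpha=1$ and the exponent $q=\tfrac{2}{1-2s}$ are distinguished.
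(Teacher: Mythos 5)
Your plan is essentially the paper's proof: interior by Sobolev on $B_R(0)$, exterior by extracting radial decay (de N\'apoli for $s>\frac12$, Rubin for $s\le\frac12$) and feeding the leftover weighted integral into Ruiz's estimate \eqref{Ruiz-ext}, then optimising in $R$; the algebra identifying $p_{\mathrm{rad}}$ and the role of $r=\frac{2}{1-2s}$ is the same, and your identity $4s(d-1)(p-q)=(d-\alpha)[(2s-1)p+2]$ at $p=p_{\mathrm{rad}}$ checks out.

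The one step you should repair is the invocation of ``a localised form of de N\'apoli's inequality on $\{\abs{x}\ge R/2\}$'': Theorem~\ref{thm:denapoli} is a global inequality, and with your self-consistent weight $a=-\sigma(p-q)$, $\sigma(p-q)>\frac{d-\alpha}{2}$, the global weighted norm $\int_{\R^d}\abs{\varphi}^q\abs{x}^{-\sigma(p-q)}\dif x$ genuinely cannot be controlled by the Coulomb energy (it is supercritical at the origin, as you note), so the localisation is doing real work and is not available off the shelf. The paper sidesteps this by fixing the weight at $\gamma=\frac{d-\alpha}{2}+\eps$ (only slightly supercritical), applying the \emph{global} de N\'apoli inequality with $a=-\gamma$ — which is admissible precisely when $p>p_{\mathrm{rad}}$ and $\eps$ is small — and then splitting the resulting global integral $\int_{\R^d}\abs{\varphi}^q\abs{x}^{-\gamma}\dif x$ into the exterior part (Ruiz, \eqref{Ruiz-ext}) and the interior part, the latter absorbed via the pointwise Cho--Ozawa bound \eqref{Strauss}, which makes $\int_{B_R(0)}\abs{x}^{-\frac{d-2s}{2}q-\gamma}\dif x$ finite exactly because $q<\frac{d+\alpha}{d-2s}$. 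Adopting that perturbative choice of weight closes the gap without changing anything else in your argument; the $s\le\frac12$ case you describe already coincides with the paper's.
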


\begin{proof}
It is sufficient to establish continuous embedding $\mathcal{E}^{s, \alpha, q}_{\mathrm{rad}}(\R^d)\hookrightarrow L^p(\R^d)$
only for $p$ in a small \emph{right} neighbourhood of $p_{\mathrm{rad}}$, the remaining values of $p$ are then covered by interpolation via Theorem~\ref{lem:gen}.
Given $R>0$, we shall estimate the $L^p$--norm of a function $\varphi \in \mathcal{E}^{s, \alpha, q}_{\mathrm{rad}}(\R^d)$ separately in the interior and exterior of the ball $B_R(0)$.
Since $p<\frac{2d}{d-2s}$, in the interior of the ball $B_R(0)$ we estimate by Sobolev inequality
\begin{equation*}\label{e-int}
\int_{B_R(0)} \vert \varphi\vert^p\le CR^{1-p\left(\frac{1}{2}-\frac{s}{d}\right)}\Vert \varphi\Vert _{\dot{H}^{s}(\R^d)}^{p}.
\end{equation*}

The estimate in the exterior of the ball $B_R(0)$ will be split into the cases $s> 1/2$ and $s\le 1/2$.
Observe that $p>p_{\mathrm{rad}}>q$, since $q<\frac{d+\alpha}{d-2s}$.
For a small $\eps>0$, denote
$$\gamma:=\frac{d-\alpha}{2}+\eps.$$

\smallskip\noindent
\emph{Case $s>1/2$.}
Using successively the inequalities \eqref{DeNapoli-ineq}, \eqref{Ruiz-ext} and \eqref{Strauss}, we estimate
\begin{multline}\label{e-int-larges}
\int_{\R^d\setminus B_R(0)} \vert \varphi\vert^p
\le
\sup_{\vert x\vert >R}\Big(\vert \varphi(x)\vert \,\vert x\vert^\frac{\gamma}{p-q}\Big)^{p-q}
\int_{\R^d\setminus B_R(0)}\frac{\vert \varphi (x)\vert^{q}}{\vert x\vert^\gamma}\dif x
\\
\lesssim
\Vert \varphi\Vert _{\dot{H}^{s}(\R^d)}^{\theta(p-q)}
\left(\int_{\R^d}\frac{\vert \varphi (x)\vert^{q}}{\vert x\vert^\gamma}\dif x\right)^{\frac{(1-\theta)(p-q)}{q}}
\int_{\R^d\setminus B_R(0)}\frac{\vert \varphi (x)\vert^{q}}{\vert x\vert^\gamma}\dif x
\\
\lesssim
\Vert \varphi\Vert _{\dot{H}^{s}(\R^d)}^{\theta(p-q)}
\biggl(\frac{1}{R^{2\eps}}\iint_{\R^N}\frac{\vert \varphi(x)\vert^{q}\,\vert \varphi(y)\vert^{q}}{\vert x-y\vert^{d-\alpha}}\dif x\dif y\biggr)^{\frac{1}{2}+\frac{(1-\theta)(p-q)}{2q}}
\\
\qquad+
\Vert \varphi\Vert _{\dot{H}^{s}(\R^d)}^{p-q}
\biggl(\frac{1}{R^{2\eps}}\iint_{\R^N}\frac{\vert \varphi(x)\vert^{q}\,\vert \varphi(y)\vert^{q}}{\vert x-y\vert^{d-\alpha}}\dif x\dif y\biggr)^{\frac{1}{2}}
\biggl(\int_{B_R(0)}\vert x\vert^{-\frac{d-2s}{2}q-\gamma}\dif x\biggr)^{\frac{(1-\theta)(p-q)}{q}},
\end{multline}
where $\theta=\frac{2}{(2s-1)q+2}$. The application of \eqref{DeNapoli-ineq} requires that
\begin{equation}\label{e-deNap-prad}
\frac{\gamma}{p-q}\le \sigma=\frac{2s(d-1-\gamma)+\gamma}{(2s-1)q+2},
\end{equation}
which is fulfilled for a sufficiently small $\eps>0$ if $p>p_{\mathrm{rad}}$.
The last integral in \eqref{e-int-larges} is finite when
\begin{equation}\label{e-deNap-q}
-\frac{d-2s}{2}q-\gamma<-d;
\end{equation}
this is the case for a sufficiently small $\eps>0$ when $q<\frac{d+\alpha}{d-2s}$.

\smallskip\noindent
\emph{Case $s\le 1/2$.}
Let $r>p>q$ and $\theta\in[0,1]$ be such that $\frac{\theta}{q}+\frac{1-\theta}{r}=\frac{1}{p}$, i.e.
$\theta=\frac{q}{p}\frac{r-p}{r-q}$.
By the H\"older inequality together with \eqref{Ruiz-ext} and \eqref{Rubin}, we estimate
\begin{equation}\label{e-ext-smalls}
\begin{split}
\int_{\R^d\setminus B_R(0)} \vert \varphi\vert^p&\le
\biggl(\int_{\R^d\setminus B_R(0)}\vert \varphi(x)\vert^{r}\vert x\vert^{\gamma\frac{r-p}{p-q}}\dif x\biggr)^\frac{p-q}{r-q}
\biggl(\int_{\R^d\setminus B_R(0)}\frac{\vert \varphi(x)\vert^{q}}{\vert x\vert^\gamma}\dif x\biggr)^\frac{r-p}{r-q}
\\
&\lesssim
\biggl(\int_{\R^d\setminus B_R(0)}\vert \varphi(x)\vert^{r}\vert x\vert^{-r\beta}\dif x\biggr)^\frac{p-q}{r-q}
\biggl(\frac{1}{R^{2\eps}}\iint_{\R^N}\frac{\vert \varphi(x)\vert^{q}\,\vert \varphi(y)\vert^{q}}{\vert x-y\vert^{d-\alpha}}\dif x\dif y\biggr)^{\frac{1}{2}\frac{r-p}{r-q}}\\
&\lesssim
\Vert \varphi\Vert _{\dot{H}^{s}(\R^d)}^{r\frac{p-q}{r-q}}
\biggl(\frac{1}{R^{2\eps}}\iint_{\R^N}\frac{\vert \varphi(x)\vert^{q}\,\vert \varphi(y)\vert^{q}}{\vert x-y\vert^{d-\alpha}}\dif x\dif y\biggr)^{\frac{1}{2}\frac{r-p}{r-q}},
\end{split}
\end{equation}
where in view of \eqref{Rubin-2} we must express $r$ and $\beta$ as
\begin{equation*}\label{e-sys-res}
r=\frac{2(\gamma p-d(p-q))}{2\gamma-(d-2s)(p-q)},
\qquad \beta=\frac{1}{2}\frac{\gamma(2d-p(d-2s))}{\gamma p-d(p-q)}.
\end{equation*}
Note that $\beta<0$ for sufficiently small $\eps>0$, since $q<\frac{d+\alpha}{d-2s}$ and $p<\frac{2d}{d-2s}$.
Hence \eqref{Rubin-1} requires
\begin{equation*}
\beta\ge-\frac{d-1}{2}\frac{\gamma(p-2)-2s(p-q)}{\gamma p-d(p-q)}.
\end{equation*}
The latter is satisfied provided that
\begin{equation}\label{e-pgamma}
p\ge p_\eps:=2\frac{qs(d-1)+\gamma}{2s(d-1)+\gamma(1-2s)},
\end{equation}
where $p_\eps\searrow p_{\mathrm{rad}}$ as $\eps\to 0$.
In addition, observe that $r\nearrow\frac{2}{1-2s}$ as $p=p_\eps$ and $\eps\to 0$,
which in particular, ensures that we can choose $r>p$ and $r>2$ in \eqref{Rubin}.
We conclude that \eqref{e-ext-smalls} holds for $p>p_{\mathrm{rad}}$, provided that $\eps>0$ is sufficiently small.
\end{proof}

\begin{prop}\label{prop-radial-int}
Let $d\ge 2$, $0<s<\frac{d}{2}$, $1<\alpha<d$ and $\frac{d+\alpha}{d-2s}<q<\infty$.
Then the space $\mathcal{E}^{s, \alpha, q}_{\mathrm{rad}}(\R^d)$ is continuously embedded into $L^p(\R^d)$ for
\begin{align}
p\in\Big[\frac{2d}{d-2s},p_{\mathrm{rad}}\Big)&\quad\text{and}\quad \frac{1}{q}\neq \frac{1}{2}-s,\\
p\in\Big[\frac{2d}{d-2s},p_{\mathrm{rad}}\Big]&\quad\text{and}\quad \frac{1}{q}=\frac{1}{2}-s.
\end{align}
\end{prop}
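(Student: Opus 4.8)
The plan is to run an argument parallel to the proof of Proposition~\ref{prop-radial-ext}, interchanging the roles of the interior and the exterior of a ball and using Ruiz's \emph{interior} estimate \eqref{Ruiz-int} in place of the exterior one \eqref{Ruiz-ext}. Since now $q>\frac{d+\alpha}{d-2s}$ and $p\ge\frac{2d}{d-2s}$, the obstruction sits near the origin, where radial functions in $\dot H^s(\R^d)$ may blow up, rather than at infinity. By Theorem~\ref{lem:gen} and $L^p$-interpolation it is enough to prove the embedding $\mathcal{E}^{s,\alpha,q}_{\mathrm{rad}}(\R^d)\hookrightarrow L^p(\R^d)$ for $p$ in a small \emph{left} neighbourhood of $p_{\mathrm{rad}}$ when $\frac1q\ne\frac12-s$, and for $p=p_{\mathrm{rad}}=q$ in the exceptional case $\frac1q=\frac12-s$; the remaining $p\in[\frac{2d}{d-2s},p_{\mathrm{rad}})$ are then filled in by interpolating with the Sobolev-type embedding into $L^{2d/(d-2s)}(\R^d)$. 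Once the embedding is established, the scaling invariant inequality \eqref{e-main-radial} follows from the dilation argument already used in the proof of Theorem~\ref{maint}, so it suffices to bound $\norm{\varphi}_{L^p(\R^d)}$ by the Coulomb--Sobolev norm. Fix $R>0$ and split $\int_{\R^d}\abs{\varphi}^p=\int_{B_R(0)}\abs{\varphi}^p+\int_{\R^d\setminus B_R(0)}\abs{\varphi}^p$.

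The exterior term is the easy one, because $p>\frac{2d}{d-2s}$ forces sufficiently fast decay at infinity. For $s>\frac12$ one uses the Cho--Ozawa bound \eqref{Strauss}, so that $\int_{\R^d\setminus B_R(0)}\abs{\varphi}^p\lesssim R^{d-\frac{d-2s}{2}p}\norm{\varphi}_{\dot H^s(\R^d)}^p$ with a negative power of $R$. For $s\le\frac12$ and $\frac{2d}{d-2s}\le p\le\frac{2}{1-2s}$ --- in particular in the endpoint case $\frac1q=\frac12-s$, where $p=q=\frac{2}{1-2s}$ --- one applies the weak Ni inequality \eqref{e-weakNI} directly; for $s\le\frac12$ and $p>\frac{2}{1-2s}$, which forces $q>\frac{2}{1-2s}$, one combines Ruiz's exterior estimate \eqref{Ruiz-ext} with Rubin's inequality \eqref{Rubin} by H\"older, in the spirit of \eqref{e-ext-smalls}.

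The interior term is the crux. I would write $\int_{B_R(0)}\abs{\varphi}^p=\int_{B_R(0)}\abs{\varphi}^q\,\abs{\varphi}^{p-q}$. For $s>\frac12$, where $p>q$ because $p_{\mathrm{rad}}>q$, take $\gamma:=\frac{d-\alpha}{2}-\eps$ with $\eps>0$ small and $\sigma:=\frac{\gamma}{p-q}$, and apply De N\'apoli's pointwise bound \eqref{DeNapoli-ineq} with exponent $r=q$ and weight $a=-\gamma$ to control $\sup_{x}\abs{\varphi(x)}\abs{x}^{\sigma}$ by $\norm{\varphi}_{\dot H^s(\R^d)}^{\theta}\norm{\varphi}_{L^q_{-\gamma}(\R^d)}^{1-\theta}$; the weighted norm $\norm{\varphi}_{L^q_{-\gamma}(\R^d)}$ is in turn bounded by Ruiz's interior estimate \eqref{Ruiz-int} near the origin and by the Cho--Ozawa decay at infinity, the latter integrable against $\abs{x}^{-\gamma}$ precisely because $q>\frac{d+\alpha}{d-2s}$. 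The remaining factor $\int_{B_R(0)}\abs{\varphi}^q\abs{x}^{-\gamma}$ is bounded by \eqref{Ruiz-int} and carries a positive power of $R$. The admissibility condition for De N\'apoli, namely $\frac{\gamma}{p-q}\ge\sigma$ with $\sigma=\frac{2s(d-1)-(2s-1)\gamma}{(2s-1)q+2}$, amounts to $\gamma\ge\frac{2s(d-1)(p-q)}{(2s-1)p+2}$, which is compatible with the requirement $\gamma<\frac{d-\alpha}{2}$ of \eqref{Ruiz-int} if and only if $p<p_{\mathrm{rad}}$. For $s\le\frac12$, where no pointwise bound is available, I would instead H\"older-interpolate $\abs{\varphi}^p$ between $\abs{\varphi}^q\abs{x}^{-\gamma}$, bounded by \eqref{Ruiz-int} (again with $\gamma<\frac{d-\alpha}{2}$), and $\abs{\varphi}^r\abs{x}^{-r\beta}$, bounded by Rubin's inequality \eqref{Rubin}, with $r$ and $\beta$ tied by \eqref{Rubin-2}; here the lower constraint $\beta\ge-(d-1)\bigl(\frac12-\frac1r\bigr)$ of \eqref{Rubin-1} --- equivalently $r\le\frac{2}{1-2s}$ when $s<\frac12$ --- reduces in the limit $\eps\to0$ to $p\le p_{\mathrm{rad}}$, and since $\eps>0$ is mandatory for \eqref{Ruiz-int} this gives $p<p_{\mathrm{rad}}$. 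The endpoint $p=p_{\mathrm{rad}}$ is reached only when $q=\frac{2}{1-2s}$, in which case $p_{\mathrm{rad}}=q$ and $\int_{B_R(0)}\abs{\varphi}^q\lesssim R^{\frac{d-\alpha}{2}}\bigl(\iint_{\R^d\times\R^d}\frac{\abs{\varphi(x)}^q\,\abs{\varphi(y)}^q}{\abs{x-y}^{d-\alpha}}\dif x\dif y\bigr)^{1/2}$ follows at once from \eqref{boundSupBall}.

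Finally, adding the interior and exterior estimates and minimising over $R>0$ --- the two contributions carry opposite powers of $R$ --- yields a bound of $\norm{\varphi}_{L^p(\R^d)}$ by a product of a power of $\norm{\varphi}_{\dot H^s(\R^d)}$ and a power of the Coulomb energy with the exponents dictated by scaling, which is \eqref{e-main-radial}. I expect the main obstacle to be the interior estimate: one must choose the weight $\gamma$ (equivalently $\eps>0$), and for $s\le\frac12$ also the Rubin exponents $r,\beta$, so that the admissibility hypotheses of De N\'apoli's or Rubin's inequality and of Ruiz's interior estimate hold simultaneously, and then check by a somewhat delicate computation that the resulting restriction is exactly $p<p_{\mathrm{rad}}$ (respectively $p\le p_{\mathrm{rad}}$ when $\frac1q=\frac12-s$). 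A secondary technical nuisance is keeping track of whether $p>q$ or $p<q$ when arranging the H\"older interpolation in the regime $s\le\frac12$.
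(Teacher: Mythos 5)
Your proposal is correct and follows essentially the same route as the paper's proof: the same reduction (via Theorem~\ref{lem:gen} and interpolation) to $p$ in a left neighbourhood of $p_{\mathrm{rad}}$, the same interior/exterior splitting at radius $R$ with optimisation over $R$, and the same case analysis combining Ruiz's weighted estimates with De N\'apoli (for $s>1/2$), Rubin (for $s\le 1/2$), weak Ni in the exterior, and \eqref{boundSupBall} at the endpoint $q=\frac{2}{1-2s}$. The only deviation is in the sub-case $s<1/2$, $q>\frac{2}{1-2s}$, where the paper bounds the interior by the cruder $L^q$-estimate \eqref{boundSupBall} plus H\"older against Lebesgue measure, whereas you propose the Rubin--Ruiz H\"older with $r<p<q$; this also works (the constraint $r\le\frac{2}{1-2s}$ again reduces to $p\le p_{\mathrm{rad}}$) but is the more laborious option.
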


\begin{proof}
Note that for $\frac1q\neq\frac12-s$ it is sufficient to establish continuous embedding $\mathcal{E}^{s, \alpha, q}_{\mathrm{rad}}(\R^d)\hookrightarrow L^p(\R^d)$
only for $p$ in a small \emph{left} neighbourhood of $p_{\mathrm{rad}}$, the remaining values of $p$ are then covered by interpolation via Theorem~\ref{lem:gen}.

Given $R>0$, we shall estimate the $L^p$--norm of a function $\varphi \in \mathcal{E}^{s, \alpha, q}_{\mathrm{rad}}(\R^d)$ separately in the interior and exterior of the ball $B_R(0)$.
The proof will be splitted into a number of separate cases, which we outline in Table~\ref{table1}.

\begin{table}[h!]
\begin{center}
\begin{tabular}{c c  c  c }
\toprule
$s$ & $q$ & $B_R(0)$ & $\R^d\setminus B_R(0)$  \\
\midrule[\heavyrulewidth]
$s>1/2$ &$q>\frac{d+\alpha}{d-2s}$ & De Napoli + Ruiz as in \eqref {e-int-larges} & Sobolev + Cho-Ozawa \eqref{Strauss}\\
\cmidrule(lr){1-4}
\multirow{3}{*}{$s\le 1/2$} & $\frac{d+\alpha}{d-2s}<q<\frac{2}{1-2s}$& Rubin + Ruiz as in \eqref {e-ext-smalls} &Weak Ni \eqref{e-weakNI}\\
\cmidrule(lr){2-4}
 & $q=\frac{2}{1-2s}$ & $L^q$-estimate \eqref{boundSupBall}& Weak Ni \eqref{e-weakNI}\\
 \cmidrule(lr){2-4}
 & $q>\frac{2}{1-2s}$& $L^q$-estimate \eqref{boundSupBall}& Rubin + Ruiz as in \eqref {e-ext-smalls}\\
\bottomrule
 \end{tabular}
\end{center}
\caption{Different cases in the proof of Proposition~\ref{prop-radial-int}}\label{table1}
\end{table}

\smallskip\noindent
\emph{Case $s> 1/2$.}
In the exterior of the ball $B_R(0)$, for any $p>\frac{2d}{d-2s}$ we can estimate
\begin{equation}\label{e-ext}
\int_{\R^d\setminus B_R(0)} \vert \varphi\vert^p\le
CR^{d-p\left(\frac{d}{2}-s\right)}\Vert \varphi\Vert _{\dot{H}^{s}(\R^d)}^{p},
\end{equation}
using the classical Sobolev inequality and Cho--Ozawa's inequality \eqref{Strauss}.
To obtain an estimate in the interior of the ball $B_R(0)$,
we observe that for $s>1/2$ we have $q<p_{\mathrm{rad}}$
and hence we can assume that $q<p<p_{\mathrm{rad}}$.
For a small $\eps>0$, set $\gamma:=\frac{d-\alpha}{2}-\eps$.
Then the estimate on $\int_{B_R(0)} \vert \varphi\vert^p$  is identical to the argument in \eqref{e-int-larges},
but carried out in the interior of the ball $B_R(0)$,
which reverses the inequalities in \eqref{e-deNap-prad} and \eqref{e-deNap-q}.

\smallskip\noindent
\emph{Case $s\le 1/2$ and $\frac{d+\alpha}{d-2s}<q<\frac{2}{1-2s}$.}
In the exterior of the ball $B_R(0)$ the estimate \eqref{e-ext}
follows directly from the weak Ni's inequality \eqref{e-weakNI}.
To obtain an estimate in the interior of the ball $B_R(0)$, observe that for $q<\frac{2}{1-2s}$ we have $q<p_{\mathrm{rad}}$
and hence we can assume that $q<p<p_{\mathrm{rad}}$.
For a small $\eps>0$, set
$\gamma:=\frac{d-\alpha}{2}-\eps$.
Then the estimate on $\int_{B_R(0)} \vert \varphi\vert^p$  is identical to the argument in \eqref{e-ext-smalls},
but carried out in the interior of the ball $B_R(0)$ with $q<p<r$.
The only difference is that for $q>\frac{d+\alpha}{d-2s}$ the inequality in \eqref{e-pgamma}
reverses and that $p_\eps\nearrow p_{\mathrm{rad}}$ as $\eps\to 0$,
since $\gamma<\frac{d-\alpha}{2}$.

Note that for $0<s<1/2$ and $q\ge\frac{2}{1-2s}$ we have $q\ge p_{\mathrm{rad}}$
and a H\"older inequality estimate
of type \eqref{e-ext-smalls} on $\int_{B_R(0)} \vert \varphi\vert^p$  is no longer possible.

\smallskip
\noindent
\emph{Case $s< 1/2$ and $q=\frac{2}{1-2s}$.}
Observe that in this case we have $p_{\mathrm{rad}}=q$.
In the exterior of the ball $B_R(0)$ the estimate
\begin{equation}\label{e-ext-q}
\int_{\R^d\setminus B_R(0)} \vert \varphi\vert^q\le
CR^{d-q\left(\frac{d}{2}-s\right)}\Vert \varphi\Vert _{\dot{H}^{s}(\R^d)}^{q},
\end{equation}
follows directly from the weak Ni's inequality \eqref{e-weakNI}, which is valid for $q=\frac{2}{1-2s}$.
To estimate $\int_{B_R(0)} \vert \varphi\vert^q$, we can use the $L^q$--estimate \eqref{boundSupBall}, i.e.
\begin{equation}
\label{boundSupBalllargeq-2}
\int_{B_R(0)} \abs{\varphi}^q
 \le CR^\frac{d - \alpha}{2}  \biggl(\ \iint_{\R^d\times \R^d}
    \frac{\abs{\varphi (x)}^q\,\abs{\varphi (y)}^q}{\abs{x - y}^{d-\alpha}}
    \dif x \dif y\biggr)^\frac{1}{2}.
\end{equation}
Combining \eqref{e-ext-q} and \eqref{boundSupBalllargeq-2} together we conclude that
$\mathcal{E}^{s, \alpha, q}_{\mathrm{rad}}(\R^d)\hookrightarrow L^{q}(\R^d)$,
the remaining range of $p$ follows by interpolation.

\smallskip
\noindent
\emph{Case $s< 1/2$ and $q>\frac{2}{1-2s}$.}
Observe that in this case $p<p_{\mathrm{rad}}<q$.
To estimate $\int_{B_R(0)} \vert \varphi\vert^p$, we use the $L^q$--estimate \eqref{boundSupBall} to obtain
\begin{equation*}
\label{boundSupBalllargeq-3}
\int_{B_R(0)} \abs{\varphi}^p
\le CR^{\big(1-\frac{p}{q}\big)\frac{d - \alpha}{2}}  \left(\iint_{\R^d\times \R^d}
\frac{\abs{\varphi (x)}^q\,\abs{\varphi (y)}^q}{\abs{x - y}^{d-\alpha}} \dif x \dif y\right)^{\frac{p}{2q}}.
\end{equation*}
To obtain an estimate in the exteriour of the ball $B_R(0)$,
we will use H\"older, Rubin and Ruiz's inequalities similarly to \eqref{e-ext-smalls},
with $\gamma=\frac{d-\alpha}{2}+\eps$ and $r<p<q$,
which could be carried out for $p<p_{\mathrm{rad}}$ provided that $\eps>0$ is sufficiently small, because $p_{\mathrm{rad}}>\frac{2}{1-2s}$.
\end{proof}

\begin{proof}[Proof of Theorem~\ref{thm:radialpartial}]
The scaling invariant inequalities of Theorem~\ref{thm:radialpartial} follow
from Propositions~\ref{prop-radial-ext} and~\ref{prop-radial-int}
by by the same scaling consideration as in the proof of Theorem~\ref{maint}.
\end{proof}

The estimates of Propositions~\ref{prop-radial-ext} and~\ref{prop-radial-int}
improve upon the estimate of Theorem~\ref{lem:gen} only when $\alpha>1$.
In the next section we show that the intervals of Propositions~\ref{prop-radial-ext} and~\ref{prop-radial-int}
are optimal and that for $\alpha\le 1$ there is no improvement for the radial embedding.

\section{Optimality of the radial embeddings}\label{s-radial-opt}
The optimality of the intervals in Theorems~\ref{maint} and~\ref{thm:radialpartial} for $s\le 1$ is a consequence of the following.

\begin{thm}\label{thm:sharp-q}
Let $d\geq 2$, $1<\alpha<d$, $0<s<1/2$ and $q=\frac{2}{1-2s}$.
Then the space $\mathcal{E}^{s, \alpha, q}_{\mathrm{rad}}(\R^d)$ is not continuously embedded into $L^{p}(\R^d)$ for $p>q=p_{\mathrm{rad}}$.
\end{thm}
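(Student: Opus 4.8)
The plan is to exhibit a one-parameter family $(\varphi_\delta)_{0<\delta<1}$ in $\mathcal{E}^{s,\alpha,q}_{\mathrm{rad}}(\R^d)$ whose $\mathcal{E}^{s,\alpha,q}$-norms stay bounded while $\|\varphi_\delta\|_{L^p(\R^d)}\to\infty$ as $\delta\to 0^+$; this forces the failure of the embedding $\mathcal{E}^{s,\alpha,q}_{\mathrm{rad}}(\R^d)\hookrightarrow L^p(\R^d)$. I would take $\varphi_\delta$ to be a thin radial shell of fixed inner radius and critically tuned height: fixing a nonnegative $g\in C^\infty_c(\R)\setminus\{0\}$ with $\operatorname{supp}g\subset[1,2]$, set
\[
  \varphi_\delta(x):=\delta^{-\frac{1-2s}{2}}\,g\Bigl(\frac{\abs{x}-1}{\delta}\Bigr),
\]
a smooth radial function supported in the shell $S_\delta:=\{x\in\R^d:1+\delta\le\abs{x}\le 1+2\delta\}$, of volume $\abs{S_\delta}\simeq\delta$. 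The height $\delta^{-(1-2s)/2}$ is dictated by the requirement that $\|\varphi_\delta\|_{\dot H^{s}(\R^d)}$ remain $O(1)$, and the point of the hypothesis $q=\tfrac{2}{1-2s}$, i.e.\ $(1-2s)q=2$, is that this same height keeps the Coulomb term bounded as well.

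The verification splits into three estimates. First, writing $\varphi_\delta=\delta^{-(1-2s)/2}\psi_\delta$ with $\psi_\delta(x)=g((\abs{x}-1)/\delta)$, one has $\|\psi_\delta\|_{L^2(\R^d)}^2\simeq\delta$ and $\|\psi_\delta\|_{\dot H^{1}(\R^d)}^2\lesssim\delta^{-1}$; since $0<s<1$, the interpolation bound $\|\psi_\delta\|_{\dot H^{s}}\le\|\psi_\delta\|_{L^2}^{1-s}\|\psi_\delta\|_{\dot H^{1}}^{s}$ (Plancherel and H\"older, cf.\ \cite{BCD}*{Proposition 1.32}) gives $\|\psi_\delta\|_{\dot H^{s}}^2\lesssim\delta^{1-2s}$, hence $\|\varphi_\delta\|_{\dot H^{s}}^2\lesssim 1$. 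Second, a direct computation gives $\|\varphi_\delta\|_{L^p(\R^d)}^p\simeq\delta^{-(1-2s)p/2}\abs{S_\delta}\simeq\delta^{1-(1-2s)p/2}=\delta^{1-p/q}$, which tends to $+\infty$ precisely because $p>q$ (and, consistently with the validity of the embedding at $p=q$ in Theorem~\ref{thm:radialpartial}, this quantity is of order $1$ when $p=q$). Third, because $(1-2s)q=2$,
\[
  \iint_{\R^d\times\R^d}\frac{\abs{\varphi_\delta(x)}^q\,\abs{\varphi_\delta(y)}^q}{\abs{x-y}^{d-\alpha}}\dif x\dif y
  \le \|g\|_\infty^{2q}\,\delta^{-2}\iint_{S_\delta\times S_\delta}\frac{\dif x\dif y}{\abs{x-y}^{d-\alpha}},
\]
so it remains to prove the shell estimate $\iint_{S_\delta\times S_\delta}\abs{x-y}^{\alpha-d}\dif x\dif y\lesssim\delta^2$.

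This last estimate is the heart of the argument, and it is exactly where $\alpha>1$ is needed. For fixed $x\in S_\delta$ I would bound $\int_{S_\delta}\abs{x-y}^{\alpha-d}\dif y$ by splitting at $\abs{x-y}=\delta$: the contribution of $\{\abs{x-y}\le\delta\}$ is at most $\int_{B_\delta(x)}\abs{x-y}^{\alpha-d}\dif y\simeq\delta^\alpha$, while on $\{\abs{x-y}>\delta\}$ one uses that $S_\delta$ is a $\delta$-tube around the unit sphere, so the measure of $\{y\in S_\delta:\abs{x-y}\le r\}$ is $\simeq\delta\,r^{d-1}$ for $\delta\le r\lesssim 1$, whence $\int_{S_\delta\cap\{\abs{x-y}>\delta\}}\abs{x-y}^{\alpha-d}\dif y\simeq\delta\int_\delta^{1}r^{\alpha-2}\dif r$; since $\alpha-2>-1$, this integral is $O(1)$, so the whole piece is $\lesssim\delta$. (For $\alpha\le1$ it would instead be of order $\delta^{\alpha-1}$ or $\log(1/\delta)$ and the construction would break down, matching the fact that no radial improvement is available when $\alpha\le1$.) Thus $\sup_{x\in S_\delta}\int_{S_\delta}\abs{x-y}^{\alpha-d}\dif y\lesssim\delta$, and integrating in $x$ over $S_\delta$ yields $\iint_{S_\delta\times S_\delta}\abs{x-y}^{\alpha-d}\dif x\dif y\lesssim\abs{S_\delta}\cdot\delta\lesssim\delta^2$. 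Combining the three estimates, $\varphi_\delta\in\mathcal{E}^{s,\alpha,q}_{\mathrm{rad}}(\R^d)$ with $\sup_{0<\delta<1}\|\varphi_\delta\|_{\mathcal{E}^{s,\alpha,q}(\R^d)}<\infty$ while $\|\varphi_\delta\|_{L^p(\R^d)}\to\infty$, and since $(2s-1)q+2=0$ gives $p_{\mathrm{rad}}=q$, the claimed non-embedding for $p>q=p_{\mathrm{rad}}$ follows.
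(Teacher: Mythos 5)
Your construction is exactly the paper's: the paper takes $u_{\lambda,R,S}$ with $R$ fixed, thickness $S=\delta\to 0$ and height $\lambda=S^{-1/q}=\delta^{-(1-2s)/2}$, and checks the same three estimates, so your proof is correct and follows essentially the same route. The only (minor, and perfectly valid) difference is that you verify the Coulomb bound $\iint_{S_\delta\times S_\delta}\abs{x-y}^{\alpha-d}\dif x\dif y\lesssim\delta^2$ by a direct geometric argument on the $\delta$-tube, whereas the paper invokes the pointwise bound on the radial Riesz kernel from \cite{MMVS}*{Lemma 6.3} (estimate \eqref{eq:stimcoul}, proved in Appendix~\ref{AA}).
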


\begin{thm}\label{thm:sharp}
Let $d\geq 2$, $1<\alpha<d$, $0<s \leq 1$ and $p,q\in[1,+\infty)$.
Then the space $\mathcal{E}^{s, \alpha, q}_{\mathrm{rad}}(\R^d)$ is not continuously embedded in
$L^{p}(\R^d)$ for
\begin{align}
p\le p_{\mathrm{rad}} & \quad\text{and}\quad \frac{1}{q}>\frac{d-2s}{d+\alpha},\\
p\ge p_{\mathrm{rad}} & \quad\text{and}\quad \frac{1}{q}<\frac{d-2s}{d+\alpha}, \; \frac{1}{q}\ne\frac{1-2s}{2}.
\end{align}
\end{thm}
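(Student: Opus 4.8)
The plan is to disprove the embedding by exhibiting, for every admissible $p$, radially symmetric test functions that violate the scaling invariant inequality \eqref{e-main-radial}; by the scaling reduction already carried out for Theorem~\ref{maint} the two statements are equivalent. The building block is a \emph{spherical shell}: fix a nontrivial radial $\eta\in C^\infty_c(\R^d)$ supported in a fixed annulus and, for $\rho>0$ and $0<\delta<\rho$, set $u_{\rho,\delta}(x)=\eta\bigl((\abs{x}-\rho)/\delta\bigr)$. Elementary computations give $\norm{u_{\rho,\delta}}_{L^p(\R^d)}^p\simeq\rho^{d-1}\delta$, and a direct computation (interpolating the homogeneous Sobolev norm between $L^2$ and $\dot H^1$ for $0<s\le1$, and the analogous interpolation with a higher integer order space in general) gives $\norm{u_{\rho,\delta}}_{\dot H^s(\R^d)}^2\simeq\rho^{d-1}\delta^{1-2s}$. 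The decisive point is the Coulomb energy of a thin shell: splitting $\int_{\mathrm{shell}}\abs{x-y}^{\alpha-d}\dif y$ according to whether $\abs{x-y}$ is $\lesssim\delta$, lies between $\delta$ and $\rho$, or is $\simeq\rho$, one finds that for $\alpha>1$ and $\delta\ll\rho$
\[
\iint_{\R^d\times\R^d}\frac{\abs{u_{\rho,\delta}(x)}^q\,\abs{u_{\rho,\delta}(y)}^q}{\abs{x-y}^{d-\alpha}}\dif x\dif y\simeq\rho^{d+\alpha-2}\delta^2,
\]
i.e.\ for $\alpha>1$ the shell interacts like a ball of radius~$\rho$ carrying the same mass, while for $\alpha\le1$ the near-diagonal term dominates and one obtains $\rho^{d-1}\delta^{1+\alpha}$ instead, which is exactly why the radial improvement is lost when $\alpha\le1$.

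For the interior of the two ranges — $p<p_{\mathrm{rad}}$ when $\tfrac1q>\tfrac{d-2s}{d+\alpha}$ and $p>p_{\mathrm{rad}}$ when $\tfrac1q<\tfrac{d-2s}{d+\alpha}$ — I would test \eqref{e-main-radial} on the single shell $u_{\rho,1}$ of fixed thickness and let $\rho\to\infty$. Balancing powers of $\rho$ forces
\[
\frac{d-1}{p}\le\frac{\beta(d-1)}{2}+\gamma(d+\alpha-2),\qquad \beta=\frac{p(d+\alpha)-2dq}{p(d+\alpha-q(d-2s))},\quad\gamma=\frac{2d-p(d-2s)}{2p(d+\alpha-q(d-2s))},
\]
and clearing the common denominator $2p\bigl(d+\alpha-q(d-2s)\bigr)$ — which reverses the inequality precisely when $d+\alpha-q(d-2s)<0$ — this becomes $\bigl(2s(d+\alpha-2)+(d-\alpha)\bigr)p\ge 4qs(d-1)+2(d-\alpha)$; since the coefficient of $p$ is positive and $4qs(d-1)+2(d-\alpha)=\bigl(2s(d+\alpha-2)+(d-\alpha)\bigr)\,p_{\mathrm{rad}}$, this reads $p\ge p_{\mathrm{rad}}$ when $q<\tfrac{d+\alpha}{d-2s}$ and $p\le p_{\mathrm{rad}}$ when $q>\tfrac{d+\alpha}{d-2s}$. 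In either case the shell family contradicts the assumed embedding for the asserted range of $p$.

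The endpoint $p=p_{\mathrm{rad}}$ is the genuinely delicate case, because there the single shell only produces equality of the exponents and is consistent with any finite constant. To resolve it I would superpose $N$ shells at geometrically spaced radii $R_j=\Lambda^{\pm j}$ ($\Lambda$ large; the sign of the exponent chosen so that $\delta_j/R_j\to0$, which makes the shells thin and their supports disjoint), with thicknesses $\delta_j=R_j^{\theta}$ and heights $h_j=R_j^{-\mu}$, and choose $(\theta,\mu)$ so that every one of $\norm{\cdot}_{L^p}^p$, $\norm{\cdot}_{\dot H^s}^2$ and the Coulomb energy receives an order-one contribution from each single shell. By the three single-shell asymptotics above this amounts to three linear equations in the two unknowns $(\theta,\mu)$; they are consistent exactly when $p=p_{\mathrm{rad}}$, in which case $\theta=\theta^\ast:=\tfrac{q(d-1)-(d+\alpha-2)}{2-q(1-2s)}$ and $\mu=\tfrac{d-1+\theta^\ast}{p_{\mathrm{rad}}}$. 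For $\varphi_N=\sum_{j=1}^N h_j\,u_{R_j,\delta_j}$ one then has $\norm{\varphi_N}_{L^p(\R^d)}^p\simeq N$ by disjointness of supports, $\norm{\varphi_N}_{\dot H^s(\R^d)}^2\simeq N$ (the diagonal is $\simeq N$ and, precisely because $\theta^\ast\neq1$, the cross terms are of strictly lower order — this is where the hypothesis $s\le1$ enters), and, using that the distance between two of these shells is comparable to the larger of their radii together with the monotonicity of the masses $M_j=h_j^qR_j^{d-1}\delta_j$ forced by the choice of signs, the Coulomb energy is also $\simeq N$. Plugging into \eqref{e-main-radial} would force $\tfrac1{p_{\mathrm{rad}}}\le\tfrac\beta2+\gamma$, which by the computation of the previous paragraph is the relation $p_{\mathrm{rad}}\ge\tfrac{2(2qs+\alpha)}{2s+\alpha}$ (resp.\ $p_{\mathrm{rad}}\le\tfrac{2(2qs+\alpha)}{2s+\alpha}$) — violated for $\alpha>1$, since the strict inequality between $p_{\mathrm{rad}}$ and $\tfrac{2(2qs+\alpha)}{2s+\alpha}$ is exactly the quantitative content of the radial improvement of Theorem~\ref{thm:radialpartial} over Theorem~\ref{maint}. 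The exclusion of $\tfrac1q=\tfrac{1-2s}{2}$ enters here: then $2-q(1-2s)=0$, the exponent $\theta^\ast$ degenerates and $p_{\mathrm{rad}}=q$, which is the borderline treated separately in Theorem~\ref{thm:sharp-q}. The main obstacles I anticipate are the sharp two-sided Coulomb estimate for a thin shell when $\alpha>1$ and the bookkeeping of the off-diagonal Coulomb interactions and the $\dot H^s$-cross terms in $\varphi_N$, which must be shown not to exceed the diagonal contribution $\simeq N$.
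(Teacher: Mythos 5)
Your proposal is correct and follows essentially the same route as the paper: thin annular bumps $\eta((|x|-R)/S)$ with the key estimates $\|u\|_{\dot H^s}^2\lesssim \lambda^2R^{d-1}S^{1-2s}$ and (for $\alpha>1$) Coulomb $\lesssim \lambda^{2q}R^{d+\alpha-2}S^2$ handle the open ranges, and a multibump superposition at geometrically spaced radii with each bump normalized to contribute $O(1)$ to all three quantities handles the endpoint $p=p_{\mathrm{rad}}$, exactly as in the paper (your $(\theta,\mu)$ are the paper's $(\gamma,-\beta)$, and your exponent identity $\frac1{p_{\mathrm{rad}}}\le\frac\beta2+\gamma\Leftrightarrow p_{\mathrm{rad}}\ge\frac{2(2qs+\alpha)}{2s+\alpha}$ is the same contradiction the paper extracts after rescaling $v_{R,m}$). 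Two of the obstacles you flag are not actually needed: since the Coulomb and $\dot H^s$ terms sit on the right-hand side with nonnegative exponents, only \emph{upper} bounds on them are required (no two-sided shell estimate), and for $s\le1$ the $\dot H^s$ cross terms are simply zero ($s=1$, disjoint supports) or negative ($s<1$, Gagliardo seminorm), which is precisely where the hypothesis $s\le1$ enters.
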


\begin{thm}\label{thm:sharp2}
Let $d\geq 2$, $0<\alpha \leq 1$, $0<s \leq 1$ and $p,q\in[1,+\infty)$.
Then the space $\mathcal{E}^{s, \alpha, q}_{\mathrm{rad}}(\R^d)$ is not continuously embedded in $L^{p}(\R^d)$ for
\begin{align}
p<\frac{2 (2 q s+\alpha)}{2s+\alpha} & \quad\text{and}\quad \frac{1}{q}>\frac{d-2s}{d+\alpha},\\
p>\frac{2 (2 q s+\alpha)}{2s+\alpha} & \quad\text{and}\quad \frac{1}{q}<\frac{d-2s}{d+\alpha}.
\end{align}
\end{thm}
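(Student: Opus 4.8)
\textbf{Proof proposal for Theorem~\ref{thm:sharp2}.}

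The plan is to construct, for each pair $(p,q)$ in the indicated ranges, a one-parameter family of radially symmetric functions $(\varphi_t)_{t>0}$ in $\mathcal{E}^{s,\alpha,q}_{\mathrm{rad}}(\R^d)$ along which the ratio
\[
  \frac{\Vert \varphi_t\Vert_{L^p(\R^d)}}{\Vert \varphi_t\Vert_{\dot H^s(\R^d)}^{\beta}\,D(\varphi_t)^{\gamma}}
\]
is unbounded, where $\beta,\gamma$ are the scaling-invariant exponents appearing in \eqref{e-main} and $D(\varphi)$ denotes the Coulomb integral. By the scaling invariance of all three quantities, it suffices to exhibit a family that is \emph{not} obtained from a fixed profile by pure rescaling: a natural choice is to take a fixed radial bump $\eta\in C^\infty_c(\R^d)$ supported in an annulus $\{1\le |x|\le 2\}$, and set $\varphi_t(x)=\eta(x/t)$, then track the three norms as $t\to 0$ or $t\to\infty$. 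Since the inequality is scaling invariant, a pure dilation cannot detect a failure; the point is that for $\alpha\le 1$ the radial structure gives \emph{no} extra decay, so one must instead use a profile that spreads out over many disjoint annuli, mimicking the translation-based construction $v_{a,m}=\sum_{k=1}^m \eta(\cdot + ka)$ used for the nonradial optimality in the proof of Theorem~\ref{maint}, but arranged \emph{radially}: place $m$ copies of $\eta$ on concentric annuli $\{R_k\le |x|\le R_k+1\}$ with radii $R_k$ chosen so the copies are essentially non-interacting both in $\dot H^s$ and in the Coulomb energy.

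The key steps, in order, are: (i) choose radii $R_k$ (e.g. $R_k=k$, or lacunary $R_k=2^k$, to be optimized) and set $\varphi_m=\sum_{k=1}^m \eta_k$ where $\eta_k$ is $\eta$ translated to the $k$-th annulus and possibly given a $k$-dependent amplitude $\lambda_k$ to exploit the fact that the annuli have comparable $(d-1)$-dimensional ``thickness'' rather than growing volume; (ii) estimate $\Vert\varphi_m\Vert_{L^p}^p \simeq \sum_k \lambda_k^p R_k^{d-1}$ since the supports are disjoint; (iii) estimate $\Vert\varphi_m\Vert_{\dot H^s}^2 \lesssim \sum_k \lambda_k^2 R_k^{d-1}$ by interpolating between $L^2$ and $\dot H^{\lceil s\rceil}$ exactly as in \eqref{eq:stimsob-m-nonrad}, using that each bump contributes $\simeq \lambda_k^2 R_k^{d-1}$ in $L^2$ and $\simeq\lambda_k^2 R_k^{d-1}$ in $\dot H^k$ (the derivatives are $O(1)$ on a unit-thickness annulus), together with almost-orthogonality of far-apart supports; (iv) estimate the Coulomb energy $D(\varphi_m)\lesssim \sum_{j,k}\lambda_j^q\lambda_k^q (R_j^{d-1})(R_k^{d-1})/|R_j-R_k|^{d-\alpha}$ for $j\ne k$ plus the diagonal $\sum_k \lambda_k^{2q} R_k^{d-1}$ (using \eqref{boundSupBall} for the self-interaction of each annular bump, since $\eta_k$ is supported in a ball of radius $\simeq R_k$), and show that for $\alpha\le1$ the off-diagonal sum is dominated by (or comparable to) the diagonal one, so $D(\varphi_m)\lesssim \sum_k \lambda_k^{2q}R_k^{d-1}$; (v) plug these three estimates into the ratio and, writing $a_k:=\lambda_k^2 R_k^{d-1}$, reduce to the elementary question of whether
\[
  \Bigl(\sum_k a_k^{p/2}\lambda_k^{p-?}\Bigr)^{1/p}
  \lesssim \Bigl(\sum_k a_k\Bigr)^{\beta/2}\Bigl(\sum_k a_k^{q}\lambda_k^{?}\Bigr)^{\gamma}
\]
can fail; by choosing either all $\lambda_k$ equal and $m\to\infty$ (spreading mass) or concentrating on a single annulus with $R_k\to\infty$, one of the two inequalities $p\lessgtr \frac{2(2qs+\alpha)}{2s+\alpha}$ is violated, and the direction matches the sign of $\frac1q-\frac{d-2s}{d+\alpha}$ because that sign controls whether $\beta$ and $\gamma$ are both positive. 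The final step (vi) is to verify that the constructed $\varphi_m$ genuinely lies in $\mathcal{E}^{s,\alpha,q}_{\mathrm{rad}}(\R^d)$ (immediate, being smooth and compactly supported).

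The main obstacle I anticipate is step (iv): getting a \emph{matching} upper and lower bound on the Coulomb double integral of a superposition of well-separated annular bumps, and in particular verifying that when $\alpha\le1$ the nonlocal interaction between annuli does not help (i.e.\ does not make $D$ smaller than the diagonal), which is precisely the phenomenon that distinguishes $\alpha\le1$ from $\alpha>1$. One expects that for $\alpha\le 1$ and radii growing at most polynomially the kernel $|x-y|^{-(d-\alpha)}$ integrated over two annuli of radii $R_j, R_k$ at distance $|R_j-R_k|$ contributes roughly $R_j^{d-1}R_k^{d-1}|R_j-R_k|^{-(d-\alpha)}$, and a careful but routine summation shows this is $O(\sum_k \lambda_k^{2q}R_k^{d-1})$ after optimizing the $R_k$; one should cross-check against the sharp \eqref{boundSupBall} bound and against Ruiz's estimates of Theorem~\ref{thm:ruiz} to make sure the construction is tight. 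Once the three asymptotics are pinned down, the contradiction with \eqref{e-main-radial} (equivalently \eqref{e-main}) in the stated $p$-ranges is a short computation, and the borderline structure at $p=\frac{2(2qs+\alpha)}{2s+\alpha}$ confirms, together with Theorem~\ref{lem:gen}, that this endpoint is exactly the threshold when $\alpha\le 1$.
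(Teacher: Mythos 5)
Your overall strategy (radial annular bumps, compute the three quantities, contradict the scaling-invariant inequality) is the right one, but the decisive step is exactly the one you leave open, and the formulas you propose for it are not correct. First, \eqref{boundSupBall} cannot give the diagonal Coulomb bound in step (iv): it is a \emph{lower} bound on the Coulomb energy in terms of local $L^q$ mass, so it points in the wrong direction for a counterexample. The actual content of the theorem is the sharp \emph{upper} bound for the self-energy of an annulus $u_{\lambda,R,S}(x)=\lambda\,\eta((\abs{x}-R)/S)$, which requires the one-dimensional kernel representation for radial Riesz energies (the paper's Appendix~A, based on \cite{MMVS}*{Lemma 6.3}): the self-energy is $\lesssim \lambda^{2q}R^{d-1}S^{1+\alpha}$ for $\alpha<1$ (with a factor $\log(R/S)$ at $\alpha=1$) but $\lesssim\lambda^{2q}R^{d+\alpha-2}S^2$ for $\alpha>1$ --- this dichotomy \emph{is} the reason $\alpha=1$ is the threshold, and without it the proof does not start. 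Second, your off-diagonal formula $R_j^{d-1}R_k^{d-1}\abs{R_j-R_k}^{-(d-\alpha)}$ treats two concentric spheres as separated point masses; the correct concentric-annulus bound coming from the same kernel is $(R_jR_k)^{(d-1)/2}\abs{R_j-R_k}^{\alpha-1}$ for $\alpha<1$, and your cruder bound, while valid, is not summable uniformly at $\alpha=1$, so the multibump sum cannot be closed there as written. You also do not treat the degenerate exponent $\frac1q=\frac{1-2s}{1+\alpha}$, which the paper handles by a separate family with $R$ fixed and $S\to0$.

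It is also worth noting that the multibump layer of your construction is unnecessary here. The paper's proof uses a \emph{single} annulus with anisotropically coupled parameters, $\lambda=R^{\beta}$ and $S=R^{\gamma}$ with $\beta,\gamma$ chosen so that $\norm{u_R}_{\dot H^s(\R^d)}\lesssim 1$ and the Coulomb term is $\lesssim 1$, while $\norm{u_R}_{L^p(\R^d)}^p\simeq R^{\beta(p-\frac{2(2qs+\alpha)}{2s+\alpha})}$ blows up as $R\to\infty$ or $R\to0$ according to the sign of $\frac1q-\frac{d-2s}{d+\alpha}$. The point you miss in asserting that ``a pure dilation cannot detect a failure, so one must spread over many annuli'' is that a thin annulus with $S\ll R$ is already \emph{not} a dilation of a fixed profile: the ratio $S/R$ is a second, scaling-breaking degree of freedom, and it alone suffices. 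Your concentric multibump could be pushed through for $\alpha<1$ with sufficiently lacunary radii and the corrected interaction estimates, but as written step (iv) — which you rightly flag as the obstacle — contains the gap rather than a plan to close it.
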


The proof of Theorems~\ref{thm:sharp} and~\ref{thm:sharp2} is obtained by constructing counterexamples, i.e
a family of functions $u$ such that for a suitable $p$ it holds
\begin{gather*}
\nonumber \Vert u\Vert _{\dot H^{s}(\R^d)}^{2}  \simeq 1\\
\label{eq:claim}\iint_{\R^d\times \R^d}\frac{\abs{u (x)}^q\,\abs{u (y)}^{q}}{\abs{x - y}^{d-\alpha}} \dif x \dif y \simeq  1\\
\nonumber \vert \abs{u}\vert _{L^p(\R^d)}^{p}  \rightarrow +\infty.
\end{gather*}
Given a nonnegative function \(\eta \in C^\infty (\R) \setminus \{0\}\) such that \(\operatorname{supp} \eta
\subset [-1, 1]\), we consider the family of functions
\begin{equation}\label{eq:u}
u_{\lambda,R,S}(x)= \lambda \, \eta \Bigl(\frac{\abs{x} - R}{S} \Bigr),
\end{equation}
where $R>S>0$ and $\lambda>0$ will be specified in the sequel.

By elementary computation we obtain
\begin{equation}\label{eq:compP}
\norm{u_{\lambda,R,S}}_{p}^p \simeq  \lambda^p R^{d-1}S.
\end{equation}
We also claim that
\begin{equation}
 \Vert u_{\lambda,R,S}\Vert _{\dot H^{s}(\R^d)}^{2}  \simeq  \lambda^2R^{d-1}S^{1-2s}, \label{eq:stimsob}
\end{equation}
and
\begin{equation}
 \iint_{\R^d\times \R^d}\frac{\abs{u_{\lambda,R,S} (x)}^q\,\abs{u_{\lambda,R,S} (y)}^{q}}{\abs{x - y}^{d-\alpha}} \dif x \dif y \lesssim
 \left\{\begin{array}{ll}
  \lambda^{2q}R^{d+\alpha-2}S^2  &\text{if $1<\alpha<d$},\smallskip\\
  \lambda^{2q}R^{d-1}S^2\log(R/S)  &\text{if $\alpha=1$},\smallskip\\
  \lambda^{2q}R^{d-1}S^{1+\alpha} &\text{if $0<\alpha<1$}.
 \end{array}\right. \label{eq:stimcoul}
\end{equation}
The estimate \eqref{eq:stimcoul} is proved in Appendix~\ref{AA} below.

To prove \eqref{eq:stimsob}, for any $s>0$ choose $k\in\N$ such that $2k\ge s$.
Taking into account that $S<R$, by the change of variables and scaling we compute
\begin{align}\label{eq:stimsob1-app2k}
  \Vert u_{\lambda,R,S}\Vert _{\dot H^{2k}(\R^d)}^{2}
  &\simeq \int_{\R^d}|\Delta^k u_{\lambda,R,S}|^2\dif x
  \simeq\int_0^\infty\left|\Big\{\frac{\partial^2}{\partial r^2}+\frac{d-1}{r}\frac{\partial}{\partial r}\Big\}^ku_{\lambda,R,S}(r)\right|^2r^{d-1}\dif r\\
  &=\int_0^\infty\left|\Big\{\frac{\partial^{2k}}{\partial r^{2k}}+\frac{a_1}{r}\frac{\partial^{2k-1}}{\partial r^{2k-1}}+\dots+\frac{a_k}{r^k}\frac{\partial^{k}}{\partial r^{k}}\Big\}u_{\lambda,R,S}(r)\right|^2 r^{d-1}dr\nonumber\\
  &\le \lambda^2d\left(\int_0^\infty\Big|\eta^{(2k)} \Bigl(\tfrac{r - R}{S} \Bigr)\Big|^2r^{d-1}dr
 +|a_1|\int_0^\infty\eta^{(2k-1)} \Big|\Bigl(\tfrac{r - R}{S} \Bigr)\Big|^2r^{d-3}dr\right.\nonumber\\
 &\hspace{10em}\left.+\dots
 +|a_d|\int_0^\infty\eta^{(k)} \Big|\Bigl(\tfrac{r - R}{S} \Bigr)\Big|^2 r^{d-1-2k}dr\right)\nonumber\\
 &\lesssim  \lambda^2 \Big(S^{1-4k}R^{d-1}+ S^{1-2(2k-1)}R^{d-3}+\dots+S^{1-2k}R^{d-1-k}\Big)\nonumber\\
 &\lesssim\lambda^2 S^{1-4k}R^{d-1}.\nonumber
\end{align}
Interpolating between the $L^2$ and $\dot{H}^{2k}$--norm of $u_{\lambda,R,S}$ (cf. \cite{BCD}*{Proposition 1.32}), we conclude
from \eqref{eq:compP} and \eqref{eq:stimsob1-app2k} that
\begin{equation*}
\Vert u_{\lambda,R,S}\Vert _{\dot H^{s}(\R^d)}^{2}\le \Vert u_{\lambda,R,S}\Vert _{\dot H^{2k}(\R^d)}^{\frac{s}{k}}\Vert u_{\lambda,R,S}\Vert _{L^2(\R^d)}^{2-\frac{s}{k}}\lesssim \lambda^2R^{d-1}S^{1-2s}.
\end{equation*}

\begin{proof}[Proof of Theorem~\ref{thm:sharp-q}]
Let $u_S:=u_{\lambda,R,S}$ be the function in \eqref{eq:u}, where we fix $R>0$ and for $S<R$ set
\begin{equation*}\label{eR-q}
\lambda=S^{-\frac{1}{q}}.
\end{equation*}
Then, since by our assumption \(1 < \alpha < d\),
\begin{gather}
 \Vert u_S\Vert _{\dot H^{s}(\R^d)}^{2}  \lesssim  R^{d-1}, \label{eq:stimsob-R-q}\\
 \iint_{\R^d\times \R^d}\frac{\vert u_S(x)\vert^q\,\vert u_S(y)\vert^{q}}{\abs{x - y}^{d-\alpha}} \dif x \dif y \lesssim  R^{d+\alpha-2}   \label{eq:stimcoul-R-q},\\
 \Vert u_S\Vert _{L^p(\R^d)}^{p}\simeq\lambda^{p} S R^{d-1} \simeq \lambda^{p-q} R^{d-1} \simeq
 S^{1-\frac{p}{q}}R^{d-1},
 \label{eq:stimq-R-q}
\end{gather}
Since $R$ is fixed, we conclude that $\Vert u_S\Vert _{L^p(\R^d)}\to\infty$ for $p>q$ when $S\to 0$.
\end{proof}

\begin{proof}[Proof of Theorem~\ref{thm:sharp}]
Let $u_R:=_{\lambda,R,S}$ be the function in \eqref{eq:u}, where we set
\begin{equation*}\label{eR}
\lambda=R^{\beta}\quad\text{and}\quad
S=\bigl(\lambda^2R^{d-1}\bigr)^{\frac{1}{2s-1}}=R^\gamma,
\end{equation*}
with
\begin{equation}\label{eRtgamma}
\beta=-\frac{2(d-1)+(d+\alpha-2)(2s-1)}{2q (2s-1)+4},\qquad\gamma=\frac{q(d-1)-(d+\alpha-2)}{q(2s-1)+2}.
\end{equation}
Then we compute
\begin{gather}
 \Vert u_R\Vert _{\dot H^{s}(\R^d)}^{2}  \lesssim  1, \label{eq:stimsob-R}\\
 \iint_{\R^d\times \R^d}\frac{\vert u_R(x)\vert^q\, \vert u_R(y)\vert^{q}}{\abs{x - y}^{d-\alpha}} \dif x \dif y \lesssim  1   \label{eq:stimcoul-R},\\
 \Vert u_R\Vert _{L^p(\R^d)}^{p}\simeq\lambda^{p}R^{d-1}S
 \simeq R^{\beta (p - p_{\mathrm{rad}})},
 \label{eq:stimq-R}
\end{gather}
provided that $R>S$, that is, either \(R > 1\) and \(\gamma < 1\) or \(R < 1\) and \(\gamma > 1\).
To complete the proof Theorem~\ref{thm:sharp} for $p\neq p_{\mathrm{rad}}$ we select $R$
according to Table~\ref{table2}.

\begin{table}[h!]
\begin{center}
\begin{tabular}{ c c  c c c }
\toprule
$q$ &  $\beta$ & $\gamma$ & Choice of $R$& Conclusion \\
\midrule
$\frac{1}{q}>\frac{d-2s}{\alpha+d}$ &  $\beta<0$ & $0<\gamma<1$ & $R\to\infty$ & $\Vert u_R\Vert _{L^p(\R^d)}^{p}\to\infty$ for $p<p_{\mathrm{rad}}$\\
\addlinespace
$\frac{1}{q}\in\left(\big(\frac{1-2s}{2}\big)_+,\frac{d-2s}{\alpha+d}\right)$ & $\beta<0$ & $\gamma>1$& $R\to 0$ & $\Vert u_R\Vert _{L^p(\R^d)}^{p}\to\infty$ for $p>p_{\mathrm{rad}}$\\
\addlinespace
$s<1/2$ and $\frac{1}{q}<\frac{1-2s}{2}$  & $\beta>0$ & $\gamma<0$& $R\to\infty$ & $\Vert u_R\Vert _{L^p(\R^d)}^{p}\to\infty$ for $p>p_{\mathrm{rad}}$\\
\bottomrule
\end{tabular}
\end{center}
\caption{Choice of $R$ which ensures $R>S$ and $\Vert u_R\Vert _{L^p(\R^d)}^{p}\to\infty$ for $\alpha>1$.}\label{table2}
\end{table}

\smallskip
Next we prove that $\mathcal{E}^{s, \alpha, q}_{\mathrm{rad}}(\R^d)\not\subset L^{p_{\mathrm{rad}}}(\R^d)$ when $\frac{1}{q}\neq\frac{1-2s}{2}$.
Similarly to \cite{MMVS}*{Lemma 6.4}, we consider the ``multibump'' sequence
\begin{equation*}
v_{R,m}=\sum_{k=1}^m u_{R^k},
\end{equation*}
where the functions $u_{R^k}$ are as in \eqref{eq:u} with $R=R^k$, $\lambda=R^{k\beta}$, $S=R^{k \frac{2 \beta + d - 1}{2 s - 1}}$ and where $\beta$ is given in \eqref{eRtgamma}.
Note that for $R\neq 1$ and sufficiently large quotient $R/S$ the functions $u_{R^k}$ ($k=1,\dots,m$) have mutually disjoint supports.

If $\frac{1}{q}>\frac{d-2s}{\alpha+d}$, or $s<1/2$ and $\frac{1}{q}<\frac{1-2s}{2}$ then we let $R\to\infty$.
We obtain
\begin{gather}
 \Vert v_{R,m}\Vert _{L^p(\R^d)}^{p}\simeq m,\label{eq:stimq-m}\\
 \Vert v_{R,m}\Vert _{\dot H^{s}(\R^d)}^{2}  \lesssim  m, \label{eq:stimsob-m}\\
 \iint_{\R^d\times \R^d}\frac{\vert v_{R,m}(x)\vert^q\,\vert v_{R,m}(y)\vert^{q}}{\abs{x - y}^{d-\alpha}} \dif x \dif y \lesssim  m   \label{eq:stimcoul-m}.
\end{gather}
For derivation of \eqref{eq:stimcoul-m} see \cite{MMVS}*{proof of Lemma 6.4}.
To obtain \eqref{eq:stimsob-m}, we
observe that
\begin{equation}\label{e-summm}
\Vert v_{R,m}\Vert _{\dot H^{s}(\R^d)}^{2}=\sum_{k=1}^m\Vert u_{R^k}\Vert _{\dot H^{s}(\R^d)}^{2}+2\sum_{i,j=1,i>j}^m( u_{R^i},u_{R^j})_{\dot H^{s}(\R^d)}.
\end{equation}
If \(s\) is an integer the second term vanishes, or if $s<1$ then the second term is negative.
Otherwise, \(s = \ell + \sigma\), with $\ell\in\N$ and \(\sigma \in (0, 1)\). Thus by the Gagliardo seminorm characterization of \(\dot H^s (\R^d)\), if \(u_{R^i}\) and \(u_{R^j}\) have disjoint supports,
\begin{equation}\label{eq:ell1}
\begin{split}
  ( u_{R^i},u_{R^j})_{\dot H^{s}(\R^d)}
  &= \iint_{\R^d\times \R^d}\frac{(\nabla^\ell u_{R^i} (x) - \nabla^\ell u_{R^i} (y))\cdot (\nabla^\ell u_{R^j} (x) - \nabla^\ell u_{R^j} (y)}{\abs{x - y}^{d + 2 \sigma}}\dif x \dif y\\
  &= -2 C \iint_{\R^d\times \R^d}\frac{\nabla^\ell u_{R^i} (x) \cdot \nabla^\ell u_{R^j} (y)}{\abs{x - y}^{d + 2 \sigma}}\dif x \dif y.
\end{split}
\end{equation}
Similarly to \eqref{eq:stimsob1-app2k}, we deduce that $\norm{D^\ell u_{\lambda,R,S}}_{L^1(\R^d)}\lesssim \lambda R^{d-1}S^{1-\ell}$ and hence
\begin{equation}\label{eq:l1}
\norm{D^\ell u_{R^k}}_{L^1(\R^d)}\lesssim R^{k(\beta+d-1+\gamma(1-\ell))}.
\end{equation}
If $\frac{1}{q}>\frac{d-2s}{\alpha+d}$ then $\beta<0$ and $0<\gamma<1$. For $i>j$ and if \(R^i \gg R^j\) we estimate \eqref{eq:ell1} as follows,
\begin{equation}\label{eq:ell3}
\begin{split}
  ( u_{R^i},u_{R^j})_{\dot H^{s}(\R^d)}
  &\lesssim
  \frac{\norm{D^\ell u_{R^i}}_{L^1(\R^d)}\norm{D^\ell u_{R^j}}_{L^1(\R^d)} }{\big(R^i-R^j\big)^{d + 2\sigma}}\\
  &\lesssim
  R^{-i(d + 2\sigma)}R^{(i + j) (\beta + d - 1 +\gamma(1-\ell))}\\
  &\lesssim R^{-i(d + 2\sigma)}R^{i(2(\gamma s-\beta) + 2\sigma\gamma)}\lesssim R^{-i(2\sigma(1-\gamma))},
\end{split}
\end{equation}
since we note that $2(\gamma s-\beta)<d$, provided that $q<\frac{d+\alpha}{d-2s}$.
Then in \eqref{e-summm} for all sufficiently large $R$ we have
\begin{equation}
\Vert v_{R,m}\Vert _{\dot H^{s}(\R^d)}^{2}\lesssim m+\sum_{i,j=1,i>j}^m R^{-i(2\sigma(1-\gamma))}\lesssim m.
\end{equation}
The case $\frac{1}{q}\in\left(\big(\frac{1-2s}{2}\big)_+,\frac{d-2s}{\alpha+d}\right)$ is similar, but letting $R\to 0$
and observing that $\gamma<0$.

Now, set
\begin{equation*}
w_{R,m}(x)=m^{\theta} v_{R,m}\Big(\frac{x}{m^{\sigma}}\Big).
\end{equation*}
Then by the standard scaling we have
\begin{gather}
 \Vert w_{R,m}\Vert _{L^p(\R^d)}^{p}\simeq m^{p\theta+\sigma d+1},\label{eq:stimq-m-1}\\
 \Vert w_{R,m}\Vert _{\dot H^{s}(\R^d)}^{2}  \lesssim  m^{2\theta+\sigma(d-2s)+1}, \label{eq:stimsob-m-1}\\
 \iint_{\R^d\times \R^d}\frac{\vert w_{R,m}(x)\vert^q\,\vert w_{R,m}(y)\vert^{q}}{\abs{x - y}^{d-\alpha}} \dif x \dif y \lesssim  m^{2q\theta+\sigma(d+\alpha)+1}  \label{eq:stimcoul-m-1}.
\end{gather}
If we set
$$\sigma=\frac{q-1}{d+\alpha-q(d-2s)},\qquad\theta=-\frac{2s+\alpha}{2(d+\alpha-q(d-2s))},$$
then for $R\to\infty$ and $m\to\infty$ we obtain
\begin{align}
 &\Vert w_{R,m}\Vert _{\dot H^{s}(\R^d)}^{2}  \lesssim 1, \label{eq:stimsob-mbg}\\
 &\iint_{\R^d\times \R^d}\frac{\vert w_{R,m}(x)\vert^q\,\vert w_{R,m}(y)\vert^{q}}{\abs{x - y}^{d-\alpha}} \dif x \dif y \lesssim  1  \label{eq:stimcoul-mbg},\\
 &\Vert w_{R,m}\Vert _{L^p(\R^d)}^{p}\simeq m^{p\theta+\sigma d+1}\simeq m^\frac{2s(\alpha-1)}{2s(d+\alpha-2)+d-\alpha}\to\infty,\label{eq:stimq-mbg}
\end{align}
since $\alpha>1$ and $d\ge 2$.

The case $\frac{1}{q}\in\left(\big(\frac{1-2s}{2}\big)_+,\frac{d-2s}{\alpha+d}\right)$ is similar, by letting $R\to 0$.
\end{proof}

\begin{proof}[Proof of Theorem~\ref{thm:sharp2}]
The strategy in the case $0<\alpha<1$ and $\frac{1}{q}\neq \frac{1-2s}{1+\alpha}$ is the same as in the first part of the proof of Theorem~\ref{thm:sharp}.
Let $u_R:=u_{\lambda,R,S}$ be the function in \eqref{eq:u} and we choose
\begin{equation*}\label{eR1}
\lambda=R^{\beta},\qquad S=\bigl(\lambda^2R^{d-1}\bigr)^{\frac{1}{2s-1}}=R^\gamma,
\end{equation*}
where
\begin{equation*}\label{eR1betagamma}
\beta=-\frac{(d-1)(2s+\alpha)}{2(q (2s-1)+1+\alpha)},\qquad \gamma=\frac{(d-1)(q-1)}{q(2s-1)+1+\alpha}.
\end{equation*}
Then \eqref{eq:stimsob-R} and \eqref{eq:stimcoul-R} hold, and
\[
\norm{u_R}_{L^p(\R^d)}^{p}\simeq\lambda^{p}R^{d-1}S \simeq R^{\beta (p-\frac{2 (2qs+\alpha)}{2s+\alpha})},
\]
provided that $R>S$.
Then to construct the required counterexamples, we select $R$ according to Table~\ref{table3}.

\begin{table}[h!]
\begin{center}
\begin{tabular}{ c c  c c c }
\toprule
$q$  & $\beta$ & $\gamma$ & Choice of $R$& Conclusion \\
\midrule
$\frac{1}{q}>\frac{d-2s}{\alpha+d}$ & $\beta<0$& $0<\gamma<1$  & $R\to\infty$ & $\Vert u_R\Vert _{L^p(\R^d)}^{p}\to\infty$ for $p<\frac{2 (2qs+\alpha)}{2s+\alpha}$\\
\addlinespace
$\frac{1}{q}\in\left(\big(\frac{1-2s}{1+\alpha}\big)_+,\frac{d-2s}{\alpha+d}\right)$ & $\beta<0$ & $\gamma>1$& $R\to 0$ & $\Vert u_R\Vert _{L^p(\R^d)}^{p}\to\infty$ for $p>\frac{2 (2qs+\alpha)}{2s+\alpha}$\\
\addlinespace
$s<1/2$ and $\frac{1}{q}<\frac{1-2s}{1+\alpha}$  & $\beta>0$ & $\gamma<0$ & $R\to\infty$ & $\Vert u_R\Vert _{L^p(\R^d)}^{p}\to\infty$ for $p>\frac{2(2qs+\alpha)}{2s+\alpha}$\\
\bottomrule
\end{tabular}
\end{center}
\caption{Choice of $R$ which ensures $R>S$ and $\Vert u_R\Vert _{L^p(\R^d)}^{p}\to\infty$ for $\alpha \le 1$.}\label{table3}
\end{table}

In the case $0<\alpha<1$, $s<1/2$ and $q= \frac{1+\alpha}{1-2s}$ we note that $\frac{2(2qs+\alpha)}{2s+\alpha}=\frac{2}{1-2s}>q$.
Similarly to the proof of Theorem~\ref{thm:sharp-q}, for $u_S:=u_{\lambda,R,S}$ with a fixed $R>0$ and for $S<R$ we set
\begin{equation*}\label{eR-q1}
\lambda=S^{-\frac{\alpha+2s}{2(q-1)}}=S^{\frac{2s-1}{2}}.
\end{equation*}
Then
\begin{gather}
 \Vert u_S\Vert _{\dot H^{s}(\R^d)}^{2}  \simeq  R^{d-1}, \label{eq:stimsob-R-q1}\\
 \iint_{\R^d\times \R^d}\frac{\vert u_S(x)\vert^q\,\vert u_S(y)\vert^{q}}{\abs{x - y}^{d-\alpha}} \dif x \dif y \lesssim  R^{d-1}   \label{eq:stimcoul-R-q1},\\
 \Vert u_S\Vert _{L^p(\R^d)}^{p}\simeq\lambda^{p} S R^{d-1} \simeq \lambda^{p-\frac{2}{2s-1}} R^{d-1} \simeq
 S^{1-\frac{p(1-2s)}{2}}R^{d-1}.
 \label{eq:stimq-R-q1}
\end{gather}
Since $R$ is fixed, we conclude that $\Vert u_S\Vert _{L^p(\R^d)}\to\infty$ for $p>\frac{2(2qs+\alpha)}{2s+\alpha}=\frac{2}{1-2s}$ when $S\to 0$.

The case $\alpha=1$ is similar, but takes into account the logarithmic correction in \eqref{eq:stimcoul}. We omit the details.
\end{proof}

\section{Radial compactness: Proof of Theorem~\ref{thm:radial comp}}\label{compactness section}

We need the following preliminary local compactness result.

\begin{lem}[Local compactness]\label{local compactness}
Let $d\in \mathbb N$, $s>0$, $\alpha\in (0,d)$ and $q\in [1,\infty)$. Then the embedding $\mathcal{E}^{s, \alpha, q} (\R^d)\hookrightarrow L^{1}_{\textrm{loc}}(\R^d)$ is compact.
\end{lem}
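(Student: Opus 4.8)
The plan is to show that every sequence $(u_n)_{n\in\N}$ bounded in $\mathcal{E}^{s,\alpha,q}(\R^d)$ has a subsequence converging strongly in $L^1(B_R)$ for every $R>0$, and then to conclude by a diagonal extraction over $R\in\N$. Two a priori bounds are available: $((-\Delta)^{s/2}u_n)_{n\in\N}$ is bounded in $L^2(\R^d)$ by definition of the space, and by Theorem~\ref{lem:gen} the sequence $(u_n)_{n\in\N}$ is bounded in $L^{p_0}(\R^d)$, where $p_0:=\frac{2(2qs+\alpha)}{2s+\alpha}\ge2$ — the inequality $p_0\ge2$ being equivalent to $q\ge1$. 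One cannot apply the Rellich--Kondrachov theorem directly, because $\dot H^s$ is a \emph{homogeneous} space: the norm $\norm{u_n}_{L^2(\R^d)}$ is not controlled, and when $s<\frac d2$ and $q>1$ the $u_n$ need not lie in $L^2(\R^d)$ at all. The remedy is to localise with a smooth cut-off.

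Fix $R>0$ and pick $\chi\in C^\infty_c(\R^d)$ with $\chi\equiv1$ on $B_R$ and $\operatorname{supp}\chi\subset B_{2R}$. I claim that $(\chi u_n)_{n\in\N}$ is bounded in the \emph{inhomogeneous} space $H^s(\R^d)$. First, $\chi u_n\in L^2(\R^d)$ with a bound uniform in $n$: by H\"older's inequality on $\operatorname{supp}\chi$ if $p_0>2$, and directly from $u_n\in L^2(\R^d)$ if $p_0=2$ (i.e.\ $q=1$). Second, the fractional Leibniz (Kato--Ponce) inequality gives
\[
  \bignorm{(-\Delta)^{s/2}(\chi u_n)}_{L^2(\R^d)}
  \lesssim \norm{\chi}_{L^\infty(\R^d)}\bignorm{(-\Delta)^{s/2}u_n}_{L^2(\R^d)}
  + \bignorm{(-\Delta)^{s/2}\chi}_{L^{p_3}(\R^d)}\norm{u_n}_{L^{p_0}(\R^d)},
  \qquad\tfrac1{p_3}=\tfrac12-\tfrac1{p_0},
\]
and since $(-\Delta)^{s/2}\chi$ is bounded and decays like $\abs{x}^{-d-s}$ at infinity, it belongs to every $L^{p_3}(\R^d)$ with $p_3\in[1,\infty]$, so the right-hand side is bounded in $n$. (For $q=1$ one may skip this step, since $\mathcal{E}^{s,\alpha,1}(\R^d)\hookrightarrow H^s(\R^d)$ by Theorem~\ref{lem:gen}, so $(u_n)$ is already bounded in $H^s(\R^d)$.) By the Rellich--Kondrachov compactness theorem for fractional Sobolev spaces, $(\chi u_n)_{n\in\N}$ has a subsequence converging strongly in $L^2(B_{2R})$, hence in $L^1(B_R)$; as $\chi u_n=u_n$ on $B_R$, the same holds for $(u_n)$. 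A diagonal extraction over $R\in\N$ then gives a single subsequence converging in $L^1_{\mathrm{loc}}(\R^d)$.

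The crux of the argument is the uniform $H^s(\R^d)$-bound on the localisations $\chi u_n$: one must bound the commutator of $(-\Delta)^{s/2}$ with multiplication by a smooth cut-off using only the homogeneous seminorm of $u_n$ and its $L^{p_0}$-bound, which is precisely what the fractional Leibniz rule supplies; once this is in place, the statement reduces to the classical local compactness of $H^s(\R^d)$. For $s\in(0,1)$ one can avoid Kato--Ponce and estimate the Gagliardo seminorm of $\chi u_n$ by hand, writing $\chi(x)u_n(x)-\chi(y)u_n(y)=\chi(x)\bigl(u_n(x)-u_n(y)\bigr)+u_n(y)\bigl(\chi(x)-\chi(y)\bigr)$ and controlling the second term via $\abs{\chi(x)-\chi(y)}\le\min\bigl(2\norm{\chi}_{L^\infty(\R^d)},\,\norm{\nabla\chi}_{L^\infty(\R^d)}\abs{x-y}\bigr)$ together with the $L^{p_0}$-bound; for general $s>0$ an analogous computation works after differentiating $\lfloor s\rfloor$ times. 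The Kato--Ponce route is the cleanest, as it treats all $s>0$ at once.
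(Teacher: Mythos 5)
Your argument is correct and follows essentially the same route as the paper: localise with a smooth cut-off, obtain a uniform inhomogeneous $H^s$ bound on the truncated functions via the fractional Leibniz rule combined with the $L^{\frac{2(2qs+\alpha)}{2s+\alpha}}$ bound from Theorem~\ref{lem:gen}, and conclude by local compactness of $H^s$. The only difference is that the paper proves the final Rellich-type step explicitly (splitting $\|v_n-v\|_{L^2}^2$ into high and low frequencies via Plancherel and using dominated convergence), whereas you invoke it as a known theorem.
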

\begin{proof}
Multiplication by $\theta\in \mathcal S(\R^d)$ is a continuous mapping $\mathcal{E}^{s, \alpha, q} (\R^d)\rightarrow \dot{H}^s(\R^d)$.
Indeed by the fractional Leibniz rule, see e.g. \cite{Gulisashvili}*{Theorem 1.4}, we obtain

$$\Vert (-\Delta )^{\frac{s}{2}} \theta u \Vert _{L^2(\R^d)}\lesssim\Vert (-\Delta )^{\frac{s}{2}}  u \Vert _{L^2(\R^d)} \Vert  \theta  \Vert _{L^\infty(\R^d)}+ \Vert (-\Delta )^{\frac{s}{2}} \theta \Vert _{L^{r}(\R^d)}\Vert u \Vert _{L^{\frac{2 (2 q s+\alpha)}{2s+\alpha}}(\R^d)},$$with $r$ such that $\frac{2s+\alpha}{2 (2 q s+\alpha)}+\frac{1}{r}=\frac{1}{2}$. For $q=1, $ we set $r=\infty$. Hence by Theorem~\ref {lem:gen},
\begin{equation*}\label{contmapp}
\Vert \theta u \Vert _{\dot{H}^s(\R^d)} \leq C(\theta) \Vert u \Vert _{\mathcal{E}^{s, \alpha, q} (\R^d)}.
\end{equation*}
For every \(\rho > 0\),
we choose \(\theta \in C^\infty (\R^d)\) such that \(\theta = 1\) on \(B_\rho\) and \(\theta = 0\) in \(\R^d \setminus B_{2 \rho}\).
Let $(u_n)_{n\in\mathbb N}\) be a bounded sequence in $\mathcal{E}^{s, \alpha, q} (\R^d)$.
Setting $v_n=\theta u_n$, theorem~\ref {lem:gen}  implies that $(v_n)_{n\in\mathbb N}$ is also bounded in $H^{s} (\R^d)$. We can assume that $v_n$ converges weakly to some $v$ in $L^2(\R^d)$.
By testing against suitable test functions, it follows that $v$ is also supported in $B_{2 \rho}$ and thus $\Hat v\in L^\infty(\R^d)$.
By Plancharel's identity we have
\begin{equation*}
\Vert v_n-v\Vert _{L^{2}(\R^d)}^2=\int_{\vert \xi\vert \leq R} \vert \widehat v_n(\xi)-\widehat v(\xi)\vert^2\dif \xi+\int_{\vert \xi\vert > R}\vert \widehat v_n(\xi)-\widehat v(\xi)\vert^2\dif \xi.
\end{equation*}

By showing that the right hand side goes to zero we will infer by H\"older's inequality that $\Vert u_n-v\Vert _{L^{1}(B_\rho)}\rightarrow 0$. We have
$$\int_{\vert \xi\vert > R} \vert \widehat v_n(\xi)-\widehat v(\xi)\vert^2\dif \xi \leq \frac{1}{(1+R^2)^{s}}\int_{\R^d} \bigl(1+\vert \xi\vert^2\bigr)^{s}\vert \widehat v_n(\xi)-\widehat v(\xi)\vert^2\dif \xi\leq  \frac{C}{(1+R^2)^{s}}.$$
Since $e^{ix\cdot \xi}\in L^2_x(B_{2 \rho})$, by weak convergence in $L^2(B_{2 \rho})$ we have $\widehat v_n(\xi) \rightarrow \widehat v (\xi)$ almost everywhere. To conclude it suffices to show that
\begin{equation}\label{ts}
\int_{\vert \xi\vert \leq  R} \vert \widehat v_n(\xi)-\widehat v(\xi)\vert^2\dif \xi =o(1).
\end{equation}
Notice that $\Vert \widehat v_n \Vert _{\infty}\leq \Vert v_n\Vert _{L^1(B_{2 \rho})}\leq
\mu(B_{2 \rho})^{\frac 12} \Vert v_n\Vert _{L^2(B_{2 \rho})}\leq \mu(B_{2 \rho})^{\frac 12}\Vert v_n\Vert _{H^s(\R^d)}$ and hence  $\vert \widehat v_n(\xi)-\widehat v(\xi)\vert^2$ is estimated by a uniform constant so that by Lebesgue's dominated convergence theorem
\eqref{ts} holds. This concludes the proof.
\end{proof}

\begin{proof}[Proof of theorem~\ref{thm:radial comp} ]
We sketch the proof only in the most interesting case $\alpha>1$, $s<1/2$, and $q\geq \frac{2}{1-2s}$, namely when $p_{\mathrm{rad}}\leq q$. Notice that for all $R>0$, by \eqref{boundSupBall} and Lemma~\ref{local compactness}, interpolation between $q$ and $p'=1$ yields the compact embedding $\mathcal{E}^{s, \alpha, q}_{\mathrm{rad}}(\R^d) \hookrightarrow L^p_{loc}(\R^d)$ for all $1\leq p<q$. Thus it suffices to show that for any bounded sequence $(u_n)_{n\in\mathbb N}$ in $\mathcal{E}^{s, \alpha, q}_{\mathrm{rad}} (\R^d)$ it holds that $$\sup_{n\in \mathbb N} \int_{\R^d\setminus B_R(0)} \vert u_n\vert^p\rightarrow 0,\quad R\rightarrow \infty.$$
When $p\leq \frac{2}{1-2s}$, we use Lemma~\ref{cstep0} which yields
$$\int_{\R^d \setminus B_R(0)}\vert u_n\vert^p\leq o(1)\Vert u_n\Vert ^p_{\mathcal{E}^{s, \alpha, q}(\R^d)}, \qquad R\rightarrow \infty.$$
When $p>\frac{2}{1-2s}$ the same conclusion holds by arguing as in the proof of \eqref{e-ext-smalls} and using the strict inequality $p<p_{\textrm{rad}}$.
This is enough to prove the theorem for $\alpha>1$, $s<1/2$, and $q\geq \frac{2}{1-2s}$.

The other cases are similar, estimating the various integrals as in Proposition~\ref{prop-radial-ext} for $q<\frac{d+\alpha}{d-2s}$ and according to Table~\ref{table1} for $q>\frac{d+\alpha}{d-2s}$. This concludes the proof.
\end{proof}

\appendix
\section{{Proof of claim (\ref{eq:stimcoul}) }}\label{AA}

\begin{proof}[Proof of \eqref{eq:stimcoul}.]
We use an estimate for radial functions from \cite{MMVS}.
Similar estimates were previously obtained in \citelist{\cite{Rubin-1982}\cite{Duoandikoetxea13}\cite{Thim2016}}.

\begin{lem}[\cite{MMVS}*{Lemma 6.3}]
Let $d \geq 2$ and $\alpha\in (0,d)$,  then for every measurable function $f:[0,\infty) \rightarrow [0,\infty)$
$$\iint_{\R^d\times \R^d}
\frac{f(\abs{x}) f(\vert y\vert )}{\abs{x - y}^{d-\alpha}} \dif x \dif y=\int_0^{\infty}\int_0^{\infty}f( r )  K_{\alpha,d}^R(r,s)f(s)r^{d-1}s^{d-1}\dif r \dif s$$
where the kernel $K_{\alpha,d}^R: [0,\infty) \times [0,\infty) \rightarrow \infty$ is defined for $r,s \in  [0,\infty) \times [0,\infty)$ by
$$K_{\alpha,d}^R(r,s)=C_d \int_0^1\frac{z^{\frac{d-3}{2}}(1-z)^{\frac{d-3}{2}}}{((s+r)^2-4srz)^{\frac{d-\alpha}{2}}} \dif z. $$
Moreover, there exists $M>0$ such that
\begin{equation}\label{AS}
K_{\alpha,d}^R(r,s)\leq M
\left\{\begin{array}{ll}
(\frac{1}{rs})^{\frac{d-1}{2}}  \frac{1}{\vert r-s\vert^{1-\alpha}} & \text{ if } \alpha<1, \smallskip\\
(\frac{1}{rs})^{\frac{d-1}{2}}  \ln \frac{2\vert r+s\vert }{\vert r-s\vert } &\text{ if } \alpha=1, \smallskip  \\
(\frac{1}{rs})^{\frac{d-\alpha}{2}}  &\text{ if } \alpha>1.
\end{array}\right.
\end{equation}
\end{lem}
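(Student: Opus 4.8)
The plan is to obtain the integral identity by passing to polar coordinates, and then to reduce the three pointwise kernel bounds to a single one--variable estimate. For the identity, write $x=r\omega$ and $y=s\sigma$ with $r,s>0$ and $\omega,\sigma\in S^{d-1}$, so that $\abs{x-y}^2=r^2+s^2-2rs\,\omega\cdot\sigma$ and
\[
\iint_{\R^d\times\R^d}\frac{f(\abs{x})f(\abs{y})}{\abs{x-y}^{d-\alpha}}\dif x\dif y
=\int_0^\infty\!\!\int_0^\infty f(r)f(s)\,r^{d-1}s^{d-1}\Biggl(\,\int_{S^{d-1}}\!\!\int_{S^{d-1}}\frac{\dif\omega\,\dif\sigma}{\bigl(r^2+s^2-2rs\,\omega\cdot\sigma\bigr)^{\frac{d-\alpha}{2}}}\Biggr)\dif r\,\dif s .
\]
By rotational invariance the inner double integral equals $\abs{S^{d-1}}\int_{S^{d-1}}(r^2+s^2-2rs\,\omega\cdot e)^{-\frac{d-\alpha}{2}}\dif\omega$ for any fixed $e\in S^{d-1}$, and the slicing formula $\int_{S^{d-1}}g(\omega\cdot e)\dif\omega=\abs{S^{d-2}}\int_{-1}^1 g(t)(1-t^2)^{\frac{d-3}{2}}\dif t$ (valid for $d\ge 2$, with the obvious interpretation when $d=2$) turns it into $\abs{S^{d-1}}\abs{S^{d-2}}\int_{-1}^1(1-t^2)^{\frac{d-3}{2}}(r^2+s^2-2rst)^{-\frac{d-\alpha}{2}}\dif t$. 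The substitution $t=2z-1$, under which $1-t^2=4z(1-z)$ and $r^2+s^2-2rst=(r+s)^2-4rsz$, gives the claimed identity with $C_d=2^{d-2}\abs{S^{d-1}}\abs{S^{d-2}}$.

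For the kernel bounds one may assume $r,s>0$ (the value $s=0$ is irrelevant, being killed by the factor $s^{d-1}$ in the identity). Factoring $rs$ out of $(r+s)^2-4rsz=rs(\mu^2-4z)$, where $\mu:=\sqrt{r/s}+\sqrt{s/r}\ge 2$, we get
\[
K_{\alpha,d}^R(r,s)=C_d\,(rs)^{-\frac{d-\alpha}{2}}\,\Phi(\mu),\qquad \Phi(\mu):=\int_0^1\frac{\bigl(z(1-z)\bigr)^{\frac{d-3}{2}}}{(\mu^2-4z)^{\frac{d-\alpha}{2}}}\dif z .
\]
Since $(r-s)^2=rs(\mu^2-4)$ and $\tfrac{r+s}{\abs{r-s}}=\tfrac{\mu}{\sqrt{\mu^2-4}}$, the three inequalities in \eqref{AS} are respectively equivalent to the single assertions that, uniformly for $\mu\in(2,\infty)$,
\[
\Phi(\mu)\lesssim 1\ \text{ if }\alpha>1,\qquad \Phi(\mu)\lesssim\log\tfrac{2\mu}{\sqrt{\mu^2-4}}\ \text{ if }\alpha=1,\qquad \Phi(\mu)\lesssim(\mu^2-4)^{\frac{\alpha-1}{2}}\ \text{ if }\alpha<1 .
\]

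To prove these, note first that for $\mu$ bounded away from $2$ we have $\mu^2-4z\ge\mu^2-4>0$, so $\Phi(\mu)\lesssim\int_0^1(z(1-z))^{\frac{d-3}{2}}\dif z<\infty$, while for large $\mu$ one has $\Phi(\mu)\lesssim\mu^{-(d-\alpha)}$, which (using $d\ge 2$) is dominated by each right--hand side; hence only $\mu=2+\eps$ with $\eps\downarrow 0$ is delicate. Putting $v=1-z$ and using $\mu^2-4z=4v+\eps(4+\eps)\simeq v+\eps$, this reduces to estimating $\int_0^{1/2}v^{\frac{d-3}{2}}(v+\eps)^{-\frac{d-\alpha}{2}}\dif v$ (the factor $(1-v)^{\frac{d-3}{2}}$ being bounded and integrable near $v=0$). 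Splitting this integral at $v=\eps$ and bounding $v+\eps\ge\eps$ on $(0,\eps)$ and $v+\eps\ge v$ on $(\eps,1/2)$, one obtains $O(1)$ when $\alpha>1$, $O\!\bigl(1+\log\tfrac1\eps\bigr)$ when $\alpha=1$, and $O\!\bigl(\eps^{\frac{\alpha-1}{2}}\bigr)$ when $\alpha<1$; since $\log\tfrac{2\mu}{\sqrt{\mu^2-4}}\simeq\log\tfrac1\eps$ and $(\mu^2-4)^{\frac{\alpha-1}{2}}\simeq\eps^{\frac{\alpha-1}{2}}$ as $\eps\downarrow 0$, these are exactly the required bounds.

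The identity is routine; the real content is the $\alpha\le 1$ part of the one--variable estimate, where $\Phi$ genuinely diverges as $\mu\to 2^+$, i.e. as $r\to s$, and one must extract the \emph{sharp} rate --- logarithmic when $\alpha=1$, the power $(\mu^2-4)^{(\alpha-1)/2}$ when $\alpha<1$ --- rather than mere finiteness; the dyadic split at $v=\eps$ is what delivers this. Minor additional care is needed at $d=2$, where $(1-z)^{\frac{d-3}{2}}=(1-z)^{-1/2}$ is itself singular (but still integrable), and at $r=s$, where both sides of the $\alpha\le 1$ inequalities are $+\infty$ and there is nothing to prove.
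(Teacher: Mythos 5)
Your proof is correct. Note that the paper itself does not prove this lemma at all: it is imported verbatim from \cite{MMVS}*{Lemma 6.3}, so you have supplied a complete argument where the paper only gives a citation. Your derivation of the identity is the standard one (polar coordinates, rotational invariance, the slicing formula $\int_{S^{d-1}}g(\omega\cdot e)\dif\omega=\abs{S^{d-2}}\int_{-1}^1 g(t)(1-t^2)^{\frac{d-3}{2}}\dif t$, and the substitution $t=2z-1$), and the form of $K^R_{\alpha,d}$ makes clear this is essentially the only route; the constant $C_d=2^{d-2}\abs{S^{d-1}}\abs{S^{d-2}}$ checks out. The reduction of the three kernel bounds to a single one-variable estimate via $\mu=\sqrt{r/s}+\sqrt{s/r}$ is clean and correct: I verified that $(r-s)^2=rs(\mu^2-4)$ and $\frac{r+s}{\abs{r-s}}=\frac{\mu}{\sqrt{\mu^2-4}}$ do convert \eqref{AS} into exactly the three asserted bounds on $\Phi(\mu)$, and your split of $\int_0^{1/2}v^{\frac{d-3}{2}}(v+\eps)^{-\frac{d-\alpha}{2}}\dif v$ at $v=\eps$ gives $\frac{2}{d-1}\eps^{\frac{\alpha-1}{2}}$ on $(0,\eps)$ and $O(1)$, $O(\log\frac1\eps)$, or $O(\eps^{\frac{\alpha-1}{2}})$ on $(\eps,1/2)$ according as $\alpha>1$, $=1$, $<1$, which matches the right-hand sides since $\log\frac{2\mu}{\sqrt{\mu^2-4}}\simeq\frac12\log\frac1\eps$ and is bounded below by $\log 2$. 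The two boundary cases you flag ($d=2$, where $(1-z)^{-1/2}$ is singular but integrable and is anyway absorbed into the $v$-near-$0$ analysis, and $r=s$ for $\alpha\le1$, where both sides are infinite) are handled correctly; for $\alpha>1$ the bound at $\mu=2$ follows from the uniform bound on $(2,\infty)$ by monotonicity of $\Phi$.
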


\medskip\noindent
\emph{Case $\alpha>1$.}
From \eqref{AS} we obtain  for radially symmetric functions that
$$\iint_{\R^d\times \R^d}
\frac{\abs{\varphi (x)}^q\,\abs{\varphi (y)}^q}{\abs{x - y}^{d-\alpha}} \dif x \dif y\leq C \int_0^{\infty}\int_0^{\infty}\
\frac{\vert \varphi( r ) \vert^q \, \vert \varphi( s ) \, \vert^q r^{d-1}s^{d-1}}{(r s)^{\frac{d-\alpha}{2}}} \dif r \dif s,$$
and hence that
$$\iint_{\R^d\times \R^d}
\frac{\abs{\varphi (x)}^q\,\abs{\varphi (y)}^q}{\abs{x - y}^{d-\alpha}} \dif x \dif y\leq C  \left(\int_0^{\infty}
\vert \varphi( r ) \vert^q \, r^{\frac{d}{2}+\frac{\alpha}{2}-1} \dif r \right)^{2}.$$
Let $u=u_{\lambda,R,S}$ be defined in \eqref{eq:u}. Then
$$\iint_{\R^d\times \R^d}
\frac{\abs{u (x)}^q\,  \abs{u (y)}^q}{\abs{x - y}^{d-\alpha}} \dif x \dif y\leq C  \lambda^{2q} \left(\int_{R-S}^{R+S}
\left(\frac{ S-\big\vert  r-R\big\vert }{S}\right)^q r^{\frac{d}{2}+\frac{\alpha}{2}-1} \dif r \right)^{2}.$$
Using the trivial estimate $\frac{ S-\big\vert  r-R\big\vert }{S}<1$ it follows that
$$\iint_{\R^d\times \R^d}
\frac{\abs{u (x)}^q\, \abs{u (y)}^q}{\abs{x - y}^{d-\alpha}} \dif x \dif y\leq C  \lambda^{2q} \left((R+S)^{\frac{d}{2}+\frac{\alpha}{2}}-(R-S)^{\frac{d}{2}+\frac{\alpha}{2}}\right)^2$$
and we get the desired estimate.

\medskip\noindent
\emph{Case $\alpha=1$.}
From \eqref{AS} we obtain  for radially symmetric functions that
$$\iint_{\R^d\times \R^d}
\frac{\abs{\varphi (x)}^q\,\abs{\varphi (y)}^q}{\abs{x - y}^{d-\alpha}} \dif x \dif y\leq C \int_0^{\infty}\int_0^{\infty}\
\frac{\vert \varphi( r ) \vert^q \vert \varphi( s ) \vert^q r^{d-1}s^{d-1}}{(r s)^{\frac{d-1}{2}}} \ln \frac{2\vert r+s\vert }{\vert r-s\vert }\dif r \dif s,$$
and hence that
$$\iint_{\R^d\times \R^d}
\frac{\abs{\varphi (x)}^q\,\abs{\varphi (y)}^q}{\abs{x - y}^{d-\alpha}} \dif x \dif y\leq C  \int_0^{\infty}\int_0^{\infty}
\vert \varphi( r ) \vert^q \vert \varphi( s ) \vert^q r^{\frac{d}{2}-\frac{1}{2}}s^{\frac{d}{2}-\frac{1}{2}} \ln \frac{2\vert r+s\vert }{\vert r-s\vert }\dif r \dif s.$$
Let $u=u_{\lambda,R,S}$ be defined in \eqref{eq:u}.
Using the  estimates $\frac{ S-\big\vert  r-R\big\vert }{S}<1$ and $r\leq R+S$, $s\leq R+S$ we have
$$\iint_{\R^d\times \R^d}
\frac{\abs{u (x)}^q\,\abs{u (y)}^q}{\abs{x - y}^{d-\alpha}} \dif x \dif y\leq C \lambda^{2q} (R+S)^{d-1}\int_{R-S}^{R+S}\int_{R-S}^{R+S} \ln \frac{2\vert r+s\vert }{\vert r-s\vert }\dif r \dif s$$
and we can conclude that
$$\iint_{\R^d\times \R^d}
\frac{\abs{u (x)}^q\,\abs{u (y)}^q}{\abs{x - y}^{d-\alpha}} \dif x \dif y\leq C \lambda^{2q} R^{d-1}\int_{R-S}^{R+S}\int_{R-S}^{R+S} \ln \frac{2\vert r+s\vert }{\vert r-s\vert }\dif r \dif s$$
i.e.
$$\iint_{\R^d\times \R^d}
\frac{\abs{u (x)}^q\,\abs{u (y)}^q}{\abs{x - y}^{d-\alpha}} \dif x \dif y\leq C \lambda^{2q} R^{d-1}S^2(\ln R-\ln S+1)=O(\lambda^{2q}R^{d-1+\beta}S^2).$$

\medskip\noindent
\emph{Case $0<\alpha<1$.}
This case is similar to $\alpha=1$, we omit the details.
\end{proof}

\section*{Acknowledgements}

J.~Bellazzini and M.~Ghimenti were supported by GNAMPA 2016 project ``Equazioni non lineari dispersive''.
M.~Ghimenti was partially supported by P.R.A. 2016, University of Pisa.
J.~Van Schaftingen was supported by the Projet de Recherche (Fonds de la Recherche Scientifique--FNRS) T.1110.14 ``Existence and asymptotic behavior of solutions to systems of semilinear elliptic partial differential equations''.

\begin{bibdiv}

\begin{biblist}

\bib{BCD}{book}{
   author={Bahouri, Hajer},
   author={Chemin, Jean-Yves},
   author={Danchin, Rapha{\"e}l},
   title={Fourier analysis and nonlinear partial differential equations},
   series={Grundlehren der Mathematischen Wissenschaften},
   volume={343},
   publisher={Springer},
   address={Heidelberg},
   date={2011},
   pages={xvi+523},
}

\bib{BFV}{article}{
 title={Maximizers for Gagliardo--Nirenberg inequalities
and related non-local problems},
 author={Bellazzini, Jacopo},
 author={Frank, Rupert L.},
 author={Visciglia, Nicola},
 journal={Math. Ann.},
 year={2014},
 volume={360},
 number={3--4},
 pages={653--673},
}

\bib{BGO}{article}{
  title={Sharp lower bounds for Coulomb energy},
  author={Bellazzini, Jacopo},
  author={Ghimenti, Marco},
  author={Ozawa, Tohru},
  journal={Math. Res. Lett.},
  year={2016},
  volume={23},
  number={3},
  pages={621--632},
}

\bib{BOV}{article}{
  title={Ground states for semi-relativistic Schr\"odinger-Poisson-Slater energies},
  author={Bellazzini, Jacopo},
  author={Ozawa, Tohru},
  author={Visciglia, Nicola},
  journal={To appear in: Funkcial. Ekvac.},
  eprint={arXiv:1103.2649v2},
}

\bib{BenguriaBrezisLieb}{article}{
   author={Benguria, Rafael},
   author={Br{\'e}zis, Ha{\"{\i}}m},
   author={Lieb, Elliott H.},
   title={The Thomas-Fermi-von Weizs\"acker theory of atoms and molecules},
   journal={Comm. Math. Phys.},
   volume={79},
   date={1981},
   number={2},
   pages={167--180},
   issn={0010-3616},
}

\bib{BGT-2012}{article}{
   author={Benguria, Rafael D.},
   author={Gallegos, Pablo},
   author={Tu\v sek, Mat\v ej},
   title={A new estimate on the two-dimensional indirect Coulomb energy},
   journal={Ann. Henri Poincar\'e},
   volume={13},
   date={2012},
   number={8},
   pages={1733--1744},
   issn={1424-0637},
}

\bib{BLS-2008}{article}{
   author={Benguria, Rafael D.},
   author={Loss, Michael},
   author={Siedentop, Heinz},
   title={Stability of atoms and molecules in an ultrarelativistic
   Thomas-Fermi-Weizs\"acker model},
   journal={J. Math. Phys.},
   volume={49},
   date={2008},
   number={1},
   pages={012302, 7},
   issn={0022-2488},
}

\bib{BPO-2002}{article}{
   author={Benguria, R. D.},
   author={P\'erez-Oyarz\'un, S.},
   title={The ultrarelativistic Thomas-Fermi-von Weizs\"acker model},
   journal={J. Phys. A},
   volume={35},
   date={2002},
   number={15},
   pages={3409--3414},
   issn={0305-4470},
}


\bib{Catto-2013}{article}{
   author={Catto, I.},
   author={Dolbeault, J.},
   author={S{\'a}nchez, O.},
   author={Soler, J.},
   title={Existence of steady states for the Maxwell-Schr\"odinger--Poisson
   system: exploring the applicability of the concentration-compactness
   principle},
   journal={Math. Models Methods Appl. Sci.},
   volume={23},
   date={2013},
   number={10},
   pages={1915--1938},
   issn={0218-2025},
}

\bib{Brezis-Lieb-1983}{article}{
   author={Br{\'e}zis, Ha{\"{\i}}m},
   author={Lieb, Elliott},
   title={A relation between pointwise convergence of functions and
   convergence of functionals},
   journal={Proc. Amer. Math. Soc.},
   volume={88},
   date={1983},
   number={3},
   pages={486--490},
   issn={0002-9939},
}

\bib{CO}{article}{
   author={Cho, Yonggeun},
   author={Ozawa, Tohru},
   title={Sobolev inequalities with symmetry},
   journal={Commun. Contemp. Math.},
   volume={11},
   date={2009},
   number={3},
   pages={355--365},
   issn={0219-1997},
}

\bib{DeNapoli-symmetry}{article}{
  title={Symmetry breaking for an elliptic equation involving the fractional Laplacian},
  author={De Napoli, Pablo L.},
  eprint={arXiv:1409.7421},
}

\bib{DeNapoli-2014}{article}{
  title={Elementary proofs of embedding theorems for potential spaces of radial functions},
  author={De Napoli, Pablo L.},
  author={Drelichman, Irene},
  book={
      title={Methods of Fourier Analysis and Approximation Theory},
      editor={M. Ruzhansky},
      editor={S. Tikhonov},
      publisher={Birkh\"auser, Basel},
   },
   date={2016},
   pages={115--138},
}

\bib{DeNapoli-2011}{article}{
   author={De N{\'a}poli, Pablo L.},
   author={Drelichman, Irene},
   author={Dur{\'a}n, Ricardo G.},
   title={On weighted inequalities for fractional integrals of radial
   functions},
   journal={Illinois J. Math.},
   volume={55},
   date={2011},
   number={2},
   pages={575--587 (2012)},
   issn={0019-2082},
}

\bib{Duoandikoetxea13}{article}{
   author={Duoandikoetxea, Javier},
   title={Fractional integrals on radial functions with applications to
   weighted inequalities},
   journal={Ann. Mat. Pura Appl. (4)},
   volume={192},
   date={2013},
   number={4},
   pages={553--568},
   issn={0373-3114},
}

\bib{FrolichLiebLoss}{article}{
   author={Fr\"ohlich, J.},
   author={Lieb, E. H.},
   author={Loss, M.},
   title={Stability of Coulomb systems with magnetic fields. I. The one-electron atom.},
   journal={Comm. Math. Phys.},
   volume={104},
   date={1986},
   number={2},
   pages={251--270},

}

\bib{Gatto-2002}{article}{
   author={Gatto, A. Eduardo},
   title={Product rule and chain rule estimates for fractional derivatives
   on spaces that satisfy the doubling condition},
   journal={J. Funct. Anal.},
   volume={188},
   date={2002},
   number={1},
   pages={27--37},
   issn={0022-1236},
}

\bib{Gulisashvili}{article}{
   author={Gulisashvili, Archil},
    author={Kon, Mark A.},
       title={Exact smoothing properties of Schrödinger semigroups. },
   journal={Amer. J. Math. },
   volume={118},
   date={1996},
   number={6},
   pages={1215--1248},

}

\bib{LeBris-Lions-2005}{article}{
   author={Le Bris, Claude},
   author={Lions, Pierre-Louis},
   title={From atoms to crystals: a mathematical journey},
   journal={Bull. Amer. Math. Soc. (N.S.)},
   volume={42},
   date={2005},
   number={3},
   pages={291--363},
   issn={0273-0979},
}

\bib{Ledoux-2003}{article}{
   author={Ledoux, M.},
   title={On improved Sobolev embedding theorems},
   journal={Math. Res. Lett.},
   volume={10},
   date={2003},
   number={5-6},
   pages={659--669},
   issn={1073-2780},
}

\bib{Lieb81}{article}{
   author={Lieb, Elliott H.},
   title={Thomas-Fermi and related theories of atoms and molecules},
   journal={Rev. Modern Phys.},
   volume={53},
   date={1981},
   number={4},
   pages={603--641},
   issn={0034-6861},
}

\bib{Lieb-Yau}{article}{
   author={Lieb, Elliott H.},
   author={Yau, Horng-Tzer},
   title={The stability and instability of relativistic matter},
   journal={Comm. Math. Phys.},
   volume={118},
   date={1988},
   number={2},
   pages={177--213},
   issn={0010-3616},
   review={\MR{956165}},
}

\bib{LiebSeiringer2010}{book}{
   author={Lieb, Elliott H.},
   author={Seiringer, Robert},
   title={The stability of matter in quantum mechanics},
   publisher={Cambridge University Press, Cambridge},
   date={2010},
   pages={xvi+293},
   isbn={978-0-521-19118-0},
}

\bib{Lions-1981}{article}{
   author={Lions, P.-L.},
   title={Some remarks on Hartree equation},
   journal={Nonlinear Anal.},
   volume={5},
   date={1981},
   number={11},
   pages={1245--1256},
   issn={0362-546X},
}

\bib{Lions1987}{article}{
   author={Lions, P.-L.},
   title={Solutions of Hartree--Fock equations for Coulomb systems},
   journal={Comm. Math. Phys.},
   volume={109},
   date={1987},
   number={1},
   pages={33--97},
   issn={0010-3616},
}

\bib{LMM-2015}{article}{
   author={Lu, Jianfeng},
   author={Moroz, Vitaly},
   author={Muratov, Cyrill B.},
   title={Orbital-free density functional theory of out-of-plane charge
   screening in graphene},
   journal={J. Nonlinear Sci.},
   volume={25},
   date={2015},
   number={6},
   pages={1391--1430},
   issn={0938-8974},
}

\bib{Lundholm-2016}{article}{
   author={Lundholm, Douglas},
   author={Nam, Phan Th{\`a}nh},
   author={Portmann, Fabian},
   title={Fractional Hardy-Lieb-Thirring and related inequalities for
   interacting systems},
   journal={Arch. Ration. Mech. Anal.},
   volume={219},
   date={2016},
   number={3},
   pages={1343--1382},
   issn={0003-9527},
}

\bib{MMVS}{article}{
   author={Mercuri, Carlo},
   author={Moroz, Vitaly},
   author={Van Schaftingen, Jean},
   title={Groundstates and radial solutions to nonlinear
   Schr\"odinger-Poisson-Slater equations at the critical frequency},
   journal={Calc. Var. Partial Differential Equations},
   volume={55},
   date={2016},
   number={6},
   pages={Paper No. 146, 58},
   issn={0944-2669},
}

\bib{PP}{article}{
   author={Palatucci, Giampiero},
   author={Pisante, Adriano},
   title={Improved Sobolev embeddings, profile decomposition, and
   concentration-compactness for fractional Sobolev spaces},
   journal={Calc. Var. Partial Differential Equations},
   volume={50},
   date={2014},
   number={3-4},
   pages={799--829},
   issn={0944-2669},
}

\bib{Rubin-1982}{article}{
   author={Rubin, B. S.},
   title={One-dimensional representation, inversion and certain properties
   of Riesz potentials of radial functions},
   language={Russian},
   journal={Mat. Zametki},
   volume={34},
   date={1983},
   number={4},
   pages={521--533},
}

\bib{Ruiz-ARMA}{article}{
   author={Ruiz, David},
   title={On the Schr\"odinger--Poisson--Slater system: behavior of
   minimizers, radial and nonradial cases},
   journal={Arch. Ration. Mech. Anal.},
   volume={198},
   date={2010},
   number={1},
   pages={349--368},
   issn={0003-9527},
}

\bib{SteinWeiss1958}{article}{
   author={Stein, E. M.},
   author={Weiss, Guido},
   title={Fractional integrals on \(n\)-dimensional Euclidean space},
   journal={J. Math. Mech.},
   volume={7},
   date={1958},
   pages={503--514},
}

\bib{Thim2016}{article}{
   author={Thim, Johan},
   title={Asymptotics and inversion of Riesz potentials through
   decomposition in radial and spherical parts},
   journal={Ann. Mat. Pura Appl. (4)},
   volume={195},
   date={2016},
   number={2},
   pages={323--341},
   issn={0373-3114},
}

\bib{VanSchaftingen2014}{article}{
   author={Van Schaftingen, Jean},
   title={Interpolation inequalities between Sobolev and Morrey-Campanato
   spaces: a common gateway to concentration-compactness and
   Gagliardo-Nirenberg interpolation inequalities},
   journal={Port. Math.},
   volume={71},
   date={2014},
   number={3--4},
   pages={159--175},
   issn={0032-5155},
}

\end{biblist}

\end{bibdiv}


\end{document}